\title{Burning game}
\author[1]{Nina Chiarelli}
\author[2,4]{Vesna Iršič}
\author[3,4]{Marko Jakovac}
\author[5]{William B. Kinnersley}
\author[6]{Mirjana Mikala\v{c}ki}
\affil[1]{FAMNIT and IAM, University of Primorska, Slovenia}
\affil[2]{Faculty of Mathematics and Physics, University Ljubljana, Slovenia}
\affil[3]{Faculty of Natural Sciences and Mathematics, University of Maribor, Slovenia}
\affil[4]{Institute of Mathematics, Physics and Mechanics, Ljubljana, Slovenia}
\affil[5]{Department of Mathematics and Applied Mathematical Sciences, University of Rhode Island, USA}
\affil[6]{Department of Mathematics and Informatics, Faculty of Sciences, University of Novi Sad, Serbia}
\date{\today}
\newtheorem{theorem}{Theorem}
\newtheorem{corollary}[theorem]{Corollary}
\newtheorem{lemma}[theorem]{Lemma}
\newtheorem{proposition}[theorem]{Proposition}
\theoremstyle{definition}
\newtheorem{example}[theorem]{Example}
\newcommand{\bg}{b_{\rm g}}
\newcommand{\ceil}[1]{\left\lceil #1 \right\rceil}
\newcommand{\floor}[1]{\left\lfloor #1 \right\rfloor}
\newcommand{\size}[1]{\left \vert #1 \right \vert}
\newcommand{\dist}{\textrm{dist}}
\newcommand{\cart}{\, \Box \,}
\newcommand{\fastplayer}{Burner\xspace}
\newcommand{\slowplayer}{Staller\xspace}
\DeclareMathOperator{\rad}{rad}
\DeclareMathOperator{\diam}{diam}
\DeclareMathOperator{\CL}{CL}
\begin{document}

\maketitle

\begin{abstract}
    Motivated by the burning and cooling processes, the burning game is introduced. The game is played on a graph $G$ by the two players (\fastplayer and \slowplayer) that take turns selecting vertices of $G$ to burn; as in the burning process, burning vertices spread fire to unburned neighbors.  \fastplayer aims to burn all vertices of $G$ as quickly as possible, while \slowplayer wants the process to last as long as possible.  If both players play optimally, then the number of time steps needed to burn the whole graph $G$ is the game burning number $\bg(G)$ if \fastplayer makes the first move, and the \slowplayer-start game burning number $\bg'(G)$ if \slowplayer starts.

    In this paper, basic bounds on $\bg(G)$ are given and Continuation Principle is established. Graphs with small game burning numbers are characterized and Nordhaus-Gaddum type results are obtained. An analogue of the burning number conjecture for the burning game is considered and graph products are studied.
\end{abstract}

\section{Introduction}

The processes of virus propagation in medicine or in computer networks, or the spread of the trends over social networks, are just some examples of various natural and engineered phenomena spread over networks, that are active research topics (see, e.g.~\cite{banerjee2014epidemic, kephart1992directed, kramer2014experimental, rogers2014diffusion}). In all the aforementioned problems, the natural question  is how quickly the contagion can spread over all the members in the network. 

The \textit{burning process} on graphs was introduced in~\cite{bonato2016burn} as a simplified deterministic model to analyze this question, inspired by the processes of firefighting~\cite{barghi2015firefighting,finbow2009firefighter}, graph cleaning~\cite{alon2009cleaning} and graph bootstrap percolation~\cite{balogh2012graph}. (Unknown to these authors, a similar process was actually introduced much earlier in the paper of Alon~\cite{alon1992transmitting}.) Later on, the \textit{cooling process} was introduced in~\cite{bonato2024cool} as a dual of the burning process, modeling the mitigation of infection spread and virus propagation. 

The burning and cooling processes are defined as follows (merged together from ~\cite{bonato2016burn, bonato2024cool}). Given a finite, simple, undirected graph $G$, the burning process on $G$ is a discrete-time process. Vertices may be either unburned or burned throughout the process. Initially, in round $t = 0$ all vertices are unburned. At each round $t \geq 1$, one new unburned vertex is chosen to burn, if such a vertex is available. Such a chosen vertex is called a \textit{source}. If a vertex is burned, then it remains in that state until the end of the process. Once a vertex is burned in round $t$, in round $t + 1$ each of its unburned neighbors becomes burned. The process ends in a given round when all vertices of $G$ are burned. The \textit{burning number} of a graph $G$, denoted by $b(G)$, is the minimum number of rounds needed for the process to end. Analogously, the \textit{cooling number} of $G$, $CL(G)$, is defined to be the maximum number of rounds for the cooling process to end. The sequence of sources chosen in an instance of the burning process (respectively, cooling process) is referred to as a \textit{burning sequence} (resp. \textit{cooling sequence}.  The length of the shortest burning sequence is $b(G)$, and for every graph $G$ it holds that $b(G)\le CL(G)$. Note that in the burning process, the selection of a new source and the spread of fire to neighboring vertices happen simultaneously. Thus, there is a source selected in every round, even if the selected vertex would have burned in the same round anyway. 

Although it was only recently introduced, graph burning has stimulated a great deal of research.  Much of this research has focused on resolving the so-called \textit{burning number conjecture} posed by Bonato et al. in \cite{bonato2016burn}, which asserts that every $n$-vertex connected graph $G$ satisfies $b(G) \le \ceil{\sqrt{n}}$.  Upper bounds on $b(G)$ have been gradually improved over time by several authors (see e.g. \cite{BBBCKPR23}, \cite{BBJRR18}, \cite{LL16}, and \cite{NT24}). It is known that if the burning number conjecture holds for trees, then it holds for all connected graphs; thus, several papers have focused on determining or bounding $b(G)$ for various classes of trees, such as spiders (\cite{BL19}, \cite{DDSSS18}) and caterpillars (\cite{HTK20}, \cite{LHH20}).  Several authors have also investigated the computational complexity of determining the burning number of a graph; this problem was shown to be NP-complete by Bessy et al. in \cite{bessy2017burning}, although polynomial-time approximation algorithms are known for several classes of graphs (see \cite{bessy2017burning}, \cite{BK19}, \cite{BL19}).  For more details on previous work in the area, see the recent survey \cite{bonato2021burning}.

Motivated by the two aforementioned processes of burning and cooling, we introduce a new graph game - the \emph{burning game}.  In the burning game on a graph $G$, the two players \fastplayer (he/him) and \slowplayer (she/her) take turns selecting vertices of $G$ to burn; as in the burning process, burning vertices spread fire to unburned neighbors.  Burner aims to burn all vertices of $G$ as quickly as possible, while Staller wants the process to last as long as possible.  The burning game is similar in spirit to several other competitive games based on graph parameters, e.g.~domination games \cite{bresar2021book, portier2022proof34conjecturetotal}, the coloring game \cite{bodlaender1991coloring, zuev2015coloring}, the competition-independence game \cite{phillips2001independence, goddard2018independence} and saturation games~\cite{hefetz2016saturation, keusch2016colorability}. 

Formally, the game is defined as follows. Let $G$ be a finite simple graph. Vertices are burned or unburned, but once burned they stay in this state until the end of the game. At time step $t=0$, all vertices are unburned. In each time step $t \geq 1$, first all neighbors of burned vertices become burned (\emph{the spreading phase}), and then one of the players burns one unburned vertex in this time step as well (\emph{the selection phase}). The game ends in the first time step $t$ in which all vertices of $G$ are burned. The aim of \fastplayer is to minimize the total number of time steps and the aim of \slowplayer is to maximize it. If both players play optimally, then the number of time steps needed to burn the whole graph $G$ is called the \emph{game burning number} $\bg(G)$ if \fastplayer makes the first move, and the \emph{\slowplayer-start game burning number} $\bg'(G)$ if \slowplayer starts.

One time step represents \emph{one round} in the burning game, whose first part is the spreading phase, and the second part is the selection phase. The first round in the game consists only of the selection phase. However, it is possible that the last round ends after only the spreading phase, if there are no unburned vertices left in the graph. 

The burning number of a graph, $b(G)$, 
can equivalently be viewed as the length of the burning game in which \fastplayer is the only player, while the cooling number $\CL(G)$ 
can equivalently be seen as the length of the burning game where \slowplayer is the only player. Just note that due to our specification of a round into two phases, the burning game with only \fastplayer playing can slightly differ from the burning processes (in the last round).

\subsection{Organization of the paper} The rest of the paper is organized as follows. In the following subsection, we state some notation that will be used throughout the rest of the paper. In Section~\ref{sec:basic}, we state some basic bounds on $\bg(G)$, establish the Continuation Principle, and give structural characterizations of graphs with small game burning numbers. In Section~\ref{sec:nordhaus-gaddum}, we give Nordhaus-Gaddum type results for the game burning number, while in Section~\ref{sec:upper-bound}, we consider an analogue of the burning number conjecture for the burning game. In Section~\ref{sec:products}, we consider various types of graph products (Cartesian, strong, lexicographic and corona), and obtain bounds for the game burning number for these graphs.
 
\subsection{Notation} We use standard graph theory notation throughout. 
For a given graph $G$, we denote its vertex set and edge set by $V(G)$ and $E(G)$, respectively, and we set $v(G) = |V(G)|$ and $e(G) = |E(G)|$.  An edge joining two vertices $x$ and $y$ is denoted by $xy$.  The complement of the graph is denoted by $\overline{G}$.
Given graphs $H, F$, we write $H\subseteq F$ to denote that $H$ is contained in $F$, meaning that $V(H)\subseteq V(F)$ and $E(H)\subseteq E(F)$. 
Given any set $S\subseteq V(G)$, we denote the (joint) \textit{external neighborhood} of $S$ by $N_G(S)=\{u \in V(G)\setminus S : ux \in E(G), x\in S\}$; when the graph $G$ is clear from context, we simply write $N(S)$. The \textit{closed neighborhood} of $S$ in $G$ is $N_G(S) \cup S$, and is denoted by $N_G[S]$ (or just $N[S]$).  When $S$ consists of only one vertex, $x$, we write $N_G(x)$ (or $N(x)$) for the external neighborhood and $N_G[x]$ (or $N[x]$) for the closed neighborhood. We let $d_G(x) = |N_G(x)|$ denote the degree of vertex $x$ in graph $G$; once again, when $G$ is clear from context, we simply write $d(x)$. The maximum and minimum vertex degree in a given graph $G$ are denoted by $\Delta(G)$ and $\delta(G)$, respectively. 
For a vertex $v$ and a non-negative integer $k$, the \textit{$k$th closed neighborhood} $N_k[v]$ of $v$ is defined as the set of all vertices within distance $k$ of $v$, including $v$ itself. 
The distance between vertices $u$ and $v$ is denoted by $d(u, v)$. If $G$ is a graph and $u$ is a vertex of $G$, then the \textit{eccentricity} of $u$ is defined as $ecc(v)= max\{d(u,v): v\in V(G)\}$. The \textit{radius} and \textit{diameter} of $G$ are defined as the minimum and maximum eccentricities, respectively, over all vertices in $G$.

The \emph{Cartesian product} $G\, \Box \, H$ of graphs $G=(V(G),E(G))$ and $H=(V(H),$ $E(H))$
has the vertex set $V(G)\times V(H),$ and vertices $(u,v),(x,y)$ are adjacent whenever $u=x$ and $vy\in
E(H)$, or $ux\in E(G)$ and $v=y$. 

The \emph{strong product}  $G\boxtimes H$ of graphs $G=(V(G),E(G))$ and $H=(V(H),$ $E(H))$
has the vertex set $V(G)\times V(H),$ and vertices $(u,v),(x,y)$ are adjacent whenever $u=x$ and $vy\in
E(H)$, or $ux\in E(G)$ and $v=y$, or $ux \in E(G)$ and $vy \in E(H)$.

The \emph{lexicographic product}  $G[H]$ of graphs $G=(V(G),E(G))$ and $H=(V(H),$ $E(H))$
has the vertex set $V(G)\times V(H),$ and vertices $(u,v),(x,y)$ are adjacent whenever $ux \in E(G)$, or
$u=x$ and $vy \in E(H)$.

Let $G$ and $H$ be arbitrary graphs, and $v \in V(H)$. We refer to the set $V(G) \times \{v\}$ as a \emph{$G$-layer}. Similarly, the set $\{u\} \times V(H)$ for $u \in V(G)$ is an \emph{$H$-layer}. When referring to a specific $G$- or $H$-layer, we denote them by $G^v$ or $^uH$, respectively. Layers can also be regarded as the graphs induced on these sets. Obviously, in the Cartesian, strong and lexicographic products, a $G$-layer or $H$-layer is isomorphic to $G$ or $H$, respectively.

The \emph{corona product} of two graphs $G$ and $H$, denoted by $G \circ H$, is defined as the graph obtained by taking one copy of $G$ and $|V(G)|$ copies of $H$ and joining the $i$-th vertex of $G$ to every vertex in the $i$-th copy of $H$.

\section{Basic properties}
\label{sec:basic}

In this section, we establish some basic properties of the burning game, as well as several elementary bounds on $\bg$ that will be useful throughout the remainder of the paper.

As might be expected, the game burning number of a graph is closely connected to its burning number and its cooling number.  Our first result formalizes this connection.

\begin{proposition}
    \label{prop:trivial-bounds}
    If $G$ is a connected graph, then $b(G) \leq \bg(G) \leq \min\{ \CL(G), 2 b(G^2) - 1\}$ and $b(G) \leq \bg'(G) \leq \min\{ \CL(G), 2 b(G^2)\}$.
\end{proposition}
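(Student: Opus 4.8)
The four inequalities split into two essentially immediate ones and one (appearing with two parities) that requires a strategy for \fastplayer. The common starting observation, which I would record first, is that the dynamics of the burning game agree with those of the burning and cooling processes as far as the burned set is concerned: if $s_1,s_2,\dots$ is the sequence of sources selected in a play of the burning game (by either player), then an easy induction on $t$ — using that round $1$ has only a selection phase while each later round first applies $N[\cdot]$ — shows that the set of vertices burned by the end of round $t$ equals $\bigcup_i N_{t-i}[s_i]$, the union over those rounds $i\le t$ in which a source was actually chosen. (I would note once and for all the small caveat that the last round may consist of the spreading phase alone.)

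For the lower bounds $b(G)\le\bg(G)$ and $b(G)\le\bg'(G)$: if a play lasts $T$ rounds, the displayed formula shows that the chosen sources, padded by one arbitrary vertex if the last round had no selection phase, form a burning sequence of length $T$ in the burning process. Hence $T\ge b(G)$ for every play, in particular for optimal play, which gives both inequalities. For the bounds $\bg(G)\le\CL(G)$ and $\bg'(G)\le\CL(G)$: the same burned-set formula holds verbatim for the cooling process, whose round structure is identical, so reading off a play of the burning game as its sequence of sources produces a legal run of the cooling process of exactly the same length; since $\CL(G)$ is the maximum length of such a run, every play — again in particular the optimal one — lasts at most $\CL(G)$ rounds.

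For $\bg(G)\le 2b(G^2)-1$ and $\bg'(G)\le 2b(G^2)$ I would give a strategy for \fastplayer. Write $b=b(G^2)$. The $k$-th closed neighborhood of a vertex $v$ in the square $G^2$ is exactly $N_{2k}[v]$, since $d_{G^2}(u,v)=\ceil{d_G(u,v)/2}$; hence a burning sequence $w_1,\dots,w_b$ of $G^2$ satisfies $\bigcup_{i=1}^{b}N_{2(b-i)}[w_i]=V(G)$. Fix such a sequence. In the \fastplayer-start game \fastplayer moves in rounds $1,3,\dots,2b-1$; on his $i$-th move (round $2i-1$) he burns $w_i$ if it is still unburned, and otherwise burns an arbitrary unburned vertex (or the game has already ended). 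The key technical lemma, proved by a one-line induction via the spreading phase, is that if a vertex $w$ is burned by the end of round $s$, then $N_{t-s}[w]$ is burned by the end of round $t$ for every $t\ge s$. In every case $w_i$ is burned by the end of round $2i-1$ — \fastplayer plants it then, or it burned earlier, or it burned in the spreading phase of that round — so by the end of round $T=2b-1$ the set $N_{(2b-1)-(2i-1)}[w_i]=N_{2(b-i)}[w_i]$ is burned; taking the union over $i$ covers $V(G)$, so the game is over by round $2b-1$. The \slowplayer-start case is identical, with \fastplayer's $i$-th move in round $2i$, $T=2b$, and the estimate $T-2i=2(b-i)$.

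The only genuinely delicate part is the bookkeeping in this last argument: one has to check that it is harmless for \fastplayer to deviate from planting $w_i$ once $w_i$ is already burned (this only means the fire has had \emph{more} time to spread from $w_i$, so the required neighborhood $N_{2(b-i)}[w_i]$ is still covered), and that \fastplayer can always carry out the prescribed number of moves — which is automatic, since if no unburned vertex remains at some point the game has already ended, only improving the bound. The two induction claims about burned sets are routine, so I expect the write-up to be short; the substantive content is setting up the round-parity alignment between \fastplayer's moves and the radii $2(b-i)$ coming from the burning sequence of $G^2$.
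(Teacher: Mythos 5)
Your proposal is correct and follows essentially the same route as the paper: the lower bound and the $\CL$ bound by reading any play of the game as a burning (resp.\ cooling) sequence, and the $2b(G^2)-1$ (resp.\ $2b(G^2)$) bound by having \fastplayer plant the $i$-th source of an optimal burning sequence of $G^2$ in round $2i-1$ (resp.\ $2i$) and tracking that $N^{G^2}_{k}[v]=N^{G}_{2k}[v]$. Your write-up is somewhat more explicit about the burned-set formula and the last-round caveat, but the substance is identical to the paper's argument.
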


\begin{proof}
    Regardless of who plays first, let $x_1, x_2, \ldots, x_k$ be a sequence of moves in the burning game. Selecting $x_1, x_2, \ldots$ as sources in the graph burning burns the whole graph in $\bg(G)$ number of rounds, thus $\bg(G) \geq b(G)$; similarly $\bg(G) \leq \CL(G)$. 

    To see that $\bg(G) \leq 2 b(G^2) - 1$, consider the following strategy for Burner. Let $b(G^2) = k$ and let $x_1, \dots, x_k$ be an optimal sequence of sources for the burning process on $G^2$. Burner's strategy in the burning game is to play vertex $x_i$ on his $i$th turn or, if $x_i$ is already burned, to play any unburned vertex. By choice of the $x_i$, every vertex $v$ in $G$ must be within distance $k-i$ of some $x_i$ in $G^2$, thus $v$ must be within distance $2(k-i)$ of some $x_i$ in $G$.  Burner's strategy ensures that vertex $x_i$ is burned no later than round $2i-1$; since the fire will reach $v$ within the next $2(k-i)$ rounds of the game, $v$ will burn no later than round $2 k - 1$.

    A similar argument shows that $\bg'(G) \le 2b(G^2)$; the only change is that Burner plays $x_i$ in round $2i$ (provided that it is not already burned by then).
\end{proof}

Note that since $G$ is a spanning subgraph of $G^2$, any sequence of moves that burns $G$ would also burn $G^2$; hence $b(G^2) \leq b(G)$. Thus, as a consequence of Proposition \ref{prop:trivial-bounds} we have $\bg(G) \le 2b(G)-1$ and $\bg'(G) \le 2b(G)$; these bounds are sometimes more convenient than the bounds involving $b(G^2)$.

As with the burning number, the game burning number of a graph $G$ can be bounded above in terms of the radius of $G$, as we next show.

\begin{proposition}
    \label{prop:radius}
    If $G$ is a connected graph, then $\bg(G) \leq \rad(G) + 1$ and $\bg'(G) \leq \min\{\rad(G) + 2, \diam(G)+1\}$.
\end{proposition}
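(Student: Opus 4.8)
The plan is to hand Burner (or, for one of the bounds, simply Staller) an explicit first move and then appeal to one elementary monotonicity fact: if a vertex $u$ is burned at the end of some round $r$, then every vertex of $N_k[u]$ is burned at the end of round $r+k$, for all $k \geq 0$. This follows by a short induction on $k$ from the spreading rule, since in each subsequent round the spreading phase burns every unburned neighbor of an already-burned vertex, and additional burned vertices (whoever created them, and whether or not they were ``needed'') can only accelerate the fire; alternatively, one can invoke the Continuation Principle established elsewhere in this section. Recall also that the first round of the game consists only of the selection phase, so whoever moves in round $1$ lights their chosen vertex at the end of round $1$.

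For $\bg(G) \leq \rad(G)+1$, let Burner select a center $v$ of $G$ (a vertex with $ecc(v)=\rad(G)$) on his first move, in round $1$. Then $v$ is burned at the end of round $1$, so by the monotonicity fact every vertex within distance $\rad(G)$ of $v$ --- that is, all of $V(G)$ --- is burned at the end of round $\rad(G)+1$, regardless of Staller's play. For $\bg'(G) \leq \rad(G)+2$ the situation is identical except that Burner's first move now occurs in round $2$; selecting a center $v$ then, $v$ is burned at the end of round $2$ and all of $N_{\rad(G)}[v]=V(G)$ is burned at the end of round $\rad(G)+2$.

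Finally, for $\bg'(G) \leq \diam(G)+1$ no strategy for Burner is needed: whatever vertex $u$ Staller picks in round $1$ is burned at the end of round $1$ and satisfies $ecc(u) \leq \diam(G)$, so by monotonicity all of $N_{ecc(u)}[u]=V(G)$ is burned by the end of round $ecc(u)+1 \leq \diam(G)+1$; since this bound holds against every Staller strategy, $\bg'(G) \leq \diam(G)+1$, and combining with the previous paragraph gives $\bg'(G)\leq\min\{\rad(G)+2,\diam(G)+1\}$. The proof is genuinely short; the only step demanding care is stating and verifying the monotonicity fact precisely (or applying the Continuation Principle correctly), and the one pitfall to keep in mind is the off-by-one shift between the two games --- Burner's center is lit at the end of round $1$ when he starts but only at the end of round $2$ when Staller starts --- which is exactly what separates the $\rad(G)+1$ and $\rad(G)+2$ bounds.
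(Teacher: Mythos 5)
Your proof is correct and follows essentially the same route as the paper's: Burner burns a central vertex in round 1 (respectively round 2 in the Staller-start game), and the $\diam(G)+1$ bound comes from observing that Staller's forced first move already dominates the graph within its eccentricity. The only difference is that you make the spreading monotonicity explicit, which the paper leaves implicit.
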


\begin{proof}
    In the Burner-start game, Burner can burn a central vertex $v$ in round 1, then play arbitrarily for the rest of the game.  Since all vertices are within distance $\rad(G)$ of $v$, they all burn by round $\rad(G)+1$; hence $\bg(G) \le \rad(G)+1$. In the Staller-start game, Burner can likewise burn $v$ in round 2 (provided it is not already burned) to ensure that all vertices burn within $\rad(G)+2$ rounds, so $\bg'(G) \le \rad(G)+2$.  Finally, $\bg'(G) \le \diam(G)+1$ because no matter which vertex Staller burns in round 1, all vertices will have burned by round $\diam(G)+1$.
\end{proof}


We conclude this section with another general upper bound for the game burning number which will be useful especially in Section \ref{sec:nordhaus-gaddum}.

\begin{proposition}
    \label{prop:Delta}
    Let $G$ be a graph on $n$ vertices. If $\Delta(G) \leq n-2$, then $\bg(G) \leq n - \Delta(G)$, and if $\Delta(G) \leq n-3$, then $\bg'(G) \leq n - \Delta(G)$.
\end{proposition}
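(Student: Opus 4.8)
The plan is to give Burner an explicit strategy centered on a fixed vertex $v$ of maximum degree: he tries to get $v$ burned as early as possible and then plays arbitrarily. The engine of the proof is a one-line bookkeeping fact valid in both versions of the game: every selection phase that actually occurs burns (at least) one new vertex, and spreading phases never shrink the burned set, so if at least $k$ vertices are burned after the selection phase of round $t_0$ (with $k<n$), then at least $k+(t-t_0)$ are burned after the selection phase of round $t$ for every $t\ge t_0$; in particular the game ends by round $t_0+n-k$. Thus it suffices in each case to pin down a good lower bound on the burned set at one early round.

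For $\bg(G)$: Burner burns $v$ in round $1$. After the spreading phase of round $2$ all of $N[v]$ is burned, i.e.\ at least $\Delta(G)+1$ vertices; since $\Delta(G)\le n-2$ the game is still running, so after the selection phase of round $2$ at least $\Delta(G)+2$ vertices are burned. The bookkeeping fact with $t_0=2$, $k=\Delta(G)+2$ then gives $\bg(G)\le n-\Delta(G)$.

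For $\bg'(G)$: let $u$ be Staller's first move, and consider two cases. If $v\notin N[u]$, then $v$ is still unburned when Burner moves in round $2$, so he burns $v$; after the spreading phase of round $3$ the set $N[v]$ is burned and so is $u$, and since $v\notin N[u]$ we have $u\notin N[v]$, so at least $\Delta(G)+2$ vertices are burned. If $v\in N[u]$, then $v$ is already burned after the spreading phase of round $2$: if $N[u]\cup N[v]\ne V(G)$, Burner burns in round $2$ some vertex $w\notin N[u]\cup N[v]$ (such a $w$ is available since it lies outside the burned set $N[u]$), and after the spreading phase of round $3$ the set $N[v]\cup\{w\}$ is burned, again at least $\Delta(G)+2$ vertices; while if $N[u]\cup N[v]=V(G)$, then since $v\in N[u]$ the spreading phase of round $3$ burns $N\bigl[N[u]\bigr]\supseteq N[u]\cup N[v]=V(G)$, so the game ends by round $3\le n-\Delta(G)$. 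In the two main cases, since $\Delta(G)\le n-3$ the game is still running after the spreading phase of round $3$, so at least $\Delta(G)+3$ vertices are burned after its selection phase; the bookkeeping fact with $t_0=3$, $k=\Delta(G)+3$ yields $\bg'(G)\le n-\Delta(G)$.

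The only subtle point — and the main obstacle, though a mild one — is the Staller-start bound: the crude strategy ``burn $v$ as soon as possible, then play arbitrarily'' only delivers $\bg'(G)\le n-\Delta(G)+1$, and removing this off-by-one is exactly what forces Burner's extra move into a vertex outside $N[u]\cup N[v]$ in the case $v\in N[u]$, together with the separate treatment of the degenerate case $N[u]\cup N[v]=V(G)$. I would also keep track of the easy edge cases where the game ends before round $2$ (resp.\ round $3$), which only help the bound.
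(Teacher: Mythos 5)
Your proof is correct and follows essentially the same route as the paper's: Burner anchors his strategy on a maximum-degree vertex $v$ (playing $v$ immediately in the Burner-start game, and reacting to whether Staller's first move lies in $N[v]$ in the Staller-start game), and a counting argument --- yours tracks the number of burned vertices round by round, while the paper's tracks the total number of vertices ever selected --- converts the head start of $\Delta(G)+1$ freely burned vertices into the bound $n-\Delta(G)$. The only blemish is the sentence claiming that $\Delta(G)\le n-3$ forces the game to survive the spreading phase of round $3$ (it does not), but this is harmless because a game ending by round $3$ already satisfies $\bg'(G)\le 3\le n-\Delta(G)$, exactly as your closing remark about early-ending edge cases anticipates.
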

\begin{proof}
    The upper bound for the Burner-start game follows from an analogous result for the burning number given in \cite{bonato2016burn}. Burner's strategy is to start the game by playing a vertex $v$ with maximum degree and play any unburned vertex available in the next turns. Since during the spreading phase of round 2 all neighbors of $v$ burn, these vertices can never be played during the game. Thus at most $n - \Delta(G)$ vertices can be played. If the game ends with one of the players playing the last remaining vertex, then clearly $\bg(G) \leq n - \Delta(G)$. Now suppose that the game ends during the spreading phase of round $k$ (so $\bg(G) \leq k$). Since $v$ is not a universal vertex (as $\Delta(G) \neq n-1$), we must have $k \ge 3$.  Since all neighbors of $v$ burn during the spreading phase of round 2, it must be that some vertex in $V(G) - N[v]$ burns during the spreading phase of round $k$.  Hence at least $\Delta(G)+1$ vertices burn during spreading phases, so at most $n - \Delta(G) - 1$ vertices are played during the game. Thus $k \leq (n - \Delta(G) - 1) + 1 = n - \Delta(G)$.

    Let $v \in V(G)$ be a vertex with maximum degree. In the Staller-start game, Burner's strategy is the following. If Staller starts the game on $N[v]$, then Burner's first move is a vertex from $V(G) - N[v]$ (note that this set is not empty since $v$ is not an universal vertex). This ensures that only one vertex is played on $N[v]$ during the game as they all burn during or before the spreading phase of round 3. However, if Staller's first move is not on $N[v]$, then Burner plays $v$, and again only one move is played on $N[v]$. Thus $\bg'(G) \leq \max\{3, n - \Delta(G)\} = n - \Delta(G)$.
\end{proof}

\subsection{Continuation Principle}
\label{sec:continuation-principle}

In this subsection, we present a fundamental result regarding the behavior of the burning game that greatly simplifies many arguments.  

When analyzing the burning game, we would often like to consider a game that is ``already in progress'' -- that is, with some vertices already burning.  Given a graph $G$ and $B \subseteq V(G)$, we let $G \vert B$ denote the graph $G$, with the understanding that the vertices in $B$ are already burning prior to the start of the game.  We refer to the burning game on $G \vert B$ as the burning game \textit{relative to $B$}, and we denote the number of rounds needed to burn all of $G \vert B$ -- assuming that both players play optimally -- by $\bg(G \vert B)$ if Burner makes the first move, and by $\bg'(G \vert B)$ if Staller makes the first move.

The following result, known as the \textit{Continuation Principle}\footnote{The name for this result is taken from that of an analogous result for the domination game; see \cite{KWZ13}.}, formalizes the intuition that starting the game with additional vertices burned can never increase the length of the game.

\begin{theorem}[Continuation Principle]
    \label{thm:continuation-principle}
    If $A \subseteq B \subseteq V(G)$, then $\bg(G|B) \leq \bg(G|A)$.
\end{theorem}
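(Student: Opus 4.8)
The natural approach is a strategy-stealing / strategy-mirroring argument: I would show that Burner can play the game on $G\vert B$ at least as fast as the game on $G\vert A$, by having the "fast" game ($G\vert B$) imitate a hypothetical optimal pair of plays in the "slow" game ($G\vert A$). The key invariant to maintain is that, after each round, the set of burned vertices in the $G\vert B$ game contains the set of burned vertices in the $G\vert A$ game. Since $A\subseteq B$, this holds at time $0$. If I can preserve this invariant round by round, then the $G\vert B$ game finishes no later than the $G\vert A$ game, giving $\bg(G\vert B)\le \bg(G\vert A)$.

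The difficulty — and the main obstacle — is handling \slowplayer's moves and the asymmetry between the two games. Since the burned set in the $G\vert B$ game is always a superset, a vertex \slowplayer wishes to burn in the $G\vert A$ game may already be burned in the $G\vert B$ game; conversely \slowplayer is the adversary in \emph{both} games, so I must argue that whatever \slowplayer does in the $G\vert B$ game can be "projected back" to a legal \slowplayer move in the $G\vert A$ game (or to no move at all) without breaking the invariant. The clean way to set this up is an \emph{imagination strategy}: Burner, playing the real game on $G\vert B$, maintains in his head an imagined game on $G\vert A$. Whenever \slowplayer moves in the real game on vertex $u$, Burner imagines \slowplayer playing $u$ in the $G\vert A$ game if $u$ is unburned there, and otherwise imagines \slowplayer playing an arbitrary unburned vertex (or, if the imagined game has already ended, ignores it). Burner's real move is then obtained by copying his own optimal move from the imagined game: if that move is already burned in the real game, he plays anything. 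One must check that the spreading phases respect the superset relation — this is immediate, since $B' \supseteq A'$ implies $N[B']\supseteq N[A']$ — and that the selection phases do too, which holds because in each round the real game burns at least the vertex the imagined game burns (either the same vertex, or something, plus that vertex is already burned).

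Concretely, the steps I would carry out are: (1) formalize the superset invariant $\mathrm{Burned}_B(t)\supseteq \mathrm{Burned}_A(t)$ after round $t$; (2) verify the base case from $A\subseteq B$; (3) handle a spreading phase, using monotonicity of closed neighborhoods under inclusion; (4) handle a \slowplayer selection phase in the real $G\vert B$ game by defining the corresponding imagined \slowplayer move in $G\vert A$ and checking the invariant is preserved (the subtle case: the vertex \slowplayer really plays is already burned in the imagined game); (5) handle a \fastplayer selection phase by copying the imagined optimal Burner move, again checking the invariant; (6) conclude that once $\mathrm{Burned}_A(t)=V(G)$, also $\mathrm{Burned}_B(t)=V(G)$, so the real game ends no later than the imagined optimal game, whence $\bg(G\vert B)\le \bg(G\vert A)$. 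A remark worth including: the same argument gives $\bg'(G\vert B)\le \bg'(G\vert A)$, and more generally allows comparison across the Burner-start and Staller-start versions when the extra burned vertices can "absorb" a move; but the statement as given only needs the same-start comparison, so the imagination bookkeeping stays simple.
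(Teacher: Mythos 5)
Your proposal is correct and follows essentially the same imagination-strategy argument as the paper's proof: the same invariant $\mathrm{Burned}_A(t)\subseteq\mathrm{Burned}_B(t)$, the same monotonicity of closed neighborhoods for the spreading phase, and the same copying rules for both players' selection moves. The one point worth tightening: the ``subtle case'' you flag in step (4) --- Staller's real move being already burned in the imagined game --- is vacuous, because the invariant forces every vertex unburned in the real game to be unburned in the imagined game as well; this matters because the fallback you propose there (imagining Staller playing an arbitrary unburned vertex) would actually break the superset invariant if it were ever invoked, so the correct observation is simply that it never is.
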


\begin{proof}
    We provide a strategy of Burner on $G|B$ (under the assumption that Staller is playing optimally); we call this the \textit{real} game. Burner imagines that a game is being played on $G|A$ simultaneously, and we call this the \textit{imagined} game. Burner will play optimally in the imagined game, and will use his strategy for that game to guide his play in the real game.  The set of burned vertices after time step $t$ is denoted by $R(t)$ and $I(t)$ in the real and the imagined game, respectively. Burner will ensure that the following invariant is preserved after each time step: $I(t) \subseteq R(t)$. Since $A \subseteq B$, this holds for $t=0$. 

    Fix some $t \ge 0$ and consider the state of the game after time step $t$.  Since $I(t) \subseteq R(t)$, we have $N[I(t)] \subseteq N[R(t)]$, so after the spreading phase of round $t+1$, every vertex that is burned in the imagined game is also burned in the real game.  If it is Burner's turn to select a vertex in the selection phase, then he considers the imagined game and finds his optimal move $x$ there. If $x$ is unburned in the real game as well, he copies the move there; otherwise, he plays $x$ in the imagined game but an arbitrary unburned vertex $y$ in the real game.  In either case we have 
    \[R(t+1) \supseteq N[R(t)] \cup \{x\} \subseteq N[I(t)] \cup \{x\} = I(t+1),\]
    so the invariant is preserved after time step $t+1$.

    Suppose instead that it is Staller's turn to select a vertex in the selection phase, and Staller makes a move $y$ in the real game.  Since the invariant holds before this move, $y$ is also a legal move in the imagined game. Thus Burner copies the move $y$ to the imagined game as the move of Staller, preserving the invariant.

    The invariant ensures that the real game finishes at the same time step or before the imagined game. Let $m$ be the number of moves in the game on $G|B$ if Burner is using the above strategy. Then since Staller is playing optimally in the real game and Burner is playing optimally in the imagined game, we have $\bg(G|B) \leq m \leq \bg(G|A)$.
\end{proof}

One useful consequence of the Continuation Principle is that we do not need to worry about whether or not Burner's moves are legal (i.e. whether or not the vertices Burner selects are still unburned).  Selecting a vertex that is already burned would not enlarge the set of burned vertices, and thus would never be better for Burner than playing an arbitrary unburned vertex.  Thus, when presenting a strategy for Burner, we may allow him to select unburned vertices, since doing so would not afford him any additional advantage.

\begin{proposition}
    \label{prop:F-S-start}
    If $G$ is a connected graph, then $|\bg(G) - \bg'(G)| \leq 1$.
\end{proposition}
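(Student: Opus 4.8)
The plan is to prove the two inequalities $\bg'(G) \le \bg(G) + 1$ and $\bg(G) \le \bg'(G) + 1$ separately. Each will follow from a single structural observation — that once the opening move has been played, the remaining play is a round-shifted copy of the \emph{opposite}-start game on $G$ with one additional vertex pre-burned — combined with the Continuation Principle (Theorem~\ref{thm:continuation-principle}).

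For $\bg'(G) \le \bg(G)+1$: consider the Staller-start game on $G$ and let $v$ be Staller's round-$1$ move. For every $t \ge 1$, round $t+1$ of this game begins with the spreading phase and then a selection; round $2$ spreads from exactly the set $\{v\}$ and is a Burner move, and thereafter the players alternate. This is precisely the Burner-start game on $G\vert\{v\}$, under the identification of round $t+1$ of the original game with round $t$ of the latter. Hence, under optimal play, the original game lasts $1 + \bg(G\vert\{v\})$ rounds, and since Staller picks her opening move to prolong the game we get $\bg'(G) = 1 + \max_{v\in V(G)}\bg(G\vert\{v\})$ (this is in fact an equality, though only $\le$ is needed). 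By the Continuation Principle, $\bg(G\vert\{v\}) \le \bg(G\vert\emptyset) = \bg(G)$ for every $v$, so $\bg'(G) \le \bg(G)+1$.

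For $\bg(G) \le \bg'(G)+1$: in the Burner-start game on $G$, let Burner play an arbitrary vertex $v$ in round $1$ and then play optimally. By the same identification as above, rounds $2,3,\dots$ now constitute the Staller-start game on $G\vert\{v\}$, so Burner can force the whole game to end within $1 + \bg'(G\vert\{v\})$ rounds; thus $\bg(G) \le 1 + \bg'(G\vert\{v\})$. It remains to note $\bg'(G\vert\{v\}) \le \bg'(G)$, which is the Staller-start analogue of the Continuation Principle: the proof of Theorem~\ref{thm:continuation-principle} uses only that Burner follows his optimal strategy in an imagined game with fewer burned vertices and mirrors it (or plays arbitrarily) in the real game, maintaining $I(t)\subseteq R(t)$, and nothing there depends on who moves first. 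Alternatively one may skip this and have Burner imagine a Staller-start game on $G$ itself, wasting round $1$ on $v$ and then mirroring his optimal imagined strategy with a one-round delay while maintaining the invariant $I(t)\subseteq R(t+1)$. Either way $\bg(G)\le\bg'(G)+1$, and combining the two bounds gives $|\bg(G)-\bg'(G)|\le 1$.

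I expect no serious obstacle. The only points requiring care are the bookkeeping of the one-round offset \emph{together with} the spreading phases in the reduction to the opposite-start game on $G\vert\{v\}$ — in particular verifying that round $2$ of the real game truly coincides with round $1$ of that game, spreading set and mover included — and the remark that the Continuation Principle holds verbatim for the Staller-start version.
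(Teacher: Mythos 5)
Your proof is correct and follows essentially the same route as the paper's: peel off the opening move, view the remaining rounds as the opposite-start game on $G$ with the first-played vertex pre-burned, and bound that by $\bg(G)$ or $\bg'(G)$ via the Continuation Principle. Your explicit justification that the Continuation Principle holds verbatim in the Staller-start form is a detail the paper invokes only tacitly, so that care is welcome rather than a deviation.
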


\begin{proof}
    Let $v \in V(G)$ be an optimal first move for Burner. Then by the Continuation Principle, $\bg(G) = 1 + \bg'(G|N[v]) \leq 1 + \bg'(G)$, thus $\bg(G) - \bg'(G) \leq 1$. Analogously we obtain that $\bg'(G) - \bg(G) \leq 1$.
\end{proof}

All possibilities from Proposition \ref{prop:F-S-start} can be achieved. For $n \geq 2$, $\bg(K_n) = \bg'(K_n) = 2$. For $n \geq 3$, $\bg(K_{1,n}) = 2$ and $\bg'(K_{1,n}) = 3$. For even $n \geq 4$, $\bg(Q_n) = \frac{n}{2} + 2$ and $\bg'(Q_n) = \frac{n}{2} + 1$ (see Theorem \ref{thm:hypercubes}).

\begin{theorem}
    \label{thm:edge-removal}
    If $G$ is a connected graph and $e$ is any edge in $G$, then $\bg(G) \le \bg(G - e) \leq \bg(G) + 2$ and $\bg'(G) \le \bg'(G - e) \leq \bg'(G) + 2$.
\end{theorem}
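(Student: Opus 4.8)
The plan is to prove the two lower bounds by a short imagination argument and to spend all the effort on the two upper bounds. For $\bg(G)\le\bg(G-e)$ (and identically for $\bg'$) I would mimic the proof of the Continuation Principle: Burner plays the real game on $G$ while running, in his head, a game on $G-e$ in which he follows an optimal strategy, copying his imagined moves into the real game and each of Staller's real moves into the imagined game. If $R(t)$ and $I(t)$ denote the sets of burned vertices after round $t$ in the real and imagined games, the invariant $I(t)\subseteq R(t)$ is maintained: in the spreading phase because $N_{G-e}[S]\subseteq N_G[S]$ for every $S$, and in the selection phase because a vertex unburned in the real game is unburned in the imagined game, so Staller's moves (and, after the usual adjustment for already-burned vertices, Burner's moves) always transfer. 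Hence the real game ends no later than the imagined one.

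For the upper bounds, write $e=uv$. The structural observation that makes things work is that $G$ and $G-e$ have exactly the same spreading behaviour on any vertex set $S$ with $\{u,v\}\subseteq S$, since the edge $uv$ is then irrelevant. Burner's strategy on $G-e$ is again to follow an optimal strategy for $G$ in an imagined game and copy moves, but he is allowed a single \emph{deviation}. Relabel $u,v$ so that $u$ is the endpoint that first becomes burned in the imagined game, say in round $r$. One checks that the real and imagined games stay perfectly synchronized up to round $r+1$, and that the only way the real game can fall behind is that in round $r+1$ the vertex $v$ becomes burned in the imagined game (through its neighbour $u$) but not in the real game. Burner's deviation is to play $v$ himself in the real game on the first opportune turn at or just after round $r$; after this move $\{u,v\}$ are burned in the real game, it re-synchronizes with the imagined game, and from that point on the real game simply \emph{is} a play of the burning game on $G$ in which Burner uses an optimal strategy with exactly one of his moves replaced.

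The cost of that single replacement is then controlled by a lemma I would isolate: if in the burning game on a graph $H$ Burner plays optimally except on one of his turns, the game lasts at most $\bg(H)+2$ rounds (and at most $\bg'(H)+2$ in the Staller-start version). This is proved by letting $B$ be the set of burned vertices just before the off-turn; after that turn the burned set contains $N[B]$ no matter what Burner did, after which optimal play finishes within $\bg'(H|N[B])$ more rounds, and one combines $\bg'(H|N[B])\le\bg'(H|B)$ (Continuation Principle), $\bg'(H|B)\le\bg(H|B)+1$ (Proposition~\ref{prop:F-S-start}), and the fact that, Burner having played optimally up to that point, the rounds already elapsed plus $\bg(H|B)$ do not exceed $\bg(H)$. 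Applying the lemma with $H=G$ yields $\bg(G-e)\le\bg(G)+2$, and its Staller-start version yields $\bg'(G-e)\le\bg'(G)+2$.

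The main obstacle is the middle step: verifying that the single deviation genuinely re-synchronizes the two games, leaving no residual discrepancy to propagate. This needs a short case analysis on the parity of $r$ — if $r+1$ is Burner's turn he plays $v$ then (and in the imagined game $v$ is by then already burned, so this costs nothing there), whereas if $r+1$ is Staller's turn he must act already in round $r$ — together with a check of how $u$ first becomes burned in the imagined game, by spreading versus as a freshly played source. The latter case, which includes the situation where $e$ is a bridge and the fire crosses it late while the endpoint being abandoned feeds its own region, is the one that requires genuine care, and is where I would expect the delicate part of the argument to lie; the easy cases (an endpoint of $e$ getting burned ``for free'' in the real game, or the two endpoints burning in the same round so that no discrepancy ever arises) are absorbed without affecting the count.
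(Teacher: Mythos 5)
Your lower-bound argument and the overall shape of your upper-bound strategy (follow an optimal strategy for $G$ on $G-e$ until an endpoint of $e$ burns in some round $r$, have Burner spend one move on the other endpoint, then exploit the fact that $G$ and $G-e$ spread identically once both endpoints are burned) match the paper's proof. Your one-deviation lemma is correct as stated and does close the cases where Burner can play $v$ in round $r$ or $r+1$. But there is a genuine gap in exactly the sub-case you flag as delicate and leave unresolved: round $r$ is a Burner turn and $u$ first enters the burned set as Burner's own freshly played source in that round. Then Burner cannot also play $v$ in round $r$, and his next turn is only round $r+2$; meanwhile, in the comparison game on $G$, $v$ burns in the spreading phase of round $r+1$ and $N[v]$ burns in the spreading phase of round $r+2$. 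So even after Burner plays $v$ in round $r+2$, the real burned set is still missing part of $N(v)$, and everything reached through $v$ lags by a round thereafter. The real game is therefore \emph{not} ``a play of the burning game on $G$ with exactly one of Burner's moves replaced,'' and your lemma does not apply: its key inequality (rounds elapsed plus $\bg(G\vert B)$ is at most $\bg(G)$) needs $B$ to be a state reachable under Burner's optimal $G$-strategy, which the lagging real state is not, and $\bg(G\vert B)$ can exceed $\bg(G\vert B\cup\{v\})$ by an uncontrolled amount, so you cannot simply pretend the missing vertex is harmless.

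The repair is to drop the ``play on $G$ with one deviation'' framing in that sub-case and argue as the paper does: let $S$ be the burned set at the end of round $r$, which coincides with the set the same moves would have produced on $G$ because $e$ has not yet carried fire, so Burner's optimality gives $\bg(G\vert S)\le \bg(G)-r$ (with the Burner- or Staller-start relative game chosen according to whose turn follows round $r$). At the end of round $r+2$ the real burned set $S'$ contains $S$ together with both endpoints of $e$, hence $\bg(G-e\vert S')=\bg(G\vert S')\le \bg(G\vert S)\le \bg(G)-r$ by the Continuation Principle, and therefore $\bg(G-e)\le r+2+\bg(G)-r=\bg(G)+2$. This argument also absorbs your easy cases, so the one-deviation lemma, while correct, turns out not to be needed.
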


\begin{proof}
    We argue that $\bg(G) \le \bg(G - e) \leq \bg(G) + 2$; the proof that $\bg'(G) \le \bg'(G - e) \leq \bg'(G) + 2$ is similar.
    
    Let $e=xy$.
    The lower bound for $\bg(G-e)$ is easy to see, as removing one edge from $G$ cannot decrease the game burning number $\bg(G)$, but can only slow down or stop the spreading process. 

    To see the upper bound, suppose that \fastplayer plays by strategy $\mathcal{B}$ on $G-e$ that uses his optimal strategy in $G$, until one of the endpoint of $e$, say $x$, burns. Suppose that this occurs in round $k$. Note that $k \le \bg(G)$ since Burner follows an optimal strategy for the game on $G$. Moreover, suppose that \slowplayer played a vertex in this round; the other case is similar. Let $S$ denote the set of burned vertices at the end of round $k$. Note that since fire has not yet had the opportunity to spread along edge $e$, and since \fastplayer has followed an optimal strategy for the game on $G$ (although \slowplayer might not have), we have $\bg(G) \ge k+ \bg(G \vert S)$, hence $\bg(G \vert S) \le \bg(G)-k$.  
    
    If vertex $y$ burns before Burner's next turn, then the absence of edge $e$ has not impacted the game, since there would have been no opportunity for fire to spread along $e$. If the game ends in the spreading phase of round $k+1$, then $\bg(G-e) = k+1 \le \bg(G)+1$.  Otherwise, in round $k+1$ following $\mathcal{B}$, Burner plays $y$ and then continues using the optimal strategy for $G$. \fastplayer's optimal strategy for $G-e$ can last as long as by playing using $\mathcal{B}$, i.e. $\bg(G-e)$ is at most the number of rounds played by $\mathcal{B}$, denote it by $t_{\mathcal{B}}$. Once again, if the game ends with Burner's move or anytime during round $k+2$, then $\bg(G-e) \le t_{\mathcal{B}} \le k+2 \le \bg(G)+2$. Suppose otherwise, and let $z$ denote \slowplayer's move in round $k+2$. Let $S'$ denote the set of burned vertices at the end of round $k+2$, and note that $S' = N_2[S] \cup N[y] \cup \{z\}$ (with neighborhoods taken with respect to $G-e$).  Now observe that $\bg(G-e \vert S') = \bg(G \vert S')$ because both endpoints of $e$ belong to $S'$, so the presence or absence of $e$ does not impact the game relative to $S'$; additionally, $\bg(G \vert S') \le \bg(G \vert S)$ by the Continuation Principle. 
    Thus 
    \[\bg(G-e) \le k+2+ \bg(G-e \vert S') = k+2+\bg(G \vert S') \le k+2+\bg(G \vert S) \le  k+2+\bg(G)-k = \bg(G)+2,\] as claimed.
\end{proof}

Theorem \ref{thm:edge-removal} states that for any graph $G$ and any $e \in E(G)$, we have $\bg(G-e) \in \{\bg(G), \bg(G)+1, \bg(G)+2\}$.  In fact, for appropriate choices of $G$ and $e$, any of these three values for $\bg(G-e)$ can be achieved.  

To see that we can have $\bg(G) = \bg(G-e) = k$ for any $k \ge 2$, choose $n$ such that $\bg(P_n) = \bg(P_{n+1}) = k$, which is possible by Theorem \ref{thm:paths}.  Form a graph $G$ as follows.  Start with a path on vertices $v_1, \dots, v_n$ in order.  Suppose that, in the Burner-start game on this path, an optimal first move for Burner would be vertex $v_i$.  Let $y$ be a neighbor of $v_i$; add an additional vertex $x$ to $G$, along with edges $xv_i$ and $xy$.  Now $\bg(G) = \bg(P_n) = k$, since Burner can play $v_i$ on his first move, thereby burning $x$ before Staller's first turn.  (Note that the presence of edge $xy$ is irrelevant, since both $x$ and $y$ burn in the spreading phase of round 2.)  However, since $G-v_i y = P_{n+1}$, we have $\bg(G-v_iy) = \bg(P_{n+1}) = k = \bg(G)$, as claimed.

We can use a similar argument to construct a graph $G$ with edge $e$ such that $\bg(G) = k$ and $\bg(G-e) = k+1$; the only difference is that we need to choose $n$ such that $\bg(P_n) = k$ and $\bg(P_{n+1}) = k+1$.  As before, $\bg(G) = \bg(P_n) = k$, but now $\bg(G-v_iy) = \bg(P_{n+1}) = k+1$.  Likewise, similar arguments can be used to show that we can have $(\bg'(G),\bg'(G-e)) = (k,k)$ and $(\bg'(G),\bg'(G-e)) = (k,k+1)$.

Showing that we can have $\bg(G-e) = \bg(G)+2$ takes a bit more work.

\begin{example}\label{ex:difference_two}
For the graph $G$ in Figure \ref{fig:difference_two}, we have $\bg(G) = 5$ and $\bg(G-vw) = 7$.  

To ensure that $G$ burns within five rounds, \fastplayer can burn vertex $u$ in round 1, vertex $x$ in round 3, and vertex $y$ in round 5. 
Since every vertex $G$ is within distance 4 of $u$, distance 2 of $x$, or distance 0 of $y$, \fastplayer's moves ensure that the entire graph will be burned by the end of round 5, regardless of how \slowplayer plays.

Now consider the burning game on $G-vw$.  Suppose \fastplayer aims to burn the graph within six rounds; we explain how \slowplayer can thwart him.  First, note that \fastplayer's move in round 1 must belong to $N[u]$.  If \fastplayer plays outside of $N[u]$ in round 1, then \slowplayer can play either $w$ or $y$ in round 2 -- at least one of which is guaranteed to be unburned -- which ensures that $u$ is unburned at the end of round 2, along with all vertices between $u$ and at least five of $u_1, u_2, u_3$, $u_4, u_5,$ and $u_6$ (without loss of generality $u_1, u_2, u_3, u_4, u_5$).  At the end of round 2, $u_1$, $u_2$, $u_3$, $u_4$, and $u_5$ are all distance 5 or greater from all burned vertices, so fire cannot spread from any currently burning vertex to any of these five vertices within rounds 3-6.  Additionally, these five vertices are pairwise distance 8 apart, so no single move played in rounds 3-6 could cause more than one of these vertices to be burned by the end of round 6.  Consequently, if Burner does not play his first move in $N[u]$, then the graph cannot be fully burned by the end of round 6.

Suppose then that Burner plays his first move in $N[u]$.  In round 2, Staller will burn $u_1$.  At the end of round 2, no vertices have been burned in the component of $G-vw-\{v_1,v_2,v_3,v_4\}$ containing $w$.  Arguments similar to those in the preceding paragraph show that if Burner does not burn some vertex in $N[w]$ in round 3, then some $w_i$ will remain unburned by the end of round 6; likewise, if he does not burn some vertex in $N[x]$ in round 3, then some $x_i$ will remain unburned by the end of round 6.  Since no vertex belongs to both $N[w]$ and $N[x]$, at least one vertex will remain unburned by the end of round 6, hence $\bg(G-vw) \ge 7$.

We have argued that $\bg(G) \le 5$ and $\bg(G-vw) \ge 7$; Theorem \ref{thm:edge-removal} now implies that in fact $\bg(G) = 5$ and $\bg(G-vw) = 7$.
\end{example}

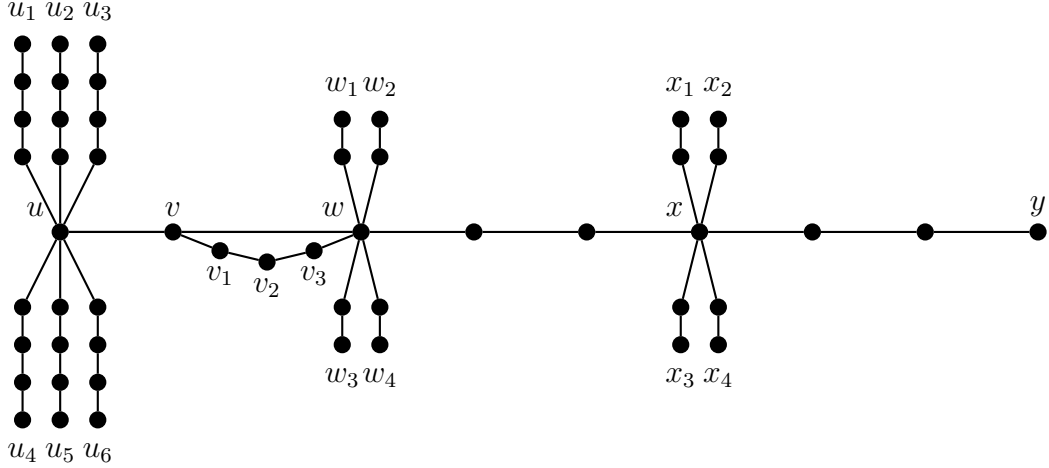
\begin{figure}
\centering
\begin{tikzpicture} [inner sep=0mm, thick,
 smallvertex/.style={draw=black, circle, minimum size=0.011cm},
 vertex/.style={draw=black, fill=black, circle, minimum size=0.2cm},
 xscale=1,yscale=1]

\node (a) at (0, 0) [vertex] {};
\node at (0,0) [above left = 0.2cm] {$u$};
 
\node (a11) at (-0.5, 1) [vertex] {};
\node (a12) at (-0.5, 1.5) [vertex] {};
\node (a13) at (-0.5, 2) [vertex] {};
\node (a14) at (-0.5, 2.5) [vertex] {};
\node at (a14) [above = 0.3cm] {$u_1$};
\node (a21) at (0., 1) [vertex] {};
\node (a22) at (0., 1.5) [vertex] {};
\node (a23) at (0., 2) [vertex] {};
\node (a24) at (0., 2.5) [vertex] {};
\node at (a24) [above = 0.3cm] {$u_2$};
\node (a51) at (0.5, 1) [vertex] {};
\node (a52) at (0.5, 1.5) [vertex] {};
\node (a53) at (0.5, 2) [vertex] {};
\node (a54) at (0.5, 2.5) [vertex] {};
\node at (a54) [above = 0.3cm] {$u_3$};
\node (a31) at (-0.5, -1) [vertex] {};
\node (a32) at (-0.5, -1.5) [vertex] {};
\node (a33) at (-0.5, -2) [vertex] {};
\node (a34) at (-0.5, -2.5) [vertex] {};
\node at (a34) [below = 0.3cm] {$u_4$};
\node (a41) at (0., -1) [vertex] {};
\node (a42) at (0., -1.5) [vertex] {};
\node (a43) at (0., -2) [vertex] {};
\node (a44) at (0., -2.5) [vertex] {};
\node at (a44) [below = 0.3cm] {$u_5$};
\node (a61) at (0.5, -1) [vertex] {};
\node (a62) at (0.5, -1.5) [vertex] {};
\node (a63) at (0.5, -2) [vertex] {};
\node (a64) at (0.5, -2.5) [vertex] {};
\node at (a64) [below = 0.3cm] {$u_6$};

\node (u) at (1.5, 0) [vertex] {};
\node at (u) [above = 0.2cm] {$v$};
\node (u1) at (2.125,-0.25) [vertex] {};
\node at (u1) [below = 0.2cm] {$v_1$};
\node (u2) at (2.75,-0.4) [vertex] {};
\node at (u2) [below = 0.2cm] {$v_2$};
\node (u3) at (3.375,-0.25) [vertex] {};
\node at (u3) [below = 0.2cm] {$v_3$};
\node (v) at (4, 0) [vertex] {};
\node at (v) [above left = 0.2cm] {$w$};
\node (v11) at (3.75, 1) [vertex] {};
\node (v12) at (3.75, 1.5) [vertex] {};
\node at (v12) [above = 0.3cm] {$w_1$};
\node (v21) at (4.25, 1) [vertex] {};
\node (v22) at (4.25, 1.5) [vertex] {};
\node at (v22) [above = 0.3cm] {$w_2$};
\node (v31) at (3.75, -1) [vertex] {};
\node (v32) at (3.75, -1.5) [vertex] {};
\node at (v32) [below = 0.3cm] {$w_3$};
\node (v41) at (4.25, -1) [vertex] {};
\node (v42) at (4.25, -1.5) [vertex] {};
\node at (v42) [below = 0.3cm] {$w_4$};

\node (w1) at (5.5, 0) [vertex] {};
\node (w2) at (7, 0) [vertex] {};

\node (x) at (8.5, 0) [vertex] {};
\node at (x) [above left = 0.2cm] {$x$};
\node (x11) at (8.25, 1) [vertex] {};
\node (x12) at (8.25, 1.5) [vertex] {};
\node at (x12) [above = 0.3cm] {$x_1$};
\node (x21) at (8.75, 1) [vertex] {};
\node (x22) at (8.75, 1.5) [vertex] {};
\node at (x22) [above = 0.3cm] {$x_2$};
\node (x31) at (8.25, -1) [vertex] {};
\node (x32) at (8.25, -1.5) [vertex] {};
\node at (x32) [below = 0.3cm] {$x_3$};
\node (x41) at (8.75, -1) [vertex] {};
\node (x42) at (8.75, -1.5) [vertex] {};
\node at (x42) [below = 0.3cm] {$x_4$};

\node (y1) at (10, 0) [vertex] {};
\node (y2) at (11.5, 0) [vertex] {};
\node (y3) at (13, 0) [vertex] {};
\node at (13,0) [above = 0.2cm] {$y$};

\draw (a14) -- (a13) -- (a12) -- (a11) -- (a) -- (a21) -- (a22) -- (a23) -- (a24);
\draw (a34) -- (a33) -- (a32) -- (a31) -- (a) -- (a41) -- (a42) -- (a43) -- (a44);
\draw (a54) -- (a53) -- (a52) -- (a51) -- (a) -- (a61) -- (a62) -- (a63) -- (a64);
\draw (a) -- (u) -- (v); 
\draw (v12) -- (v11) -- (v) -- (v21) -- (v22);
\draw (v32) -- (v31) -- (v) -- (v41) -- (v42);
\draw (v) -- (w1) -- (w2) -- (x);
\draw (a) -- (u) -- (v); 
\draw (x12) -- (x11) -- (x) -- (x21) -- (x22);
\draw (x32) -- (x31) -- (x) -- (x41) -- (x42);
\draw (x) -- (y1) -- (y2) -- (y3);
\draw (u) -- (u1) -- (u2) -- (u3) -- (v);

\end{tikzpicture}
\caption{The graph $G$ in Example \ref{ex:difference_two}}
\label{fig:difference_two}
\end{figure}

The following lemma is a useful consequence of Theorem \ref{thm:edge-removal}.

\begin{lemma}
    \label{lem:spanning-subgraph}
    If $H$ is a spanning subgraph of $G$, then $\bg(G) \leq \bg(H)$ and $\bg'(G) \le \bg'(H)$.  
\end{lemma}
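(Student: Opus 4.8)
The plan is to realize $H$ from $G$ by deleting the edges of $E(G)\setminus E(H)$ one at a time, and to apply the lower-bound halves of Theorem~\ref{thm:edge-removal} after each deletion. Write $E(G)\setminus E(H)=\{e_1,\dots,e_m\}$, put $G_0=G$ and $G_i=G_{i-1}-e_i$ for $1\le i\le m$, so that $G_m=H$ and each $G_i$ is a spanning subgraph of $G_{i-1}$. Theorem~\ref{thm:edge-removal} gives $\bg(G_{i-1})\le\bg(G_i)$ and $\bg'(G_{i-1})\le\bg'(G_i)$ for every $i$; chaining these inequalities yields $\bg(G)=\bg(G_0)\le\bg(G_m)=\bg(H)$, and likewise $\bg'(G)\le\bg'(H)$, which is the claim.

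The only point requiring care is that Theorem~\ref{thm:edge-removal} is stated for connected graphs, whereas the intermediate graphs $G_1,\dots,G_{m-1}$ need not be connected. There are two easy fixes. If $H$ happens to be connected, then every $G_i$ contains the spanning connected subgraph $H$ and is therefore itself connected, so the chain above applies verbatim. In general, one simply observes that the lower-bound inequalities $\bg(G_{i-1})\le\bg(G_i)$ and $\bg'(G_{i-1})\le\bg'(G_i)$ do not use connectivity: deleting an edge can only delay the spreading phase, so Burner can play his optimal strategy for the game on $G_i$ while the game is actually being played on $G_{i-1}$ --- he runs an imagined parallel game on $G_i$, copies his own moves into the real game (substituting an arbitrary legal move whenever the intended vertex is already burned, exactly as in the proof of Theorem~\ref{thm:continuation-principle}), and transfers Staller's real moves back into the imagined game. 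Because $N_{G_i}[S]\subseteq N_{G_{i-1}}[S]$ for every $S\subseteq V(G)$, the invariant that the burned set in the real game always contains the burned set in the imagined game is preserved through every spreading and selection phase, so the real game ends no later than the imagined one.

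This is essentially all bookkeeping, and I do not expect a genuine obstacle --- the connectivity caveat above is the only subtlety, and even that can be sidestepped entirely by running the imagined-game argument once, directly between $G$ and $H$ (using $N_H[S]\subseteq N_G[S]$) rather than inductively one edge at a time; that collapses the whole proof to a single application of the strategy-copying idea already used for the Continuation Principle.
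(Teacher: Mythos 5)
Your proposal is correct and follows the same route the paper intends: the paper states this lemma as "a useful consequence of Theorem~\ref{thm:edge-removal}" with no further proof, i.e.\ exactly your iterated edge-deletion chain using the lower-bound halves of that theorem. Your observation that the intermediate graphs need not be connected (and that the monotonicity can instead be obtained in one shot by the strategy-copying argument from the Continuation Principle, using $N_H[S]\subseteq N_G[S]$) is a legitimate and worthwhile clarification that the paper glosses over.
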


\subsection{Characterizations}
\label{sec:characterizations}

In this subsection, we give structural characterizations of graphs having small game burning numbers.

\begin{proposition}
    \label{prop:characterize-1-2}
    Let $G$ be a connected graph.
    \begin{enumerate}
        \item $\bg(G) = 1$ if and only if $G = K_1$;
        \item $\bg'(G) = 1$ if and only if $G = K_1$;
        \item $\bg(G) = 2$ if and only if $G \neq K_1$ and $\Delta(G) \geq |V(G)| - 2$; and
        \item $\bg'(G) = 2$ if and only if $G \neq K_1$ and $\delta(G) \geq |V(G)| - 2$ (if and only if $G$ is isomorphic to a complete graph on at least two vertices without a (possibly empty) matching).
    \end{enumerate}
\end{proposition}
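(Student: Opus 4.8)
The plan is to handle the four parts essentially separately, since each one is decided entirely by the first one or two rounds of the game. Two facts are used throughout: round $1$ consists only of a selection phase, so exactly one vertex is burned after round $1$ regardless of who moves first; and the game terminates at the end of a round (possibly after just its spreading phase) as soon as every vertex is burned, with the consequence that whenever an unburned vertex remains after a spreading phase, the player to move must burn one of the remaining vertices. Parts (1) and (2) are then immediate: after round $1$ exactly one vertex is burned, so the game ends in round $1$ if and only if $|V(G)|=1$; in particular, if $G\neq K_1$ then $\bg(G)\geq 2$ and $\bg'(G)\geq 2$.

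For part (3), I would assume $G\neq K_1$ (so $\bg(G)\geq 2$) and show $\bg(G)\leq 2$ iff $\Delta(G)\geq |V(G)|-2$. If some vertex $v$ has $d(v)\geq |V(G)|-2$, Burner plays $v$ in round $1$; after the spreading phase of round $2$ all of $N[v]$ is burned, so at most one vertex is unburned, and (if it exists) Staller is forced to burn it in the selection phase of round $2$, ending the game. (The case $\Delta(G)=|V(G)|-2$ is exactly the bound $\bg(G)\leq |V(G)|-\Delta(G)$ of Proposition~\ref{prop:Delta}, and $\Delta(G)=|V(G)|-1$ is the trivial universal-vertex case.) Conversely, if $\Delta(G)\leq |V(G)|-3$, then for whatever vertex $v$ Burner plays in round $1$ at least two vertices lie outside $N[v]$ and so remain unburned after the spreading phase of round $2$; Staller burns one of them, at least one unburned vertex remains, and round $3$ begins, so $\bg(G)\geq 3$.

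For part (4), I would mirror this with Staller moving first, the difference being that her round-$1$ choice is adversarial. Assuming $G\neq K_1$ (so $\bg'(G)\geq 2$): if $\delta(G)\geq |V(G)|-2$, then whichever vertex $v$ Staller plays in round $1$, $N[v]$ misses at most one vertex, which Burner burns in the selection phase of round $2$, ending the game in round $2$; if instead $\delta(G)\leq |V(G)|-3$, Staller plays a vertex $v$ of minimum degree, at least two vertices lie outside $N[v]$, Burner can remove only one of them in round $2$, and the game reaches round $3$, so $\bg'(G)\geq 3$. To get the stated reformulation I would then observe that $\delta(G)\geq |V(G)|-2$ says every vertex has at most one non-neighbour, i.e.\ $\Delta(\overline{G})\leq 1$, which holds precisely when $\overline{G}$ is a (possibly empty) matching together with isolated vertices; equivalently $G$ is a complete graph on $|V(G)|\geq 2$ vertices with a (possibly empty) matching removed.

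I do not expect a serious obstacle: the only points needing care are bookkeeping — tracking whose turn it is in round $2$ in each of the two games, using that a player must burn an unburned vertex when one is available (so the lone leftover vertex is indeed burned in round $2$), and separating the degenerate universal-vertex case from the case $\Delta(G)=|V(G)|-2$ in part (3) and from $\delta(G)=|V(G)|-2$ in part (4).
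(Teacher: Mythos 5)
Your proof is correct and follows essentially the same approach as the paper's: parts (1)--(2) from the single vertex burned in round 1, and parts (3)--(4) by analyzing whether $V(G)\setminus N[v]$ has at most one vertex after the first move, with the converse argued via the contrapositive (equivalent to the paper's direct argument). The only addition is your explicit verification of the ``complete graph minus a matching'' reformulation, which the paper states without proof.
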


\begin{proof}
    Let $|V(G)| = n$.
    \begin{enumerate}
        \item If $G = K_1$, then trivially $\bg(G) = 1$; since only one vertex is burned in the first round, clearly $\bg(G) > 1$ if $n \geq 2$.
        \item Same as above.
        \item If $\Delta(G) \geq n-2$, then Burner's strategy is to first play a vertex of degree at least $n-2$. In the second rounds, all vertices but at most one are burned, thus Staller has no other option but to play the remaining unburned vertex and so $\bg(G) \leq 2$. Since $G \neq K_1$, $\bg(G) \geq 2$.

        Now suppose that $\bg(G) = 2$. Suppose that, in an optimal strategy, Burner plays vertex $v$ in round 1.  The remaining vertices in $N[v]$ burn in the spreading phase of round 2.  It must be that $G - N[v]$ contains at most one vertex, since otherwise the game would not end after Staller's turn in round 2.  It follows that $\Delta(G) \geq \deg(v) \ge n-2$. Since $\bg(G) \neq 1$, $G \neq K_1$.

        \item If $G$ is a graph on at least two vertices with $\delta(G) \geq n-2$, then for every vertex $v \in V(G)$ it holds that $G-N[v]$ contains at most one vertex. Thus no matter what vertex Staller burns in round 1, at most one vertex remains unburned after the spreading phase of round 2, so Burner can finish the game with his ensuing move. Since $G \neq K_1$, $\bg'(G) = 2$.

        If $\bg'(G) = 2$, then $G \neq K_1$ and no matter which vertex Staller plays in round 1 starts, Burner can ensure that the game ends in the second time step. This means that every vertex of $G$ has at most one non-neighbor, thus $\delta(G) \geq n-2$. 
    \end{enumerate}
\end{proof}

\begin{proposition}
    \label{prop:characterize-3}
    Let $G$ be a connected graph. Then $\bg(G) = 3$ if and only if $\Delta(G) \leq |V(G)| - 3$ and there exists $v \in V(G)$ such that every vertex in $V(G-N[v])$ is adjacent to all but at most one vertex in $V(G-N_2[v])$.
\end{proposition}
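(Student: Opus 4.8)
The plan is to split the biconditional as $\bg(G) = 3 \iff \bg(G) \ge 3 \text{ and } \bg(G) \le 3$ and handle the two halves separately. The lower bound is immediate from Proposition~\ref{prop:characterize-1-2}: it tells us that $\bg(G) \le 2$ exactly when $G = K_1$ or $\Delta(G) \ge |V(G)| - 2$, so for a connected graph $\bg(G) \ge 3$ is equivalent to $\Delta(G) \le |V(G)| - 3$ (this inequality forces $|V(G)| \ge 4$, so the $K_1$ case drops out automatically). The real work is to show that, assuming $\Delta(G) \le |V(G)| - 3$, the inequality $\bg(G) \le 3$ holds if and only if some vertex $v$ has the stated neighborhood property; this I would do by simply tracing the first three rounds of the game.

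Concretely, suppose \fastplayer opens with a vertex $v$ (every strategy begins with \emph{some} first move, so there is no loss in naming it). After the spreading phase of round~2 the burned set is exactly $N[v]$; since $\Delta(G) \le |V(G)| - 3$ the set $V(G) \setminus N[v]$ has at least two vertices, so \slowplayer is forced to play some $s \in V(G) \setminus N[v]$ in round~2, leaving $N[v] \cup \{s\}$ burned. After the spreading phase of round~3 the burned set is $N[N[v] \cup \{s\}] = N_2[v] \cup N[s]$, and then \fastplayer makes one more move. Hence the game ends no later than round~3 precisely when $\bigl| V(G) \setminus (N_2[v] \cup N[s]) \bigr| \le 1$, in which case \fastplayer burns the at most one surviving vertex. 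Since \slowplayer is adversarial and may pick \emph{any} $s \in V(G)\setminus N[v]$ — in particular she may "waste" a move on a vertex already inside $N_2[v]$ — we get that $\bg(G) \le 3$ if and only if there is a vertex $v$ with $\bigl| V(G) \setminus (N_2[v] \cup N[s]) \bigr| \le 1$ for every $s \in V(G) \setminus N[v]$. Finally $V(G) \setminus (N_2[v] \cup N[s]) = V(G - N_2[v]) \setminus N[s]$, so this says exactly that every vertex of $V(G - N[v])$ is adjacent to all but at most one vertex of $V(G - N_2[v])$ (the condition being vacuously true when $N_2[v] = V(G)$, which matches the fact that then the fire has already covered $G$ after round~3's spreading phase).

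Assembling the pieces: for "$\Leftarrow$", given a vertex $v$ with the neighborhood property \fastplayer opens at $v$, and the computation above shows every play of \slowplayer leaves at most one vertex alive after round~3's spreading phase, so $\bg(G) \le 3$; together with $\bg(G) \ge 3$ from Proposition~\ref{prop:characterize-1-2} this gives $\bg(G) = 3$. For "$\Rightarrow$", take $v$ to be \fastplayer's first move in an optimal (length-$3$) strategy; the same computation forces the neighborhood property for this $v$, and $\bg(G) = 3$ gives $\Delta(G) \le |V(G)| - 3$ via Proposition~\ref{prop:characterize-1-2}. The one point that needs care is the bookkeeping of the burned set after each spreading phase, combined with the observation that \slowplayer's round-$2$ move must be quantified over all of $V(G) \setminus N[v]$ rather than only over $V(G - N_2[v])$ — a vertex $s \in N_2[v] \setminus N[v]$ failing to dominate $V(G - N_2[v])$ up to one vertex would let \slowplayer prolong the game, which is exactly why the property is imposed on every vertex of $V(G - N[v])$. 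Beyond this careful case analysis I do not anticipate any genuine obstacle, the argument running parallel to the proof of the $\bg(G) = 2$ characterization in Proposition~\ref{prop:characterize-1-2}.
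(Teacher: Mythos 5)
Your proposal is correct and follows essentially the same route as the paper's proof: the $\Delta(G)\le|V(G)|-3$ condition comes from the characterization of $\bg(G)=2$, and the neighborhood condition is obtained by tracing the burned set through the first three rounds with Burner opening at $v$ and quantifying over all of Staller's possible replies in $V(G)\setminus N[v]$. Your write-up is in fact somewhat more explicit than the paper's about the bookkeeping (e.g.\ identifying the burned set after round 3's spreading phase as $N_2[v]\cup N[s]$), but the argument is the same.
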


\begin{proof}
    Suppose first that $\bg(G) = 3$. Then there exists an optimal first move of Burner, $v \in V(G)$, such that no matter what Staller plays during her subsequent turn, at most one unburned vertex remains after the spreading phase of round $3$. Thus every vertex in $V(G) - N[v]$ -- that is, every possible Staller move -- is adjacent to all but at most one vertex not in $N_2[v]$ (and Burner plays this vertex in round 3). Additionally, since $\bg(G) = 3$, by Proposition \ref{prop:characterize-1-2} we know that $\Delta(G) \leq |V(G)| - 3$. 

    Now suppose that both conditions from the statement are satisfied. Burner's strategy is to first play $v$ in round 1.  No matter where Staller plays in round 2, at most one vertex remains unburned after the spreading phase of round 3; Burner plays this vertex (if it exists) with his ensuing turn.
\end{proof}

\begin{proposition}
\label{prop:b=bg_diam2}
If $G$ has diameter at most $2$, then $b(G)=\bg(G)$.
\end{proposition}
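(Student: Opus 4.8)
The plan is to combine two facts already in the excerpt: the universal bounds $b(G)\le\bg(G)$ from Proposition~\ref{prop:trivial-bounds} and $\bg(G)\le\rad(G)+1$ from Proposition~\ref{prop:radius} (a graph of diameter at most $2$ is connected, so both apply). Since $\rad(G)\le\diam(G)\le 2$, this gives $b(G)\le\bg(G)\le\rad(G)+1\le 3$, so both $b(G)$ and $\bg(G)$ lie in $\{1,2,3\}$. It therefore suffices to show $\bg(G)\le b(G)$, and I would argue this by cases on the value of $b(G)$.

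If $b(G)=3$, then $3=b(G)\le\bg(G)\le 3$ forces $\bg(G)=3=b(G)$, and nothing more is needed. If $b(G)=1$, then $G=K_1$ (the first source burns only itself in round~$1$), and $\bg(G)=1$ as well. The remaining case is $b(G)=2$: then $G\neq K_1$, and taking an optimal burning sequence $x_1,x_2$, the vertex $v=x_1$ satisfies $\lvert V(G)\setminus N[v]\rvert\le 1$, so $\Delta(G)\ge\deg(v)\ge\lvert V(G)\rvert-2$. By Proposition~\ref{prop:characterize-1-2}(3), this is precisely the condition characterizing $\bg(G)=2$, hence $\bg(G)=2=b(G)$.

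There is essentially no obstacle here: once one observes that $\diam(G)\le 2$ pins both parameters into $\{1,2,3\}$ via the radius bound, the claimed equality reduces to a finite case check, and the only nontrivial case ($b(G)=2$) is handled verbatim by the earlier characterization of graphs with $\bg(G)=2$. The one point to be careful about is that Proposition~\ref{prop:characterize-1-2}(3) explicitly excludes $K_1$, so the cases $b(G)=1$ and $b(G)=2$ should be kept separate rather than merged into a single statement about $\Delta(G)$.
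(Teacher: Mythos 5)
Your proof is correct and takes essentially the same approach as the paper: both arguments use $\diam(G)\le 2$ to pin $b(G)$ and $\bg(G)$ into a range of at most three values and then invoke the characterization $\bg(G)=2 \iff G\neq K_1$ and $\Delta(G)\ge |V(G)|-2$ from Proposition~\ref{prop:characterize-1-2}. The only difference is organizational: you case on the value of $b(G)$ and deduce $\bg(G)$ (which makes the value-$3$ case immediate from the sandwich $3=b(G)\le\bg(G)\le 3$), whereas the paper cases on $\bg(G)$ and deduces $b(G)$.
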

\begin{proof}
If the diameter of $G$ is $1$, then $G$ is a complete graph on at least two vertices for which $b(G)=\bg(G)=2$.

Now suppose that the diameter is $2$, hence every burning process, whether it is the usual burning or the burning game, will finish in at least $2$ and in at most $3$ rounds. By Proposition~\ref{prop:characterize-1-2}, $\bg(G)=2$ if and only if $G \neq K_1$ and $\Delta(G) \geq |V(G)| - 2$. We already know that $G \neq K_1$ because of the diameter being $2$, hence $G$ must be a complete graph on at least two vertices without a matching. In such a graph we can burn a vertex that spreads to all but one vertex in the second round. Hence in the second round we only have one vertex to choose for the second source, i.e.\ $b(G)=2$.

In the case of $\bg(G)=3$ the graph $G$ must have $\Delta(G) \leq |V(G)| - 3$. No matter how we select the source in the first round, in the second round, after the spread, at least two vertices will remain unburned. We have to chose one of them for the second source, and the third round has to start in order to burn the remaining vertex (or vertices). Hence, $b(G)=3$.
\end{proof}

\section{Nordhaus–Gaddum type results}
\label{sec:nordhaus-gaddum}

In this section we prove upper and lower bounds for the sum and product of the game burning number of a graph and its complement. The analogous question for the chromatic number was studied in the 1950s by Nordhaus and Gaddum \cite{nordhaus1956NG} and has later been studied for other graph parameters as well, see for example \cite{aouchiche2013NG}. This type of result was also studied for graph burning by Bonato et al. \cite[Section 3]{bonato2016burn}; in particular, they showed that for any $n$-vertex graph $G$ with $n \ge 2$, we have $4 \le b(G) + b(\overline{G}) \le n+2$ and $4 \le b(G)b(\overline{G}) \le 2n$; if additionally both $G$ and $\overline{G}$ are connected, then $b(G)b(\overline{G}) \le n+6$. 

The bounds established by Bonato et al. for $b(G) + b(\overline{G})$ in fact apply to $\bg(G) + \bg(\overline{G})$ as well, with essentially the same proof.

\begin{proposition}
\label{prop:sum_gen}
If $G$ is a graph of order $n$, $n\geq 2$, then $$4\le \bg(G) + \bg(\overline{G})\le n+2.$$
\end{proposition}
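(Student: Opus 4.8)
The plan is to treat the two inequalities separately. The lower bound is immediate, and the upper bound follows the Bonato et al.\ argument for $b(G)+b(\overline G)$, with Proposition~\ref{prop:Delta} playing the role of the corresponding upper bound for the burning number.

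For the lower bound: in the first round of the burning game only the single selected vertex becomes burned, so whenever a graph $H$ has at least two vertices the game cannot end in round~$1$ and hence $\bg(H)\ge 2$. Applying this to $G$ and to $\overline G$ (both of order $n\ge 2$) gives $\bg(G)+\bg(\overline G)\ge 4$.

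For the upper bound, I would first record the trivial bound $\bg(H)\le |V(H)|$, valid for every graph $H$: in any round in which an unburned vertex still exists, the selection phase burns a new vertex, so all vertices are burned within $|V(H)|$ rounds. (This is needed because the Nordhaus--Gaddum setting forces us to allow one of $G,\overline G$ to be disconnected.) Then I would split into cases on the maximum degree. If $\Delta(G)=n-1$, then $G\neq K_1$ has a universal vertex, so $\bg(G)=2$ by Proposition~\ref{prop:characterize-1-2}, and combined with $\bg(\overline G)\le n$ this yields $\bg(G)+\bg(\overline G)\le n+2$; the case $\delta(G)=0$ is symmetric (it is exactly the case $\Delta(\overline G)=n-1$). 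In the remaining case $1\le\delta(G)\le\Delta(G)\le n-2$, both $G$ and $\overline G$ meet the hypothesis of Proposition~\ref{prop:Delta}, so $\bg(G)\le n-\Delta(G)$ and $\bg(\overline G)\le n-\Delta(\overline G)$; substituting $\Delta(\overline G)=n-1-\delta(G)$ and using $\delta(G)\le\Delta(G)$ gives
\[
\bg(G)+\bg(\overline G)\ \le\ \bigl(n-\Delta(G)\bigr)+\bigl(1+\delta(G)\bigr)\ \le\ n+1\ \le\ n+2 .
\]

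I do not expect a genuine obstacle. The only points that need care are: checking that the hypothesis $\Delta\le n-2$ of Proposition~\ref{prop:Delta} actually holds before invoking it (which is why graphs with a universal vertex are peeled off first), and remembering that $\bg$ is defined also for disconnected graphs, so that $\bg(H)\le |V(H)|$ may legitimately be applied to whichever of $G,\overline G$ is disconnected. Finally, the bound is tight: for $G=K_n$ we have $\bg(K_n)=2$ while $\overline G$ is edgeless, so $\bg(\overline G)=n$ and the sum equals $n+2$.
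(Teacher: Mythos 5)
Your proof is correct and follows essentially the same route as the paper, which simply adapts the degree-based case analysis of Bonato et al.\ (Theorem~18 of their paper) using Proposition~\ref{prop:Delta} and the universal-vertex case from Proposition~\ref{prop:characterize-1-2}. The details check out: the hypotheses $\Delta(G)\le n-2$ and $\Delta(\overline G)=n-1-\delta(G)\le n-2$ are verified before invoking Proposition~\ref{prop:Delta}, and the trivial bound $\bg(H)\le|V(H)|$ correctly covers the possibly disconnected complement.
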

\begin{proof}
    The proof goes along the lines of the proof of Theorem~18 in~\cite{bonato2016burn}.
\end{proof}

Our bounds on $\bg(G)\bg(\overline{G})$ are also identical to the bounds on $b(G)b(\overline{G})$ from \cite{bonato2016burn}; however, establishing the bounds on $\bg(G)\bg(\overline{G})$ takes a bit more work.  We begin with a helpful lemma.

\begin{lemma}
    \label{lem:disconnected-3}
    If $G$ is not connected and all components of $G$ have order at least 3, then $\bg(G) \leq \frac{n+1}{2}$ and $\bg'(G) \leq \frac{n}{2}+ 1$.
\end{lemma}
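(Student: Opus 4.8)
The plan is to exhibit a single strategy for \fastplayer and to track how fast the number of burned vertices grows under it. Write $C_1,\dots,C_r$ for the components of $G$, so $r\ge 2$ and $|C_i|\ge 3$ for all $i$. Two elementary facts will be used repeatedly: since each $C_i$ is connected on at least three vertices, it contains a vertex of degree $\ge 2$ (hence $\Delta(G)\ge 2$), and once a component has at least one burned and at least one unburned vertex it necessarily contains an edge joining a burned vertex to an unburned one (a \emph{frontier edge}).

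For the \fastplayer-start game I would let \fastplayer play as follows: in round $1$, burn a vertex of maximum degree; in each later round, burn a maximum-degree vertex of some still entirely unburned (\emph{fresh}) component if one exists, and otherwise burn an arbitrary unburned vertex. The crucial claim is that under this strategy at least one new vertex burns during the spreading phase of every round after the first, with the sole exception of certain ``dead'' rounds, each of which is followed immediately by a round whose spreading phase burns at least two new vertices. Indeed, a spreading phase fails to burn anything only if the previous round ended with no frontier edge; and a frontier-free position with unburned vertices remaining must consist of completely burned components together with one or more fresh components. \fastplayer's strategy never produces such a position right after one of his own moves (he either opens a fresh component, which creates a frontier, or there is no fresh component and he is only finishing off partially burned ones), so a dead round can occur only just after \slowplayer finishes off the last partially burned component while a fresh component still remains; in that case \fastplayer's next move opens a fresh component whose seed has degree $\ge 2$, forcing $\ge 2$ vertices to burn during the following spreading phase. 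Now let $u(t)$ be the number of burned vertices after round $t$. Then $u(1)=1$, the degree-$\ge 2$ opening move gives $u(2)\ge 4$, and for $3\le t<\bg(G)$ we get $u(t)\ge u(t-1)+2$ (one vertex from spreading, one from the selection) except that a dead round yields only $u(t)\ge u(t-1)+1$, with the next round then yielding $u(t+1)\ge u(t)+3$. Hence $u(t)\ge 2t$ whenever $2\le t<\bg(G)$ and round $t$ is not dead, and $u(t)\ge 2t-1$ when it is; examining $t=\bg(G)-1$ (and, in the dead case, also using that the spreading phase of round $\bg(G)$ still burns $\ge 2$ vertices) forces $\bg(G)\le\ceil{n/2}\le\frac{n+1}{2}$.

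For the \slowplayer-start game I would use $\bg'(G)=1+\bg(G\mid N[s])$, where $s$ is \slowplayer's optimal opening move, and rerun the argument for the \fastplayer-start game on $G\mid N[s]$. The effect of $N[s]$ being burned from the start is that $|N[s]|\ge 2$ vertices are already accounted for while \fastplayer still gets to make his round-$1$ move; the count is thereby shifted up, giving $u(2)\ge 6$ and, as above, $\bg(G\mid N[s])\le\ceil{n/2}-1$, whence $\bg'(G)\le\ceil{n/2}\le\frac{n}{2}+1$.

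I expect the main difficulty to be the ``the fire never dies'' claim and its bookkeeping, i.e.\ pinning down exactly when the board can become frontier-free while unburned vertices remain and checking that \fastplayer's strategy keeps this under control. This is precisely where both hypotheses are needed: disconnectedness forces the players to open new components (and hence light new fires), while $|C_i|\ge 3$ guarantees that every opening move and every partially burned component carries a frontier edge. The remaining steps are routine, although the exact constants $\frac{n+1}{2}$ and $\frac{n}{2}+1$ call for a little care in the endgame analysis and in the small cases (where $\bg(G)\le 2$ or $\bg'(G)\le 3$ settle matters at once since $n\ge 6$).
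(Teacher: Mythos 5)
Your proof is correct, but it runs on a different engine from the paper's. The paper has \fastplayer greedily burn, on each of his turns, \emph{any} vertex with at least two unburned neighbors; this single local condition yields at least four newly burned vertices over each pair of consecutive rounds, and the hypothesis that every component has order at least $3$ enters only in the terminal analysis (once no such vertex exists, every component already carries fire and the game ends within two more rounds, after which a short case split on the final round gives $\bg(G)\le\frac{n+1}{2}$). You instead have \fastplayer open fresh components with degree-$\ge 2$ seeds and drive the count through frontier persistence: every spreading phase burns at least one vertex unless the position is frontier-free, and you argue such dead positions can arise only after a \slowplayer move and are immediately repaid by a spreading phase burning at least two vertices. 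Both arguments amortize to the same rate of two vertices per round and the same endgame inequality $n\ge 2m-1$; yours makes the roles of disconnectedness and of the order-$\ge 3$ hypothesis more explicit (fresh components must keep being opened, and every opening move leaves a frontier), at the cost of a more delicate bookkeeping around dead rounds -- which you correctly identify as the crux, and which does close: dead rounds are always \fastplayer's, the last round is never dead, and each dead round is compensated in the next. Your \slowplayer-start treatment via $\bg'(G)=1+\bg(G\mid N[s])$ also differs from the paper, which simply reruns the same \fastplayer strategy after \slowplayer's opening move; carried through, your shifted count actually yields the marginally stronger bound $\bg'(G)\le\ceil{n/2}$, which of course implies the stated $\bg'(G)\le\frac{n}{2}+1$.
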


\begin{proof}
    We give a strategy for Burner to ensure that, for most of the game, at least four vertices are burned within each pair of consecutive rounds.  On each of Burner's turns, if any vertex $v$ has two or more unburned neighbors, then Burner burns $v$.  This ensures that least three more vertices will be burned in the subsequent round (at least two during the spreading phase, and one more chosen by Staller), for a total of at least four vertices burned within two rounds.  Suppose instead that, on Burner's turn, every unburned vertex has at most one unburned neighbor.  Since every component of $G$ has order at least 3, it follows that every component of $G$ has at least one burning vertex; moreover, each unburned vertex either has a burning neighbor, or is a leaf whose unburned neighbor itself has a burning neighbor.  Consequently, the game will end within the next two rounds, regardless of how the players play.

    Suppose that Burner can burn a vertex with at least two unburned neighbors for his first $k$ turns.  It follows that at least $4k$ vertices burn during the first $2k$ rounds and that the game will end before the end of round $2k+3$.  In addition, at least one vertex must have burned during the spreading phase of round $2k+1$.  If the game ends in round $2k+1$, then at least $4k+1$ vertices have burned during $2k+1$ rounds, hence $\bg(G) \le 2k+1 = \frac{4k+1}{2} + \frac12 \le \frac{n+1}{2}$.  If instead the game ends in round $2k+2$, then Burner must have burned another vertex in round $2k+1$, and at least one more burns during the spreading phase of round $2k+2$; hence $\bg(G) \le 2k+2 = \ceil{\frac{4k+3}{2}} \le \frac{n+1}{2}$.  Finally, if the game ends in round $2k+3$, then Staller must have burned another vertex in round $2k+2$, and at least one more must have burned in the spreading phase of round $2k+3$, so $\bg(G) \le 2k+3 \le \ceil{\frac{4k+5}{2}} \le \frac{n+1}{2}$. 

    In the Staller-start game, after the first move of Staller, Burner uses the same strategy as above. Thus after the first $2k+1$ moves at least $4k+1$ vertices are burned (since at least one vertex is burned in round 1). With similar arguments as above we obtain that $\bg'(G) \leq \frac{n}{2} + 1$.
\end{proof}

\begin{proposition}
    \label{prop:product_gen}
    If $G$ is a graph of order $n$ with $n \geq 2$, then $$4 \leq \bg(G) \bg(\overline{G}) \leq 2n.$$ 
\end{proposition}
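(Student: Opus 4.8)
I would prove the two inequalities separately. For the lower bound, note that since $n\ge 2$ neither $G$ nor $\overline{G}$ is $K_1$, and the argument of Proposition~\ref{prop:characterize-1-2}(1) (which only uses that round~1 has no spreading phase, not connectivity) gives $\bg(G)\ge 2$ and $\bg(\overline{G})\ge 2$; hence $\bg(G)\bg(\overline{G})\ge 4$. For the upper bound I would first record two trivial facts: (i) $\bg(H)\le |V(H)|$ for every graph $H$, since each round burns at least the selected vertex; and (ii) if $\bg(G)=2$ or $\bg(\overline{G})=2$, then $\bg(G)\bg(\overline{G})\le 2n$ by (i). So from now on assume $\bg(G),\bg(\overline{G})\ge 3$. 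By Proposition~\ref{prop:characterize-1-2}(3) this forces $\Delta(G)\le n-3$ and $\Delta(\overline{G})\le n-3$, equivalently $\delta(\overline{G})\ge 2$ and $\delta(G)\ge 2$; in particular $n\ge 5$, and since a connected graph of minimum degree at least $2$ has at least three vertices, every component of $G$ and of $\overline{G}$ has order at least $3$.

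Next I would split according to the connectivity of $G$, using that at least one of $G,\overline{G}$ is connected. Suppose $G$ is disconnected. Then any two $\overline{G}$-nonadjacent vertices lie in a common component of $G$, hence have a common $\overline{G}$-neighbour in any other component, so $\diam(\overline{G})\le 2$ and $\bg(\overline{G})\le\rad(\overline{G})+1\le 3$ by Proposition~\ref{prop:radius}; combined with $\bg(\overline{G})\ge 3$ this gives $\bg(\overline{G})=3$. Since every component of $G$ has order at least $3$, Lemma~\ref{lem:disconnected-3} yields $\bg(G)\le\frac{n+1}{2}$, so $\bg(G)\bg(\overline{G})\le\frac{3(n+1)}{2}\le 2n$ because $n\ge 3$. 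The case where $\overline{G}$ is disconnected is identical with the roles of $G$ and $\overline{G}$ exchanged.

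The remaining case is when both $G$ and $\overline{G}$ are connected, and this is the heart of the matter. Here I would use the standard observation that if $\diam(G)\ge 3$, witnessed by vertices $u,v$ with $d_G(u,v)\ge 3$, then for every $w\in N_G(u)$ the vertex $v$ is a non-neighbour in $G$ of both $u$ and $w$, so $u$ has eccentricity at most $2$ in $\overline{G}$ and therefore $\rad(\overline{G})\le 2$. Consequently, since $G$ is connected, either $\diam(G)\le 2$ (so $\bg(G)\le\rad(G)+1\le 3$) or $\diam(G)\ge 3$ (so $\bg(\overline{G})\le\rad(\overline{G})+1\le 3$); in either case $\min\{\bg(G),\bg(\overline{G})\}\le 3$, and since both are at least $3$ this minimum equals $3$. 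Assuming without loss of generality $\bg(\overline{G})=3$, I would bound the other factor by $\bg(G)\le\rad(G)+1\le\floor{n/2}+1$, using the textbook bound $\rad(G)\le\floor{n/2}$ for connected $n$-vertex graphs (bound $\rad(G)$ by the radius of a spanning tree $T$, which equals $\ceil{\diam(T)/2}\le\ceil{(n-1)/2}=\floor{n/2}$). A direct check shows $\floor{n/2}+1\le\frac{2n}{3}$ for all $n\ge 5$ (the value $n=4$ does not occur, since $\delta(G),\delta(\overline{G})\ge 2$ is impossible on four vertices), whence $\bg(G)\bg(\overline{G})=3\,\bg(G)\le 2n$.

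The main obstacle is precisely this last case: the trivial estimate $\bg(G)\le n$ only yields the weaker bound $3n$, so one genuinely needs both the complement--radius dichotomy $\min\{\bg(G),\bg(\overline{G})\}\le 3$ and a bound linear in $n/2$ on the surviving factor. I would isolate the complement--radius fact (if $\diam(G)\ge 3$ then $\rad(\overline{G})\le 2$) as a short standalone claim, since it is the crux; everything else is either trivial or a direct application of Lemma~\ref{lem:disconnected-3}.
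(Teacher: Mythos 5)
Your proof is correct, and in the decisive case it takes a genuinely different and cleaner route than the paper. The paper splits on $\Delta(G)\ge n-2$ versus $\Delta(G)\le n-3$, and in the latter case, when $G$ and $\overline{G}$ are both connected, it uses only the weaker complement fact $\diam(\overline{G})\ge 3\Rightarrow\diam(G)\le 3$; this yields the bound $16\le 2n$ only for $n\ge 8$ in one subcase and $\frac{3n+6}{2}\le 2n$ only for $n\ge 6$ in the other, so it must finish $4\le n\le 7$ by computer check. Your symmetric reduction to $\bg(G),\bg(\overline{G})\ge 3$ forces $\Delta\le n-3$ (equivalently $\delta\ge 2$) on \emph{both} sides, which buys you $n\ge 5$, lets you invoke Lemma~\ref{lem:disconnected-3} for whichever of the two graphs is disconnected (thereby also covering the subcase ``$G$ disconnected, $\overline{G}$ connected'' that the paper's case split passes over), and, crucially, your sharper observation that $\diam(G)\ge 3$ implies $\rad(\overline{G})\le 2$ pins down $\min\{\bg(G),\bg(\overline{G})\}=3$ exactly, so that $3(\floor{n/2}+1)\le 2n$ closes the connected case for every admissible $n$ with no computational verification. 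Two small points of hygiene: Proposition~\ref{prop:characterize-1-2} is stated for connected graphs, so when you use part (3) in the form ``$\Delta(G)\ge n-2\Rightarrow\bg(G)\le 2$'' on a possibly disconnected $G$ or $\overline{G}$, you should note (as you did for part (1)) that this direction of the argument never uses connectivity; and you should state explicitly that Proposition~\ref{prop:radius} is always applied to the connected member of the pair. Neither affects correctness.
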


\begin{proof}
    As $n \geq 2$, $\bg(G) \bg(\overline{G}) \geq 4$ and this sets the lower bound. 
    
    For the upper bound, if $\Delta(G) \geq n-2$ then, by Proposition~\ref{prop:characterize-1-2} we have $\bg(G) = 2$, and since clearly $\bg(\overline{G}) \leq n$ we get $\bg(G) \bg(\overline{G}) \leq 2n$. 
    
    If $\Delta(G) \leq n-3$, then $\delta(\overline{G})\geq 2$ implying that each component of $\overline{G}$ is of order at least $3$ and note that $n\geq 4$.
    Suppose first that $\overline{G}$ is not connected and let $u, v\in V(G)$. If $u$ and $v$ are in different components of $\overline{G}$, then $uv \in E(G)$ and $d_G(u, v) = 1$. On the other hand, if $u, v$ are in the same component of $\overline{G}$, then in $G$ they are both adjacent to some vertex in a different component. Thus $d_G(u, v) \leq 2$ and we have that $\diam(G) \leq 2$. Proposition \ref{prop:radius} implies that $\bg(G) \leq 3$ and Lemma \ref{lem:disconnected-3} gives $\bg(\overline{G}) \leq \frac{n+1}{2}$. Thus $\bg(G) \bg(\overline{G}) \leq 3 \frac{n+1}{2} \leq 2n$ for all $n \geq 3$.
   
   Suppose now that $G$ and $\overline{G}$ are both connected.
   Without loss of generality we may assume that $\bg(G) \geq \bg(\overline{G})$. If $\diam(\overline{G}) \geq 3$, then $\diam(G) \leq 3$, thus $\bg(G) \leq 4$ by Proposition \ref{prop:radius}. By the assumption this means that $\bg(\overline{G}) \leq 4$ as well, so $\bg(G) \bg(\overline{G}) \leq 16 \leq 2n$ for every $n \geq 8$. 
    Otherwise, $\diam(\overline{G}) \leq 2$, thus by Proposition \ref{prop:radius} we have $\bg(\overline{G}) \leq 3$. Since $G$ is connected and $\rad(G) \leq \floor{\frac{n}{2}}$, Proposition \ref{prop:radius} gives $\bg(G) \leq \frac{n}{2} + 1$. Thus $\bg(G) \bg(\overline{G}) \leq 3 (\frac{n}{2} + 1) = \frac{3n+6}{2} \leq 2n$ for every $n \geq 6$.
    For $4\leq n\leq 7$ a computer check yields $\bg(G)\bg(\overline{G})\leq 2n$. 
\end{proof}

The lower bound in Proposition \ref{prop:product_gen} is tight e.g. when $G = K_{1,n-1}$ and the upper bound is tight e.g. when $G = K_{1, n-1}$ (both for $n \geq 2$). When both $G$ and $\overline{G}$ are connected, the upper bound can in fact be improved, as we next show. But first, we recall some definitions and prove some auxiliary results.

Let $k \geq 1$. A set $S \subseteq V(G)$ is a \emph{distance $k$-dominating set} of $G$ if every vertex of $G$ is within distance $k$ from some vertex in $S$. More formally, for every $v \in V(G)$ it holds $d(v, S) \leq k$. The minimum cardinality of a distance $k$-dominating set of $G$ is the \emph{distance $k$-domination number} $\gamma_k(G)$ of $G$. See for example \cite{henning2020book} for a survey of existing results.

\begin{proposition}
    \label{lem:k-dom}
    If $G$ is a graph, then $\bg(G) \leq \min_{k \geq 1} \{ 2 \gamma_k(G) + k - 1\}$ and $\bg'(G) \leq \min_{k \geq 1} \{ 2 \gamma_k(G) + k\}$.
\end{proposition}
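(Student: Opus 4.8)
The plan is to generalize the argument behind the bound $\bg(G) \le 2b(G^2)-1$ in Proposition~\ref{prop:trivial-bounds}, replacing the optimal burning sequence of $G^2$ by a minimum distance $k$-dominating set. Fix $k \ge 1$ and let $S = \{s_1,\dots,s_m\}$ be a distance $k$-dominating set of $G$ with $m = \gamma_k(G)$. Burner's strategy in the Burner-start game is to play $s_i$ on his $i$th turn if that vertex is still unburned, and otherwise to play an arbitrary unburned vertex; by the remark following the Continuation Principle, we need not worry about whether such moves are legal.

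First I would check, by an easy induction on $i$, that after Burner's $i$th turn all of $s_1,\dots,s_i$ are burned: the base case is immediate, and in the inductive step Burner either plays $s_i$ himself or finds it already burned. Since Burner's $i$th turn occurs in round $2i-1$ of the Burner-start game, $s_i$ is burned by the end of round $2i-1$ for every $i \le m$. Now fix any vertex $v$; by the defining property of $S$ there is an index $i \le m$ with $\dist(v,s_i) \le k$. Once $s_i$ is burning, the fire reaches $v$ within $k$ further rounds (one step of spreading per round), so $v$ is burned by the end of round $(2i-1)+k \le 2m+k-1$. As $v$ was arbitrary, the whole graph is burned by round $2\gamma_k(G)+k-1$ regardless of Staller's play; hence $\bg(G) \le 2\gamma_k(G)+k-1$, and taking the minimum over $k \ge 1$ yields the first inequality.

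For the Staller-start bound I would use the identical Burner strategy, noting only that his $i$th turn now falls in round $2i$ (since Staller moves in odd rounds). Thus $s_i$ is burned by the end of round $2i$, every vertex within distance $k$ of $s_i$ is burned by the end of round $2i+k \le 2m+k$, and therefore $\bg'(G) \le 2\gamma_k(G)+k$ for every $k \ge 1$, giving the second inequality.

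There is essentially no hard step here. The only points that need a little care are that the game may terminate before Burner has played all of $S$ (which only helps, since the bound asserts that all vertices are burned by a fixed round) and that some prescribed Burner move might be illegal (already handled by the Continuation Principle). One must also keep the ``spreading occurs at the start of each round'' convention straight when passing from ``$s_i$ burned by round $2i-1$'' to ``$v$ burned by round $2i-1+k$'', but this is routine bookkeeping.
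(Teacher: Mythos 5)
Your proof is correct and follows essentially the same route as the paper's: Burner plays the vertices of a minimum distance $k$-dominating set on his successive turns (rounds $2i-1$ in the Burner-start game, $2i$ in the Staller-start game), and every vertex burns within $k$ further rounds of its dominating vertex. Your version merely spells out the per-vertex timing and the Continuation-Principle justification for possibly-illegal moves a bit more explicitly than the paper does.
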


\begin{proof}
    Let $D_k = \{x_1, \ldots, x_\ell\}$ be a distance $k$-dominating set of $G$ where $\ell = \gamma_k(G)$. Burner's strategy is to play vertices $x_1, \ldots, x_\ell$ in his turns (in rounds $1$, \ldots, $2 \ell - 1$). Because every vertex of $G$ is within distance $k$ of some $x_i$, after $k$ additional rounds all vertices of $G$ are burned. Thus $\bg(G) \leq 2 \ell - 1 + k = 2 \gamma_k(G) + k - 1$ and $\bg'(G) \leq 2 \ell + k = 2 \gamma_k(G) + k$.
\end{proof}

\begin{proposition}
    \label{prop:k-dom-connected}
    If $G$ is a connected graph, then $\bg(G) \leq \min_{k \geq 1} \{\gamma_k(G) + 3k\}$ and $\bg'(G) \leq \min_{k \geq 1} \{\gamma_k(G) + 3k + 1\}$.
\end{proposition}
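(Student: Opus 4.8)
The plan is to mimic the strategy from Proposition~\ref{lem:k-dom}, but to exploit connectedness so that Burner can afford to play a $k$-dominating set one vertex per *pair of his turns is too slow* — instead, the key observation is that because $G$ is connected, once Burner has burned even a single vertex of the distance-$k$-dominating set, the fire will spread through $G$ on its own, so Burner need not "waste" moves ferrying the fire between the dominating vertices. More precisely, fix $k \geq 1$ and let $D_k = \{x_1, \dots, x_\ell\}$ be a minimum distance-$k$-dominating set, $\ell = \gamma_k(G)$. Since $G$ is connected, we may order the $x_i$ so that each $x_{i+1}$ is within distance $3$ (in fact, one can arrange this by walking along a spanning tree / BFS structure) of the set $\{x_1, \dots, x_i\}$ — here the constant $3$ is what produces the "$3k$" term, and I expect the clean way to get it is: take a spanning tree $T$ of $G$, root it, and process the $x_i$ in an order consistent with a tree traversal so that consecutive ones are joined by a short tree path whose length is controlled.

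The strategy for Burner in the Burner-start game is then: on turn $1$ play $x_1$; thereafter, on each of his turns, if the fire has not yet reached the "next" dominating vertex $x_{i}$, Burner plays along the shortest path toward it (or plays $x_i$ directly once he is within reach), and otherwise he moves on to $x_{i+1}$. The accounting I would carry out: burning $x_1$ costs round $1$; propagating the fire from the burned part of $D_k$ to the next $x_i$ costs at most $3$ rounds each (since consecutive dominating vertices are within distance $3$, and Burner's own moves only help), for a total of at most $1 + 3(\ell - 1)$ rounds to get every $x_i$ burned; then $k$ more rounds of spreading burn all of $G$ since $D_k$ is distance-$k$-dominating. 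This gives $\bg(G) \le 1 + 3(\ell-1) + k \le \gamma_k(G) + 3k$ after absorbing constants — I should double-check the off-by-one to confirm it lands exactly at $\gamma_k(G) + 3k$ rather than, say, $\gamma_k(G) + 3k - 2$; if it comes out smaller, so much the better and the stated bound still holds. For the Staller-start game, Burner simply ignores Staller's first move and begins the same strategy on his first turn (round $2$), which shifts everything by one round and yields $\bg'(G) \le \gamma_k(G) + 3k + 1$. Taking the minimum over $k \ge 1$ finishes both bounds; one should also note that by the Continuation Principle / the remark after Theorem~\ref{thm:continuation-principle} we need not fuss over whether Burner's prescribed vertices are already burned, since he may then play arbitrarily.

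The main obstacle I anticipate is making the phrase "consecutive dominating vertices are within distance $3$" precise and verifying that distance $3$ (not $2$, not $k$) is the right constant to traverse a connected graph via its $k$-dominating set — i.e., proving a small lemma of the form: if $G$ is connected and $D$ is a distance-$k$-dominating set, one can order $D = x_1, \dots, x_\ell$ so that for each $i \ge 2$, $d(x_i, \{x_1,\dots,x_{i-1}\}) \le 3$. (Intuition: any vertex is within $k$ of $D$, but to walk the *tree* we only need consecutive $x_i$'s to be "close"; taking a BFS tree and the natural order should give distance at most $2k+1$, which is worse — so the right move is probably to first pass to a connected distance-$k$-dominating *superset* of size not much bigger, or to choose $D$ itself to be contiguous in a spanning tree. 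The factor $3$ suggests the intended argument routes the fire not through $D$ directly but through a path that visits a $\le 3$-neighborhood at each step.) I would pin this down first, since the rest is routine round-counting once the traversal order and the per-step cost of $3$ are in hand.
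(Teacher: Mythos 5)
There is a genuine gap here, and in fact two. First, the ordering lemma you flag as the ``main obstacle'' is false as stated: on a path $P_n$ with $k\ge 2$, the vertices of a minimum distance-$k$-dominating set are pairwise at distance $2k+1$, so no ordering makes consecutive elements lie within distance $3$; as you yourself note, the honest constant from a BFS/spanning-tree traversal is $2k+1$, not $3$. Second, and more fundamentally, even if the distance-$3$ lemma held, your round count is $1+3(\ell-1)+k=3\ell+k-2$, which is \emph{not} at most $\gamma_k(G)+3k=\ell+3k$ unless $\ell\le k+1$; the inequality you write off as ``absorbing constants'' puts the factor $3$ on the wrong variable. The target bound has coefficient $1$ on $\gamma_k(G)$ and $3$ on $k$, so any strategy that spends a constant number of rounds \emph{per dominating vertex} beyond one Burner move each is doomed from the start.

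The paper's proof uses a different and essential idea that your proposal is missing: Burner does not try to reach every vertex of $D_k=\{x_1,\dots,x_\ell\}$ cheaply, he simply plays \emph{fewer} of them. Form the auxiliary graph $H$ on vertex set $D_k$ with $x_ix_j\in E(H)$ iff $d_G(x_i,x_j)\le 2k+1$; connectedness of $G$ together with the distance-$k$-domination property makes $H$ connected, so by the classical Ore bound $\gamma(H)\le \ell/2$. Burner plays the vertices of a minimum dominating set of $H$ on his turns, which occupies at most $2\cdot\frac{\ell}{2}-1=\ell-1$ rounds of game time. Every unplayed $x_i$ is within distance $2k+1$ in $G$ of some played vertex, hence burns within $2k+1$ further rounds, and every vertex of $G$ is within distance $k$ of some $x_i$, giving $3k+1$ additional rounds in total and the bound $\ell+3k$ (with the obvious $+1$ shift for the Staller-start game). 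The ``$3k$'' thus comes from $(2k+1)+k$ rounds of pure fire spreading, not from a per-vertex traversal cost, and the halving $\gamma(H)\le\ell/2$ is exactly what cancels the factor $2$ incurred by Burner only moving every other round. Your proposal would need to be restructured around some such halving device to have a chance of reaching the stated bound.
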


\begin{proof}
    Let $D_k = \{x_1, \ldots, x_\ell\}$ be a distance $k$-dominating set of $G$ where $\ell = \gamma_k(G)$. Let $H$ be a graph with vertex set $D_k$ and $x_i x_j \in E(H)$ if and only if $d_G(x_i, x_j) \leq 2k+1$. Since $G$ is connected and all vertices of $G$ are within distance $k$ of some $x_i$, $H$ must also be connected, thus $\gamma(H) \leq \frac{|V(H)|}{2} = \frac{\ell}{2}$. Let $D = \{x_{i_1}, \ldots, x_{i_r}\}$ be a minimum dominating set of $H$, where $r = \gamma(H)$. 

    Burner's strategy is to play vertices from $D$ (in any order). We claim that after $3k+1$ additional rounds, all vertices are burned.

    Let $v \in V(G)$. Then there exists $x_i \in D_k$ such that $d_G(v, x_i) \leq k$. If $x_i \in D$, then $v$ is burned within the next $k$ rounds after $x_i$ has been played. If $x_i \notin D$, then there exists $x_j \in D$ such that $d_H(x_i, x_j) \le 1$, hence $d_G(x_i, x_j) \le 2k+1$.  Within $2k+1$ rounds after $x_j$ is played, $x_i$ is burned; within another $k$ rounds, $v$ is burned as well.  It follows that $\bg(G) \leq 2 \frac{\ell}{2} - 1 + 3 k + 1 = \ell + 3k = \gamma_k(G) + 3k$. Similarly, $\bg'(G) \leq 2 \frac{\ell}{2} + 3k + 1 = \gamma_k(G) + 3k + 1$.
\end{proof}

By modifying the argument in Case 4 of Proposition \ref{prop:product_gen}, we can obtain a better upper bound if both $G$ and $\overline{G}$ are connected, similarly as was done in~\cite{bonato2016burn}. Since $G$ is connected, Proposition \ref{prop:k-dom-connected} together with the bound $\gamma_k(G) \leq \frac{n}{k+1}$ for all connected graphs on $n \geq k+1$ vertices from \cite{meir1975upper} gives $\bg(G) \leq \frac{n}{k+1} + 3k$ if $n \geq k+1$. Using this bound for $k=2$ and $n \geq 3$, we obtain $\bg(G) \bg(\overline{G}) \leq 3 \left( \frac{n}{3} + 6 \right) = n+18$. This gives the following result.

\begin{corollary}
    \label{cor:product-connected}
    If $G$ and $\overline{G}$ are both connected on $n \geq 3$ vertices, then $\bg(G) \bg(\overline{G}) \leq n+18$.
\end{corollary}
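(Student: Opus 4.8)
The plan is to reuse the structure of the final case of the proof of Proposition~\ref{prop:product_gen} (the case in which $G$ and $\overline{G}$ are both connected), but to replace the bound $\bg(G) \le n/2 + 1$ used there by the sharper estimate coming from distance domination. Concretely, since $G$ is connected on $n \ge 3$ vertices, Proposition~\ref{prop:k-dom-connected} applied with $k = 2$, together with the Meir--Moon bound $\gamma_k(G) \le n/(k+1)$ for connected graphs on $n \ge k+1$ vertices \cite{meir1975upper}, gives
\[
\bg(G) \;\le\; \gamma_2(G) + 6 \;\le\; \frac{n}{3} + 6,
\]
and likewise $\bg(\overline{G}) \le n/3 + 6$. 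It is precisely this improvement that will drop the product bound from $2n$ (Proposition~\ref{prop:product_gen}) down to $n+18$.

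I would then carry out a short case analysis on the diameters. Since the inequality is unchanged under swapping $G$ and $\overline{G}$, assume without loss of generality that $\bg(G) \ge \bg(\overline{G})$. If $\diam(\overline{G}) \le 2$, then Proposition~\ref{prop:radius} gives $\bg(\overline{G}) \le \rad(\overline{G}) + 1 \le 3$, and combining this with $\bg(G) \le n/3 + 6$ yields
\[
\bg(G)\,\bg(\overline{G}) \;\le\; 3\Bigl(\frac{n}{3} + 6\Bigr) \;=\; n + 18 .
\]
If instead $\diam(\overline{G}) \ge 3$, then $\diam(G) \le 3$ — this is the same elementary complement fact already used in Proposition~\ref{prop:product_gen} (a graph of diameter at least $4$ has complement of diameter at most $2$) — so Proposition~\ref{prop:radius} gives $\bg(G) \le \rad(G) + 1 \le 4$, whence $\bg(\overline{G}) \le \bg(G) \le 4$ by the assumption, and $\bg(G)\,\bg(\overline{G}) \le 16 \le n + 18$.

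The two cases are exhaustive, so the corollary follows with essentially no further work; in particular, unlike Proposition~\ref{prop:product_gen}, no separate treatment of small $n$ is needed, because the Meir--Moon bound already holds for every $n \ge 3$ and the value $16$ in the second case is at most $n+18$ for all such $n$. I do not expect any genuine obstacle here: the substance is carried entirely by Propositions~\ref{prop:radius} and~\ref{prop:k-dom-connected} and the classical distance-domination bound, and this corollary is a short wrap-up. The only points deserving a moment's care are checking the hypothesis $n \ge k+1 = 3$ for the Meir--Moon bound (which holds by assumption), justifying the ``without loss of generality'' reduction via self-duality of the inequality under complementation, and recalling the complement-diameter fact $\diam(\overline{G}) \ge 3 \Rightarrow \diam(G) \le 3$.
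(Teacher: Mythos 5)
Your proposal is correct and follows essentially the same route as the paper: the paper's justification is precisely ``modify Case~4 of Proposition~\ref{prop:product_gen} by replacing the bound $\bg(G)\le \frac{n}{2}+1$ with $\bg(G)\le \gamma_2(G)+6\le \frac{n}{3}+6$ from Proposition~\ref{prop:k-dom-connected} and the Meir--Moon bound,'' which is exactly your argument, merely written out with the diameter case analysis made explicit. No gaps; your checks on the hypothesis $n\ge 3$ for the distance-domination bound and on the case $\diam(\overline{G})\ge 3$ (where $16\le n+18$) are the right ones.
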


\begin{proposition}
    \label{prop:sum-S-game}
    If $n \geq 2$, then $4 \leq \bg'(G) + \bg'(\overline{G}) \leq n+2$.
\end{proposition}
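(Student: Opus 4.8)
I would prove the two inequalities separately, mirroring the structure of Proposition~\ref{prop:product_gen} and Proposition~\ref{prop:sum_gen} but working with the Staller-start parameter. The lower bound $\bg'(G)+\bg'(\overline{G})\ge 4$ is immediate: since $n\ge 2$, neither $G$ nor $\overline{G}$ is $K_1$, so by Proposition~\ref{prop:characterize-1-2}(2) each of $\bg'(G)$ and $\bg'(\overline{G})$ is at least $2$.

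For the upper bound $\bg'(G)+\bg'(\overline{G})\le n+2$, the natural approach is to leverage the already-established bound $\bg(G)+\bg(\overline{G})\le n+2$ (Proposition~\ref{prop:sum_gen}) together with Proposition~\ref{prop:F-S-start}, which gives $|\bg(H)-\bg'(H)|\le 1$ for connected $H$. However, one must be careful: Proposition~\ref{prop:F-S-start} applies to connected graphs, and $\bg'$ as used here should be interpreted appropriately when $G$ is disconnected (in which case $\overline{G}$ is connected, and indeed has diameter at most $2$). I would split into cases. First, if $\Delta(G)\ge n-2$ or $\Delta(\overline{G})\ge n-2$, say the former, then Proposition~\ref{prop:characterize-1-2}(3) gives $\bg(G)=2$, hence $\bg'(G)\le 3$ by Proposition~\ref{prop:F-S-start} (or directly: Burner's first move can be a maximum-degree vertex), and $\bg'(\overline{G})\le n-1$ trivially (at least one vertex of $\overline{G}$ is burned in round $1$ by Staller, so after that at most $n-1$ selection rounds remain... actually $\bg'(\overline{G})\le n$ suffices to give $n+3$, which is too weak, so I would instead note $\delta(G)\le 1$ forces some structure on $\overline{G}$). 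This is the step I expect to require the most care.

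The cleaner route for the main case is to assume $\Delta(G)\le n-3$ and $\Delta(\overline{G})\le n-3$ simultaneously, i.e.\ $2\le\delta(G)$ and $2\le\delta(\overline{G})$, so both graphs have minimum degree at least $2$ and $n\ge 5$. Then by Proposition~\ref{prop:Delta}, $\bg'(G)\le n-\Delta(G)$ and $\bg'(\overline{G})\le n-\Delta(\overline{G})$, provided $\Delta\le n-3$ holds for each — which it does by assumption. Adding these, $\bg'(G)+\bg'(\overline{G})\le 2n-\Delta(G)-\Delta(\overline{G})$. Now for any vertex $v$, $d_G(v)+d_{\overline{G}}(v)=n-1$, so taking $v$ to realize $\Delta(G)$ gives $\Delta(\overline{G})\ge d_{\overline{G}}(v)=n-1-\Delta(G)$, whence $\Delta(G)+\Delta(\overline{G})\ge n-1$ and $\bg'(G)+\bg'(\overline{G})\le 2n-(n-1)=n+1\le n+2$. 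This handles the core case in one line.

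It remains to dispatch the boundary cases where one of $\Delta(G),\Delta(\overline{G})$ exceeds $n-3$. Suppose $\Delta(G)\ge n-2$. If $\Delta(G)=n-1$, then $G$ has a universal vertex, so $\overline{G}$ is disconnected with an isolated vertex; then $\bg'(G)=2$ (Staller's round-$1$ move leaves at most... no, one must check: the universal vertex may not be Staller's choice — but after spreading in round $2$, whatever Staller played in round $1$ has made all its neighbors burn, and if $G$ has a universal vertex that is still unburned, Burner burns it and finishes, so $\bg'(G)\le 3$; in fact if $G=K_n$ then $\bg'(G)=2$). Combined with the crude bound $\bg'(\overline{G})\le n$ (valid since at most $n$ vertices, one burned each round) this only gives $n+3$, so I would instead argue that since $\overline G$ has an isolated vertex $w$, Staller must eventually burn $w$ herself, and one can show $\bg'(\overline G)\le \bg'(\overline G - w)+1$ while $\overline{G}-w$ has a universal vertex complement structure, recursing or applying Proposition~\ref{prop:F-S-start}. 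This inductive/structural bookkeeping in the low-degree-complement regime is the genuine obstacle; everything else is routine. I would likely also just defer the small cases (say $n\le 7$, or wherever the clean argument leaves gaps) to a computer check, exactly as Proposition~\ref{prop:product_gen} does, and state that explicitly.
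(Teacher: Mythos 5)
Your lower bound and your main case are both fine, and the main case is handled by a genuinely different argument from the paper's. Where the paper keeps only the hypothesis $\Delta(G)\le n-3$ and bounds $\bg'(\overline{G})$ by analysing an optimal Staller-start game on $\overline{G}$ (if $x_1,\dots,x_k$ are the played vertices, then $x_k$ is non-adjacent in $\overline{G}$ to all earlier $x_i$, so $\Delta(G)\ge k-1$, and Proposition~\ref{prop:Delta} applied to $G$ gives $\bg'(G)+\bg'(\overline{G})\le (n-k+1)+(k+1)=n+2$), you instead assume $\Delta(G)\le n-3$ \emph{and} $\Delta(\overline{G})\le n-3$, apply Proposition~\ref{prop:Delta} to both graphs, and use $\Delta(G)+\Delta(\overline{G})\ge n-1$ to get the (slightly stronger) bound $n+1$. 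That computation is correct and arguably cleaner than the paper's --- but it covers a strictly smaller case, which shifts more weight onto the boundary.

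The genuine gap is in the boundary cases $\Delta(G)\ge n-2$ (equivalently $\Delta(\overline{G})\ge n-2$). These are not ``small cases'': they occur for every $n$ (e.g.\ $G=K_n$, which is exactly where the upper bound is tight, and $G=K_{1,n-1}$), so deferring them to a computer check for $n\le 7$ does not close the argument. You correctly observe that the crude bounds $\bg'(G)\le 3$ and $\bg'(\overline{G})\le n$ only give $n+3$, and you gesture at the needed improvement ($\bg'(\overline{G})\le n-1$ because $\overline{G}$ contains a component forcing at least one vertex to burn by spreading rather than by selection), but you do not carry it out and explicitly flag it as ``the genuine obstacle.'' The paper does exactly this missing work: when $G$ has a universal vertex but $G\ne K_n$ it shows $\bg'(\overline{G})\le n-1$ using the component of order at least $2$ in $\overline{G}$, and when $\Delta(G)=n-2$ it gives an explicit Burner strategy on $\overline{G}$ (ensuring that at most one of the two non-adjacent vertices $v,w$ of $G$ is ever selected) yielding $\bg'(\overline{G})\le\max\{3,n-1\}$, plus hand checks for $n\in\{2,3\}$. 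Until you supply arguments of this kind for $\Delta(G)\in\{n-2,n-1\}$ valid for all $n$, the proof is incomplete. A further small caution: Proposition~\ref{prop:F-S-start} is stated only for connected graphs, so invoking it when $\Delta(G)=n-2$ (where $G$ may be disconnected) requires the direct strategy argument you mention in parentheses rather than that proposition.
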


\begin{proof}
    As $n \geq 2$, $\bg'(G) + \bg'(\overline{G}) \geq 4$. If $G$ is a complete graph, then $\bg'(G) + \bg'(\overline{G}) = n+2$. If $G$ is not complete but it has a universal vertex $u$, then $\bg'(G) \leq 3$ and $\overline{G}$ contains at least one component of order at least 2. If $n=3$, then $G = P_3$ and $\bg'(G) + \bg'(\overline{G}) = 2 + 3 = n+2$. If $n \geq 4$, then $\bg'(\overline{G}) \leq n-1$ as $\overline{G}$ contains at least one component of order at least 2. Thus $\bg'(G) + \bg'(\overline{G}) \leq n+2$.

    If $\Delta(G) = n-2$, then let $v$ be the vertex of maximum degree in $G$ and let $w \in V(G) - N[v]$. Clearly, $\bg'(G) \leq 3$. On $\overline{G}$, if Staller starts on $\{v,w\}$, then Burner plays on $N(v)$, thus at most one of $\{v,w\}$ is played during the game (if it lasts at least 3 rounds), while if Staller starts on $N(v)$, then Burner plays $v$ in round 2, again ensuring that at most one of $\{v,w\}$ is played during the game. Thus $\bg'(\overline{G}) \leq \max\{3, n-1\}$. This means that $\bg'(G) + \bg'(\overline{G}) \leq \max\{6, n+2\}$. For $n \geq 4$ the proof in this case is complete. If $n = 3$, then $G$ is a disjoint union of $K_2$ and $K_1$, $\overline{G}$ is $P_3$, so $\bg'(G) + \bg'(\overline{G}) = 3 + 2 = 5 = n + 2$. If $n = 2$, then $G$ is a disjoint union of two $K_1$s and $\overline{G}$ is $K_2$, thus $\bg'(G) + \bg'(\overline{G}) = 2 + 2 = 4 = n + 2$.
    
    Now suppose that $\Delta(G) \leq n-3$. Let $x_1, \ldots, x_k$ be vertices played in an optimal Staller-start burning game on $\overline{G}$. Then $\bg'(\overline{G}) \leq k+1$. As $x_k$ was a legal move, it cannot be adjacent to $x_i$ for every $i \in [k-1]$, thus $\Delta(G) \geq \deg_{G} x_k \geq k-1$. By Proposition \ref{prop:Delta}, $\bg'(G) \leq n - \Delta(G) \leq n - (k-1)$. Thus $\bg'(G) + \bg'(\overline{G}) \leq n - k + 1 + k+1 = n+2$.
\end{proof}

The upper bound in Proposition \ref{prop:sum-S-game} is tight for example for complete graphs. Note that the lower bound is achieved for $G=K_2$. However, using Proposition \ref{prop:characterize-1-2} and a simple case analysis, one can deduce that if $n \geq 5$, then $\bg'(G) + \bg'(\overline{G}) \geq 6$, which is tight for example for $G = K_{1, n-1}$.

\begin{proposition}
    \label{prop:product-S-game}
    If $n \geq 6$, then $$ 8 \leq \bg'(G) \bg'(\overline{G}) \leq 3n-6.$$
\end{proposition}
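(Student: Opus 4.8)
Since $n\ge 6$, Proposition~\ref{prop:characterize-1-2} gives $\bg'(G),\bg'(\overline G)\ge 2$, and if both are at least $3$ then $\bg'(G)\bg'(\overline G)\ge 9>8$. So we may assume $\bg'(G)=2$. By Proposition~\ref{prop:characterize-1-2}, $G$ is a complete graph on $n$ vertices with a (possibly empty) matching removed, hence $\overline G$ is a disjoint union of $m$ edges and $n-2m$ isolated vertices; it remains to show $\bg'(\overline G)\ge 4$. In any play of the game on $\overline G$, a vertex burns during a spreading phase only if it has a neighbour that burned in the previous round, and each edge-component can contribute at most one such vertex, so the players make at least $n-m\ge n-\lfloor n/2\rfloor=\lceil n/2\rceil$ selections. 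For $n\ge 7$ this already gives $\bg'(\overline G)\ge 4$. For $n=6$, either $m\le 2$ (so again at least $4$ selections are needed) or $\overline G=3K_2$; in the latter case the last of the three edge-components to be played has its second vertex burn one round later, so $\bg'(3K_2)=4$. In all cases $\bg'(G)\bg'(\overline G)\ge 2\cdot 4=8$.

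\textbf{Upper bound: reduction.} We may assume $\bg'(G)\ge\bg'(\overline G)$. Every graph $H$ on $n\ge 1$ vertices satisfies $\bg'(H)\le n$, since a new vertex burns each round until the game ends; thus if $\bg'(\overline G)=2$ then $\bg'(G)\bg'(\overline G)\le 2n\le 3n-6$. So assume $\bg'(G)\ge\bg'(\overline G)\ge 3$, which by Proposition~\ref{prop:characterize-1-2} is equivalent to $\delta(G),\delta(\overline G)\le n-3$, i.e.\ to $\Delta(\overline G)\ge 2$ and $\Delta(G)\ge 2$.

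\textbf{Upper bound: the key claim.} The main point is: \emph{if $n\ge 6$ and $\Delta(G)\ge 2$, then $\bg'(G)\le n-2$.} Call a vertex a \emph{free burn} if it burns during a spreading phase; if a play produces $f$ free burns, then it lasts at most $n-f+1$ rounds, so it suffices to describe a strategy of Burner forcing $f\ge 3$. If $\Delta(G)\ge 3$, Burner plays on his first turn a vertex $v$ of maximum degree non-adjacent to Staller's first move; then at least three neighbours of $v$ are unburned and burn in the next spreading phase. (If every vertex of degree $\ge 3$ is adjacent to Staller's first move one treats the star $K_{1,n-1}$ directly, where $\bg'\le 3$.) If $\Delta(G)=2$, then $G$ is a disjoint union of paths and cycles; if some degree-$2$ vertex lies in a component other than $P_3$, Burner plays such a vertex (with both its neighbours unburned), and a neighbour of a neighbour yields the third free burn. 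The remaining case is $G=\alpha P_3\cup\beta K_2\cup\gamma K_1$ with $\alpha\ge 1$: if $\alpha\ge 2$, or $\alpha=1$ and $\beta\ge 1$, Burner again secures three free burns by playing a $P_3$-centre and then another $P_3$-centre or a $K_2$-vertex; and if $G=P_3\cup\gamma K_1$ with $\gamma=n-3\ge 3$, there are exactly two free burns but the last selection is an isolated vertex, so the game ends without a trailing spreading phase and lasts exactly $n-2$ rounds. This proves the claim. (These last graphs show the bound is attained: $\overline{P_3\cup(n-3)K_1}$ has diameter $2$, so $\bg'(\overline{P_3\cup(n-3)K_1})=3$ and the product equals $3(n-2)=3n-6$.)

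\textbf{Upper bound: conclusion.} If $\bg'(\overline G)=3$, then $\Delta(G)\ge 2$, so the key claim gives $\bg'(G)\le n-2$ and $\bg'(G)\bg'(\overline G)\le 3(n-2)=3n-6$. It remains to treat $\bg'(G)\ge\bg'(\overline G)\ge 4$. If $G$ were disconnected, then $\overline G$ would be connected with $\diam(\overline G)\le 2$, whence $\bg'(\overline G)\le\diam(\overline G)+1\le 3$ by Proposition~\ref{prop:radius}, a contradiction; so $G$, and symmetrically $\overline G$, is connected. Then $\diam(G),\diam(\overline G)\ge 3$ by Proposition~\ref{prop:radius}, and since $\diam(H)\ge 3$ forces $\overline H$ to have a dominating edge and hence $\diam(\overline H)\le 3$, both diameters equal $3$; thus $\bg'(G),\bg'(\overline G)\le 4$ and $\bg'(G)\bg'(\overline G)\le 16\le 3n-6$ for $n\ge 8$. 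The cases $n\in\{6,7\}$ are checked by computer. The main obstacle is the key claim, which requires both a careful Burner strategy forcing three free burns and an exact analysis of the sparse exceptional graphs.
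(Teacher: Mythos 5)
Your lower bound and the overall architecture of your upper bound are sound, and they differ from the paper's: the paper splits on the connectivity structure of $G$ (number of isolated vertices, components of order at most two, all components of order at least $3$ -- using Lemma~\ref{lem:disconnected-3} in the last case), whereas you split on the value of $\bg'(\overline G)$ and funnel the critical case $\bg'(\overline G)=3$ through the single claim that $\Delta(G)\ge 2$ and $n\ge 6$ imply $\bg'(G)\le n-2$. That claim appears to be true and would make for a cleaner proof, but your argument for it has genuine gaps. In the case $\Delta(G)\ge 3$, a maximum-degree vertex non-adjacent to Staller's first move need not exist, and even when it does its neighbours may already lie in the closed neighbourhood of Staller's first move and hence be burned in the spreading phase of round $2$ before Burner selects; your fallback only treats $K_{1,n-1}$, but the failure can occur for graphs that are not stars (e.g.\ $G=K_{1,3}\cup 2K_1$ with Staller opening on a leaf, where the only vertex of degree $3$ is burned before Burner's first turn). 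In the case $\Delta(G)=2$, the step ``a neighbour of a neighbour yields the third free burn'' fails outright for a $C_3$ component (there is no third vertex to free-burn) and fails for a $C_4$ component (Staller can select the antipodal vertex in the round after Burner's move, before it would spread-burn); consequently your residual case analysis $G=\alpha P_3\cup\beta K_2\cup\gamma K_1$ wrongly omits cycles and longer paths from the list of graphs needing the more delicate treatment.

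In each of these omitted cases the claim still holds, but for the reason you only invoke for $P_3\cup(n-3)K_1$: one gets only $f=2$ free burns and must instead argue that Burner can force the game to end with a selection rather than a trailing spreading phase (typically by opening the unique nontrivial component early so that only isolated vertices remain at the end). That mechanism is the real content of the claim and needs to be established in general, not just for one family; as written, two free burns only give $\bg'(G)\le n-f+1=n-1$, which is not enough to reach $3(n-2)=3n-6$. So the key claim requires a substantially more careful Burner strategy (or a reduction to something like the paper's Lemma~\ref{lem:disconnected-3} together with a separate treatment of components of order at most two) before your upper bound is complete. The remaining pieces -- the reduction when $\bg'(\overline G)=2$, the connectivity and diameter argument when both parameters are at least $4$, and the finite computer check -- are correct.
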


\begin{proof}
    We first prove the lower bound. Without loss of generality, assume that $\bg'(G) \geq \bg'(\overline{G})$. If $\bg'(\overline{G}) = 2$, then by Proposition \ref{prop:characterize-1-2} (4.), $G$ is a disjoint union of $K_2$s and $K_1$s. Thus, since $n \geq 6$, $\bg(G) \geq 4$, which gives $\bg'(G) \bg'(\overline{G}) \geq 8$. If $\bg'(\overline{G}) \geq 3$, then $\bg'(G) \bg'(\overline{G}) \geq 9$.

    The rest of the proof is devoted to the upper bound.

    \begin{description}
        \item[Case 1.] $G$ is not connected and has at least $n-2$ isolated vertices.\\
        If $G = \overline{K_n}$, then $\bg'(G) = n$ and $\bg'(\overline{G}) = 2$, so $\bg'(G) \bg'(\overline{G}) = 2n \leq 3n-6$ as $n \geq 6$. If $G$ consists of isolated vertices and exactly one copy of $K_2$, then $\bg'(G) = n-1$ and $\bg'(\overline{G}) = 2$ by Proposition \ref{prop:characterize-1-2} (4.), so $\bg'(G) \bg'(\overline{G}) \leq 2(n-1) \leq 3n-6$ as $n \geq 4$.

        \item[Case 2.] $G$ is not connected, has at most $n-3$ isolated vertices and at least one component of order at most two.\\
        Since $G$ has a component of order at most two, $\diam(\overline{G}) \leq 2$, thus $\bg'(\overline{G}) \leq 3$.
        Since $G$ has at most $n-3$ isolated vertices, it either has a component of order at least 3 or at least two $K_2$s as components.
        \begin{description}
            \item[Case 2a.] $G$ has a component $C$ of order at least 3.\\
            If Burner can play on $C$ first, then he can select a vertex in $C$ with at least two unburned vertices, thus $\bg'(G) \leq n-2$. If Staller plays on $C$ first, then since Burner was not able to play on $C$ first, there is a different component still available for Burner to play on. Thus Staller's move on $C$ burns at least two additional vertices, thus again $\bg'(G) \leq n-2$.
    
            \item[Case 2b.] $G$ has at least two $K_2$s as components.\\
            Whenever an endpoint from $K_2$ is played, the other endpoint is burned in the next round. Since there are at least two $K_2$s as components, we have $\bg'(G) \leq n-2$.
        \end{description}

        In both cases we get $\bg'(G) \bg'(\overline{G}) \leq 3 (n-2) = 3n-6$.

        \item[Case 3.] $G$ is not connected and all of its components are of order at least 3.\\
        As in Case 3 of the proof of Proposition \ref{prop:product_gen} we see that $\diam(\overline{G}) \leq 2$ thus $\bg'(G) \leq 3$. Together with Lemma \ref{lem:disconnected-3} this gives $\bg'(G) \bg'(\overline{G}) \leq 3 (\frac{n}{2} + 1) \leq 3n-6$ as $n \geq 6$.

        \item[Case 4.] $G$ and $\overline{G}$ are both connected.\\
        Without loss of generality we may assume that $\bg'(G) \geq \bg'(\overline{G})$. If $\diam(\overline{G}) \geq 3$, then $\diam(G) \leq 3$, thus $\bg'(G) \leq 4$ by Proposition \ref{prop:radius}. By the assumption this means that $\bg'(\overline{G}) \leq 4$ as well, so $\bg'(G) \bg'(\overline{G}) \leq 16 \leq 3n-6$ for every $n \geq 7$. 

        Otherwise, $\diam(\overline{G}) \leq 2$, thus by Proposition \ref{prop:radius} we have $\bg'(\overline{G}) \leq 3$. Since $G$ is connected and $\rad(G) \leq \floor{\frac{n}{2}}$, Proposition \ref{prop:radius} gives $\bg'(G) \leq \frac{n}{2} + 2$. Thus $\bg'(G) \bg'(\overline{G}) \leq 3 (\frac{n}{2} + 2) = \frac{3n}{2} + 6 \leq 2n \leq 3n - 6$ for every $n \geq 8$. 
        
        If $n\in \{6,7,8\}$ and $G, \overline{G}$ are both connected, it can be checked with the help of a computer that the desired bounds hold. \hfill \qedhere
    \end{description}
\end{proof}

It follows from the proof of Proposition \ref{prop:product-S-game} that the lower bound is attained only if $n = 6$ and one of the graphs is a disjoint copy of three $K_2$s. Otherwise the lower bound is 9 which is in general best possible as for $G = K_{1,n}$ we get $\bg'(G) = 3$ and $\bg'(\overline{G}) = 3$. The upper bound is best possible as shown by $G$ being a disjoint union of $P_3$ and $n-3$ copies of $K_1$ (in this case, $\bg'(G) = n-2$ and $\bg'(\overline{G}) = 3$).

Similarly as before we can obtain a better upper bound if both $G$ and $\overline{G}$ are connected. Combining Proposition \ref{prop:k-dom-connected} with the bound $\gamma_k(G) \leq \frac{n}{k+1}$ for all connected graphs on $n \geq k+1$ vertices from \cite{meir1975upper} gives $\bg'(G) \leq \frac{n}{k+1} + 3k + 1$ if $n \geq k+1$. Using this bound for $k=2$ and $n \geq 3$, we obtain $\bg'(G) \bg'(\overline{G}) \leq 3 \left( \frac{n}{3} + 7 \right) = n+21$. This gives the following result.

\begin{corollary}
    \label{cor:product-connected}
    If $G$ and $\overline{G}$ are both connected on $n \geq 3$ vertices, then $\bg'(G) \bg'(\overline{G}) \leq n+21$.
\end{corollary}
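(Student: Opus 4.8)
The plan is to follow the template of Case~4 in the proof of Proposition~\ref{prop:product-S-game}, but to feed in the sharper bound coming from Proposition~\ref{prop:k-dom-connected} in place of the radius bound on $\bg'(G)$. Since the hypothesis is exactly that both $G$ and $\overline{G}$ are connected, only that final case is relevant. Without loss of generality I would assume $\bg'(G) \ge \bg'(\overline{G})$.

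The first ingredient I would record is one good upper bound on $\bg'(G)$ valid for \emph{every} connected graph on $n \ge 3$ vertices. Combining Proposition~\ref{prop:k-dom-connected} (taken with $k = 2$) with the Meir--Moon-type estimate $\gamma_2(G) \le \tfrac{n}{3}$ for connected graphs on $n \ge 3$ vertices from \cite{meir1975upper} gives $\bg'(G) \le \tfrac{n}{3} + 7$. The same bound of course applies to $\bg'(\overline{G})$.

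Next I would split on $\diam(\overline{G})$, exactly as in Proposition~\ref{prop:product-S-game}. If $\diam(\overline{G}) \ge 3$, then (as already noted in the proof of Proposition~\ref{prop:product_gen}) $\diam(G) \le 3$, so Proposition~\ref{prop:radius} gives $\bg'(G) \le \diam(G) + 1 \le 4$; together with $\bg'(\overline{G}) \le \bg'(G)$ this yields $\bg'(G)\bg'(\overline{G}) \le 16 \le n + 21$. If instead $\diam(\overline{G}) \le 2$, then Proposition~\ref{prop:radius} gives $\bg'(\overline{G}) \le 3$, and combining this with $\bg'(G) \le \tfrac{n}{3} + 7$ yields $\bg'(G)\bg'(\overline{G}) \le 3\bigl(\tfrac{n}{3} + 7\bigr) = n + 21$.

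I do not anticipate a serious obstacle: the argument is a routine strengthening of an already-established case, and the real content is entirely in Proposition~\ref{prop:k-dom-connected} and the Meir--Moon bound, both of which may be assumed. The only point to watch is whether small values of $n$ require separate treatment (as they did in Proposition~\ref{prop:product-S-game}). Here they should not: the estimate $\gamma_k(G) \le n/(k+1)$ already applies for all $n \ge k+1 = 3$, and both final estimates ($16$ and $n+21$) satisfy the claimed inequality for every $n \ge 3$, so unlike the earlier proposition no computer check is needed.
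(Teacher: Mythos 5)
Your proposal is correct and matches the paper's own argument: the paper likewise combines Proposition~\ref{prop:k-dom-connected} with $k=2$ and the bound $\gamma_2(G)\le n/3$ to get $\bg'(G)\le n/3+7$, then reuses the Case~4 diameter dichotomy so that $\bg'(\overline{G})\le 3$ when $\diam(\overline{G})\le 2$ and the product is at most $16$ otherwise. You merely make explicit the $\diam(\overline{G})\ge 3$ subcase and the observation that no small-$n$ computer check is needed, both of which the paper leaves implicit.
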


\section{The Burning Number Conjecture}
\label{sec:upper-bound}

As mentioned in the Introduction, the \textit{burning number conjecture}, posed by Bonato et al. \cite{bonato2016burn}, has attracted a great deal of attention in recent years.  Recall that the burning number conjecture states that for every $n$-vertex connected graph $G$, we have $b(G) \le \ceil{\sqrt{n}}$ or, equivalently, $b(G) \le b(P_n)$.  In this section, we consider the analogous question for the burning game: what is the maximum value of $\bg(G)$ among $n$-vertex connected graphs $G$?

It was shown in \cite{bonato2016burn} that whenever $G'$ is a spanning subgraph of $G$, we have $b(G) \le b(G')$.  Consequently, 
\[b(G) \le \min\{b(T) \colon T \text{ is a spanning tree of } G\}.\]
Bonato et al.~showed that, in fact, the inequality above always holds with equality; this result, known as the \textit{Tree Reduction Theorem}, is a fundamental tool in attacking the burning number conjecture.

\begin{theorem}[Tree Reduction Theorem (\cite{bonato2016burn}, Corollary 2.5)]\label{thm:tree_reduction}
For every connected graph $G$, we have 
\[b(G) = \min\{b(T) \colon T \text{ is a spanning tree of } G\}.\]
\end{theorem}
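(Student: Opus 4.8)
The plan is to prove the two inequalities of the Tree Reduction Theorem separately. One direction is free: every spanning tree $T$ of $G$ is in particular a spanning subgraph of $G$, and (as recalled just above the statement) $b$ is monotone under passing to spanning subgraphs, so $b(G)\le b(T)$ for every spanning tree $T$; hence $b(G)\le\min\{b(T)\colon T\text{ is a spanning tree of }G\}$. The content of the theorem is the reverse inequality, for which I must exhibit a single spanning tree $T$ with $b(T)\le b(G)$.

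Set $k=b(G)$ and fix an optimal burning sequence $v_1,\dots,v_k$ for $G$, so that in the burning process on $G$ driven by these sources every vertex is burned by round $k$; write $r(u)$ for the round in which $u$ burns. The key idea is \emph{not} to build the tree directly, but to first extract a spanning forest, namely the ``ignition forest'' $F$ of this process. Declare a source $v_j$ to be a \emph{root} of $F$ whenever $r(v_j)=j$ (i.e.\ it was not already on fire when its turn came). Every other vertex $u$ was set alight by a burning neighbour, so we may choose a neighbour $p(u)$ with $r(p(u))=r(u)-1$ and add the edge $u\,p(u)$ to $F$. Since $r$ strictly decreases along the parent pointers and each chain terminates at a root, $F$ is an acyclic spanning subgraph of $G$, i.e.\ a spanning forest (with at most $k$ components).

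Next I would verify that the same sequence $v_1,\dots,v_k$ burns $F$ itself within $k$ rounds. Letting $r'(u)$ be the round in which $u$ burns in the process on $F$, an easy induction on $r(u)$ gives $r'(u)\le r(u)$: a root $v_j$ is still selected in round $j$, and any other $u$ is adjacent in $F$ to $p(u)$, which by the inductive hypothesis is burning by round $r(u)-1$, so $u$ burns by round $r(u)$. Hence $b(F)\le k$. Finally, since $G$ is connected and $F$ is an acyclic spanning subgraph, $F$ extends to a spanning tree $T$ of $G$; adding edges cannot increase the burning number ($F$ is a spanning subgraph of $T$), so $b(T)\le b(F)\le k=b(G)$. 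Combined with the first paragraph, the minimum is attained and equals $b(G)$.

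The step requiring the most care is setting up $F$ so that it is genuinely acyclic and spans the whole vertex set — the delicate points being the degenerate vertices (the first source $v_1$, and any source that burns before its turn). Here the round-$1$ convention, that no spreading happens in round $1$, is exactly what is needed: it forces $v_1$ to be the unique vertex with $r(u)=1$, so every non-root genuinely has a strictly-earlier-burning neighbour and the parent map is well defined. Once this bookkeeping is in place, the remaining ingredients (monotonicity under spanning subgraphs, the induction on $r$) are routine.
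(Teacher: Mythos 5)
The paper gives no proof of this statement: it is quoted as Corollary 2.5 of \cite{bonato2016burn}, so there is no internal argument to compare against. Your proof is correct and is essentially the standard argument from that reference -- the easy direction via monotonicity under spanning subgraphs, and the hard direction by extracting the ignition forest of an optimal burning sequence (parents chosen among neighbours burned one round earlier, sources as roots), checking by induction on burning times that the same sequence burns the forest, and completing the forest to a spanning tree.
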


As a consequence of the Tree Reduction Theorem, to determine the maximum burning number of an $n$-vertex connected graph, one needs only determine the maximum burning number of an $n$-vertex tree.  Unfortunately, this result does not extend to the burning game.  Lemma \ref{lem:spanning-subgraph} implies that 
\[\bg(G) \le \min\{\bg(T) \colon T \text{ is a spanning tree of } G\};\]
however, unlike in the burning process, equality need not hold.

\begin{example}\label{ex:tree_reduction_not_true}
For the graph $G$ shown in Figure \ref{fig:tree_reduction_not_true}, we have $\bg(G) = 3$, but for every spanning tree $T$ of $G$, we have $\bg(T) \ge 4$.

Since $\Delta(G) = 5 \leq |V(G)| - 3$ and since every vertex in $G - N[v] = \{x_1, x_2, y_1, y_2, z\}$ is adjacent to all but at most one vertex in $G-N_2[v] = \{y_1, y_2, z\}$, Proposition \ref{prop:characterize-3} yields $\bg(G) = 3$.

Now consider the game on $T$, for an arbitrary spanning tree $T$ of $G$.  We first claim that to have any chance of completing the game within 3 rounds, Burner must burn vertex $v$ in round 1.  If Burner burns $x_1$, $x_2$, $y_1, y_2$, or $z$ in round 1, then Staller can burn $u_1$ in round 2; both $u_2$ and $u_3$ will remain unburned after the spreading phase of round 3, and Burner cannot burn both of them.  If instead Burner burns $w_1$ (respectively, $w_2$), then again Staller burns $u_1$ in round 2; this time, after the spreading phase of round 3, vertices $z$ and $x_2$ (resp. $x_1$) remain unburned, and Burner cannot burn both.  Finally, if Burner burns some $u_i$ in round 1, then Staller burns $z$ in round 2; after the spreading phase of round 3 both $x_1$ and $x_2$ remain unburned, and once again Burner cannot burn both.

Suppose, then, that Burner burns $v$ in round 1.  Because $T$ is a spanning tree of $G$, some edge $x_iy_j$ must not be present in $T$; by symmetry, assume $x_1y_2$ is not present.  In round 2, Staller burns $x_1$.  Now after the spreading phase of round 3, both $y_2$ and $z$ remain unburned, and Burner cannot burn both.
\end{example}
\begin{figure}
\centering
\begin{tikzpicture} [inner sep=0mm, thick,
 smallvertex/.style={draw=black, circle, minimum size=0.011cm},
 vertex/.style={draw=black, fill=black, circle, minimum size=0.2cm},
 xscale=1,yscale=1]

\node (a1) at (-0.75, 1) [vertex] {};
\node at (a1) [left = 0.2cm] {$u_1$};
\node (a2) at (0., 1) [vertex] {};
\node at (a2) [above = 0.2cm] {$u_2$};
\node (a3) at (0.75, 1) [vertex] {};
\node at (a3) [right = 0.2cm] {$u_3$};
\node (b) at (0, 0) [vertex] {};
\node at (b) [right = 0.2cm] {$v$};
\node (c1) at (-0.5, -1) [vertex] {};
\node at (c1) [left = 0.2cm] {$w_1$};
\node (c2) at (0.5, -1) [vertex] {};
\node at (c2) [right = 0.2cm] {$w_2$};
\node (d1) at (-0.5, -2) [vertex] {};
\node at (d1) [left = 0.2cm] {$x_1$};
\node (d2) at (0.5, -2) [vertex] {};
\node at (d2) [right = 0.2cm] {$x_2$};
\node (e1) at (-0.5, -3) [vertex] {};
\node at (e1) [left = 0.2cm] {$y_1$};
\node (e2) at (0.5, -3) [vertex] {};
\node at (e2) [right = 0.2cm] {$y_2$};
\node (f) at (0, -4) [vertex] {};
\node at (f) [below = 0.25cm] {$z$};

\draw (a3) -- (b);
\draw (a1) -- (b) -- (a2);
\draw (b) -- (c1) -- (d1) -- (e1) -- (f);
\draw (b) -- (c2) -- (d2) -- (e2) -- (f);
\draw (d1) -- (e2) -- (e1) -- (d2);
\end{tikzpicture}
    \caption{The graph $G$ in Example \ref{ex:tree_reduction_not_true}}
    \label{fig:tree_reduction_not_true}
\end{figure}
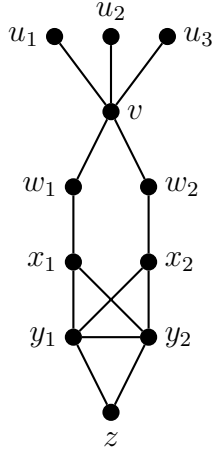
We do not know of any graphs $G$ for which $\min_T \{\bg(T) - \bg(G)\} \ge 2$ (where the minimization is over all spanning trees $T$ of $G$); this would be an interesting topic for future research.\\

For the burning process, Bonato et al. \cite{bonato2016burn} showed that $b(P_n) = \ceil{\sqrt{n}}$; hence, the burning number conjecture postulates that $b(G) \le b(P_n)$ for all $n$-vertex connected graphs $G$.  Thus, when seeking the maximum value of $\bg(G)$ over $n$-vertex connected graphs, it is natural to look first at the burning game on paths.

\begin{theorem}\label{thm:paths}
For all $n \ge 1$, we have 
\[\ceil{\sqrt{2n+1}-1} \le \bg(P_n) \le \ceil{\sqrt{2n+\frac{1}{4}}-\frac{1}{2}}\]
and
\[\ceil{\sqrt{2n+2}-1} \le \bg'(P_n) \le \ceil{\sqrt{2n+\frac{1}{4}}-\frac{1}{2}}.\]
\end{theorem}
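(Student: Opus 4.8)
The plan is to prove the four inequalities separately, after rewriting them in closed form. Note that $\ceil{\sqrt{2n+\frac14}-\frac12}$ is the least $k$ with $n\le\binom{k+1}{2}$, while $\ceil{\sqrt{2n+1}-1}$ (resp.\ $\ceil{\sqrt{2n+2}-1}$) is the greatest $k$ with $n\ge\ceil{k^2/2}$ (resp.\ $n\ge\floor{k^2/2}$). Hence it suffices to show, for every $k\ge 1$: (a) if $n\le\binom{k+1}{2}$ then $\bg(P_n)\le k$ and $\bg'(P_n)\le k$; and (b) if $n\ge\ceil{k^2/2}$ then $\bg(P_n)\ge k$, and if $n\ge\floor{k^2/2}$ then $\bg'(P_n)\ge k$. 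In each case I identify a vertex of $P_n$ played in round $r$ with the interval of $2(k-r)+1$ vertices it can ignite by the end of round $k$, and bound the total coverage.

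\textbf{Upper bounds.} For $\bg(P_n)$ I would let \fastplayer ignore \slowplayer entirely (legitimate, since extra burned vertices only accelerate the fire) and cover $P_n$ by consecutive blocks $B_1,B_2,\dots$ with $|B_j|=2(k-2j+1)+1$ (the last block truncated); a short computation gives $\sum_j|B_j|=\binom{k+1}{2}\ge n$, so the blocks exhaust $V(P_n)$. On his $j$th turn, i.e.\ in round $2j-1$, \fastplayer plays a central vertex of $B_j$; it then has $k-2j+1$ rounds to spread, exactly its eccentricity in $B_j$, so all of $B_j$ burns by round $k$. For $\bg'(P_n)$ the only new point is that \slowplayer moves first: whichever vertex $s$ she burns in round $1$, the ball of radius $k-1$ about $s$ is burned by round $k$ regardless (there is no earlier fire for her to hide behind), and it contains at least $k$ vertices. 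Deleting it leaves one or two subpaths of total size at most $n-k\le\binom{k}{2}$, while \fastplayer's remaining $\floor{k/2}$ moves (rounds $2,4,\dots$) have total reach $\sum_j\bigl(2(k-2j)+1\bigr)=\binom{k}{2}$. If one subpath remains he tiles it as before; if two remain (so $s$ was interior, the burned ball has $2k-1$ vertices, and the subpaths total $n-2k+1\le\binom{k-1}{2}$) he must split his moves between them, which is possible because the reaches $\{2(k-2)+1,2(k-4)+1,\dots\}$ form an arithmetic progression with tiny least term, so their subset sums have gaps bounded by a small absolute constant; the slack $\binom{k}{2}-(n-2k+1)\ge k-1$ then absorbs the rounding loss. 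Finitely many small values of $k$ are checked directly.

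\textbf{Lower bounds.} Here I would give \slowplayer a wasting strategy guaranteeing that each of her moves contributes at most one new vertex to the set burned by the end of round $k-1$. Concretely, if \fastplayer's previous move created a fresh fire at $s$, she plays a vertex at distance $2$ from $s$, whose ball by round $k-1$ protrudes just one vertex past $s$'s (and if \fastplayer wasted his move, she plays at the current frontier of his first source, again gaining only one vertex). In the \slowplayer-start game she additionally squanders her first move at an endpoint, whose ball by round $k-1$ has only $k-1$ vertices. Accounting at the end of round $k-1$: \fastplayer's useful sources burn at most $\binom{k}{2}$ vertices, and each \slowplayer move adds at most one, for a total of at most $\binom{k}{2}+\floor{(k-1)/2}=\ceil{k^2/2}-1$ in the \fastplayer-start game (and the analogous $\floor{k^2/2}-1$ in the \slowplayer-start game). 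Thus if $n$ reaches the stated threshold, some vertex is still unburned after round $k-1$, so the game lasts at least $k$ rounds.

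\textbf{Expected obstacle.} The delicate step is the \slowplayer-start upper bound: showing \fastplayer can always partition his fixed-size balls between the two remnant subpaths. That is where I expect the real work to be, combining the density estimate on subset sums of the reach sequence with the exact value of the slack $\binom{k}{2}-(n-2k+1)$, plus a finite small-$k$ check where that estimate is too weak. A secondary, routine but fiddly, point is to confirm that \slowplayer always has a legal wasting move available while the game is still running (there is always a still-growing fire with unburned room beside it), so that the per-move bound of one new vertex genuinely holds on every turn.
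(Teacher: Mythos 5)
Your overall architecture matches the paper's for three of the four bounds. The Burner-start upper bound via tiling $P_n$ with intervals of lengths $2(k-1)+1, 2(k-3)+1,\dots$ summing to $\binom{k+1}{2}$ is exactly the paper's inductive argument made explicit. Both lower bounds --- \slowplayer wasting each move so that it contributes only one vertex to the set burned by round $k-1$, plus squandering her first move on an endpoint in the \slowplayer-start game --- produce exactly the paper's counts: your ``at most $\ceil{k^2/2}-1$ (resp.\ $\floor{k^2/2}-1$) burned by the end of round $k-1$'' is the paper's ``at most $2j^2-1$ after round $2j-1$ and at most $2j^2+2j$ after round $2j$'' reparameterized. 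The paper realizes the wasting strategy as ``burn an unburned vertex adjacent to a burned one'' and does the bookkeeping by counting components of burned vertices (each component gains at most two vertices per spreading phase, one if it contains an endpoint); your distance-$2$/protrusion formulation is equivalent, but as you note it obliges you to verify on every turn that such a vertex exists and is unburned, which the component invariant handles more cleanly.

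The genuine gap is exactly where you predicted it: the two-subpath case of the \slowplayer-start upper bound. You reduce it to finding a subset of the reach sequence $\{2k-3, 2k-7, \dots\}$ whose sum lies in a window of width $k-1$, and then assert that ``their subset sums have gaps bounded by a small absolute constant.'' That assertion happens to be true for this arithmetic progression (the gaps are at most $4$, so the window suffices once $k \ge 5$), but it is the crux of the hardest case of the theorem and you offer no proof of it; as written, the upper bound on $\bg'(P_n)$ is not established. The paper sidesteps the partition problem entirely with a recursion on $P_s \cup P_t$ under the hypothesis $s+t \le (r-2)(r-1)/2$: if the longer path has at most $2r-3$ vertices, one central move finishes it within $r-1$ rounds and \fastplayer recurses on the other path alone; otherwise he burns the first $2r-3$ vertices of the longer path and recurses on $P_{s-2r+3} \cup P_t$ with the bound $(r-4)(r-3)/2$. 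If you want to keep the subset-sum route, you must actually prove the gap bound (e.g., by showing the $j$-element subset sums fill an interval of a fixed residue class mod $4$ and that consecutive cardinalities overlap) and carry out the small-$k$ verification; otherwise the paper's recursion is the cleaner repair.
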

\begin{proof}
For the upper bound on $\bg(P_n)$, we first claim that in the Burner-start game, Burner can force all vertices of $P_n$ to burn within $r$ rounds whenever $n \le r(r+1)/2$, even if the impact of Staller's moves is completely ignored.  (Note that by the Continuation Principle, ignoring Staller's moves would be suboptimal for Burner and hence cannot reduce the length of the game.)  We will prove this claim through induction on $r$.  When $r=1$ we have $1(1+1)/2 = 1$, and it is clear that Burner can burn a path on one vertex in one round.  When $r=2$ we have $2(2+1)/2 = 3$; within two rounds, Burner can burn a path on two vertices by playing either vertex in round 1, and they can burn a path on three vertices by playing the central vertex in round 1.  Fix $r \ge 3$ and assume that within $r-2$ rounds, Burner can burn any path on $(r-2)(r-1)/2$ or fewer vertices.  Let the vertices of $P_n$ be $v_1, v_2, \dots, v_n$, in order.  If $n < r$, then no matter which vertex Burner burns in round 1, all $n$ vertices will be burned by the end of round $r$.  Otherwise, in round 1, Burner burns vertex $v_r$.  Note that by the end of round $r$, the fire on vertex $v_r$ will have spread to vertices $v_1, \dots, v_{2r-1}$.  Hence, if $n \le 2r-1$, then all $n$ vertices will be burned by the end of round $n$.  Otherwise, Burner can burn the subpath consisting of vertices $v_{2r}, \dots, v_n$ within rounds $3, \dots, r$ by the induction hypothesis, since $n-(2r-1) \le r(r+1)/2 - 2r + 1 = (r^2-3r-2)/2 = (r-2)(r-1)/2$.  This establishes the claim that Burner can burn all of $P_n$ whenever $n \le r(r+1)/2$; the desired upper bound on $\bg(P_n)$ now follows because
\[n \le \frac{r(r+1)}{2} \quad \text{ implies } \quad 2n \le r^2+r = \left (r+\frac{1}{2}\right )^2-\frac{1}{4}, \quad \text{ so } \quad \sqrt{2n+\frac{1}{4}} \le r+\frac{1}{2},\]
so $\sqrt{2n+\frac{1}{4}} - \frac{1}{2}$ rounds suffice to burn $P_n$ in the Burner-start game.  

Similar arguments yield the claimed upper bound on $\bg'(P_n)$.  As in the preceding paragraph, it suffices to show that in the Staller-start game on $P_n$, Burner can ensure that all vertices of $P_n$ burn within $r$ rounds whenever $n \le r(r+1)/2$.  Suppose Staller plays vertex $v_i$ in the first round of the game; we consider two cases.
\begin{itemize}
\item \textbf{Case 1: $i \le r$ or $i \ge n-r+1$.}  By symmetry, we may suppose $i \le r$.  Staller's first move will ensure that vertices $v_1, \dots, v_r$ all burn within $r$ rounds.  Hence, Burner need only worry about burning $v_{r+1}, \dots, v_n$ in the remaining $r-1$ rounds; the arguments used for the Burner-start game show that Burner can do this provided that $n-r \le (r-1)r/2$, i.e. $n \le r(r+1)/2$.

\item \textbf{Case 2: $r+1 \le i \le n-r$.}  In this case, Staller's first move will ensure that vertices $v_{i-r+1}, \dots, v_{i+r-1}$ all burn within $r$ rounds, so Burner need only burn the subpath induced by $v_1, \dots, v_{i-r}$ and the subpath induced by $v_{i+r},\dots, v_n$ in the remaining $r-1$ rounds.  

We claim that in the Burner-start game, Burner can burn the graph $P_s \cup P_t$ within $r-1$ rounds provided that $s+t \le (r-2)(r-1)/2$.  We prove this through induction on $r$.  If $r \le 2$ then the claim is trivial, so suppose $r \ge 3$, and suppose without loss of generality that $s \ge t$.  
\begin{itemize}
\item If $s \le 2r-3$, then with his first move, Burner plays a central vertex of $P_s$.  This causes the entire $P_s$ to burn within $r-1$ rounds.  To finish the game, Burner need only burn $P_t$ within the final $r-3$ rounds; the arguments used above for the Burner-start game show that this is possible provided that $t \le (r-3)(r-2)/2$.  Since $s \le t$, we have 
\[t \le \floor{(s+t)/2} \le \floor{(r-1)(r-2)/4} \le (r-3)(r-2)/2,\]
so Burner can burn $P_s \cup P_t$ within $r-1$ rounds, as claimed. 
\item If instead $s > 2r-3$, then Burner plays the $(r-1)$st vertex of the $P_s$.  In the course of $r-1$ rounds, this move will cause the first $2r-3$ vertices of the $P_s$ to burn, so Burner need only burn $P_{s-2r+3}\cup P_t$ in the final $r-3$ rounds of the game.  By the induction hypothesis, this is possible provided that $s-2r+3+t \le (r-4)(r-3)/2$; this is equivalent to $s+t \le (r^2-7r+12)/2 + 2r-3 = (r^2-3r+6)/2$, which holds by the initial assumption that $s+t \le (r-2)(r-1)/2 = (r^2-3r+2)/2$.
\end{itemize}

It follows that Burner can burn the subpaths of $P_n$ induced by $v_1, \dots, v_{i-r}$ and by $v_{i+r},\dots, v_n$ within $r-1$ rounds provided that $(i-r)+(n-(i+r)+1) \le (r-2)(r-1)/2$, i.e. $n-2r+1 \le (r-2)(r-1)/2$, which is equivalent to the initial assumption that $n \le r(r+1)/2$.
\end{itemize}

Next, we turn to the lower bounds.  For the lower bound on $\bg(P_n)$, we claim that for all $k \ge 1$, Staller can ensure that:
\begin{itemize}
\item By the end of round $2k-1$, at most $2k^2-1$ vertices are burned, and the subgraph induced by the burned vertices contains at most $k$ components; and
\item By the end of round $2k$, at most $2k^2+2k$ vertices are burned, and the subgraph induced by the burned vertices contains at most $k$ components.
\end{itemize}
We proceed through induction on $k$.  Let $v$ be the vertex burned by Burner in round 1.  At the end of round 1, only one vertex -- namely, $v$ -- will have been burned, and there will be only one component of burned vertices.  In the spreading phase of round 2, the neighbors of $v$ will themselves burn, so in total at most three vertices will have been burned.  After this happens, assuming that the game has not yet finished, the graph will contain both burned and unburned vertices; Staller will burn any unburned vertex that is adjacent to a burned vertex.  Note that this does not increase the number of components of burned vertices.  Thus, at the end of round 2, at most four vertices will have been burned, and there will be at most one component of burned vertices.  Hence, the claim holds when $k=1$.

Suppose next that at the end of round $2k$, at most $2k^2+2k$ vertices have been burned and there are at most $k$ components of burned vertices.  In the spreading phase of round $2k+1$, every unburned vertex adjacent to a burned vertex becomes burned; since there are at most $k$ components of burned vertices, at most $2k$ additional vertices get burned, and the number of components of burned vertices cannot increase.  Next, Burner burns an additional vertex; in addition, the number of components of burned vertices could increase by 1.  Thus, at the end of the round, the number of burned vertices is at most $2k^2+2k+2k+1 = 2k^2+4k+1 = 2(k+1)^2-1$ and there are at most $k+1$ components of burned vertices, as claimed.

Similarly, in the spreading phase of round $2k+2$, at most $2(k+1)$ additional vertices burn, after which Staller burns any unburned vertex with a burned neighbor (assuming that the game has not yet finished).  Thus at the end of the round, the number of burned vertices is at most $2(k+1)^2-1+2(k+1)+1 = 2(k+1)^2+2(k+1)$ and there are at most $k+1$ components of burned vertices, as claimed.  Hence, the claim holds for all $k$.

Now suppose that the game finishes after $r$ rounds.  Since all $n$ vertices have been burned, Staller's strategy ensures that $n \le 2k^2-1$ if $r=2k-1$ for some $k$, and $n \le 2k^2+2k$ if $r=2k$.  In the former case, we have $k \ge \sqrt{(n+1)/2}$, hence $r = 2k-1 \ge \sqrt{2n+2}-1$; in the latter case we have $n \le 2((k+1/2)^2-1/4)$, hence $k \ge \sqrt{n/2+1/4}-\frac{1}{2}$, so $r = 2k \ge \sqrt{2n+1}-1$.  In either case $r \ge \sqrt{2n+1}-1$; since $r$ must be an integer, we have $r \ge \ceil{\sqrt{2n+1}-1}$, as claimed.

Similar arguments yield the claimed lower bound on $\bg'(P_n)$.  For the sake of brevity, we omit the details and only explain the changes needed to adapt the argument above to the Staller-start game.  The Staller-start game is a bit worse for Staller since she must create a new component of burned vertices in round 1.  Thus, in the Staller-start game, for all $k \ge 1$, Staller can only ensure that:
\begin{itemize}
\item By the end of round $2k-1$, at most $2k^2-1$ vertices are burned, and the subgraph induced by the burned vertices contains at most $k$ components, at least one of which includes an endpoint of $P_n$; and
\item By the end of round $2k$, at most $2k^2+2k-1$ vertices are burned, and the subgraph induced by the burned vertices contains at most $k+1$ components, at least one of which includes an endpoint of $P_n$.
\end{itemize}
(Note that the stipulation that one component of burned vertices contains an endpoint of $P_n$ arises from Staller playing her first move on $v_1$; note also that this component can only spread fire to one additional vertex -- not two -- in each round.)  If the game finishes after $r$ rounds, then we must have $n \le 2k^2-1$ if $r = 2k-1$ for some $k$ and $n \le 2k^2+2k-1$ if $r=2k$ for some $k$.  The former case implies $r \ge \sqrt{2n+2}-1$; the latter case implies $n \le 2((k+1/2)^2-3/4)$, so $r = 2k \ge 2(\sqrt{n/2+3/4}-1/2) = \sqrt{2n+3}-1$.  In either case, $r \ge \ceil{\sqrt{2n+2}-1}$, as claimed. 
\end{proof}

The strategies for Burner and for Staller in the proof of Theorem \ref{thm:paths} work on $C_n$ as well as on $P_n$.

\begin{theorem}
For all $n \ge 3$, we have 
\[\ceil{\sqrt{2n+1}-1} \le \bg(C_n) \le \ceil{\sqrt{2n+\frac{1}{4}}-\frac{1}{2}}\]
and
\[\ceil{\sqrt{2n+7}-2} \le \bg'(C_n) \le \ceil{\sqrt{2n+\frac{17}{4}}-\frac{3}{2}}.\]
\end{theorem}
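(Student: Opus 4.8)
The plan is to re-run, essentially verbatim, the four strategies from the proof of Theorem~\ref{thm:paths}, exploiting the one structural difference between a cycle and a path: deleting a ball from $C_n$ leaves a \emph{single} path, whereas deleting a ball from the interior of $P_n$ leaves two. For the Burner-start game this change is invisible and the bounds come out with exactly the same constants as for $P_n$; for the Staller-start game it is the source of the modified constants $7$ and $\tfrac{17}{4}$, and recovering these precisely is the real work.

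\textbf{The bounds on $\bg(C_n)$.} For the upper bound, Burner opens with an arbitrary vertex $v$; by the end of round $r$ the fire from $v$ fills the ball of radius $r-1$ about $v$ (that is, $\min\{2r-1,n\}$ vertices), and the vertices it has not yet reached form one path on at most $n-(2r-1)$ vertices, which Burner clears in rounds $3,\dots,r$ by the path strategy. Hence Burner wins in $r$ rounds whenever $n\le r(r+1)/2$, which is the same recursion used for $P_n$ and yields $\bg(C_n)\le\ceil{\sqrt{2n+\tfrac14}-\tfrac12}$. For the matching lower bound, Staller plays on each of her turns an unburned vertex with a burned neighbour (such a vertex exists because $C_n$ is connected and, after round $1$, some vertex is burned). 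The same induction as for $P_n$ shows that at most $2k^2-1$ vertices are burned, in at most $k$ components, by the end of round $2k-1$, and at most $2k^2+2k$ vertices, in at most $k$ components, by the end of round $2k$; merging of burned arcs around the cycle can only help these invariants. The endgame arithmetic is identical to that in Theorem~\ref{thm:paths}, giving $\bg(C_n)\ge\ceil{\sqrt{2n+1}-1}$.

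\textbf{The bounds on $\bg'(C_n)$.} By vertex-transitivity we may assume Staller's first move is a fixed vertex $v$; its fire covers $2r-1$ vertices after $r$ rounds, leaving a single path $Q$ on $n-2r+1$ vertices that $v$'s fire does not reach before round $r+1$. Burner must therefore burn $Q$ using his own turns in rounds $2,4,\dots$, i.e.\ within the $r-1$ rounds $2,3,\dots,r$; the ``ignore Staller'' claim from Theorem~\ref{thm:paths} lets him do so provided $n-2r+1\le(r-1)r/2$. Completing the square in $n\le\tfrac12(r^2+3r-2)$ gives $r\ge\sqrt{2n+\tfrac{17}{4}}-\tfrac32$, hence $\bg'(C_n)\le\ceil{\sqrt{2n+\tfrac{17}{4}}-\tfrac32}$. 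For the lower bound, Staller again plays an arbitrary first vertex and thereafter an unburned vertex with a burned neighbour. Since on a cycle she has no endpoint to lean against, she must create the (two-directionally spreading) initial component already in round $1$, and we must allow Burner to create a fresh burned component on each of his turns (rounds $2,4,\dots$); this shifts the bookkeeping by one round relative to $P_n$. An induction on $k$ shows that Staller can guarantee at most $2k^2+6k+1$ burned vertices, in at most $k+1$ components, after round $2k+1$, and at most $2k^2+4k-2$ burned vertices, in at most $k+1$ components, after round $2k$ (at each step the spreading phase adds at most twice the current number of components, each move adds at most one vertex, only Burner's moves can add a component, and arc merging only decreases the counts). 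If the game lasts $r$ rounds, then $n\le 2k^2+6k+1$ when $r=2k+1$ and $n\le 2k^2+4k-2$ when $r=2k$; both inequalities force $r\ge\sqrt{2n+7}-2$, so $\bg'(C_n)\ge\ceil{\sqrt{2n+7}-2}$.

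The step I expect to be the main obstacle is pinning down the Staller-start constants: one has to carry both the burned-vertex count and the burned-component count through the induction, correctly absorb the one-round phase shift caused by Staller (rather than Burner) opening on a cycle, and check that the cycle-specific effects — wrap-around and merging of burned arcs — never violate the inductive invariants (they do not, since each only lowers the quantities being bounded). The remaining steps are routine adaptations of the $P_n$ arguments.
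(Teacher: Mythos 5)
Your proposal is correct and follows essentially the same route as the paper, which itself only sketches this result by pointing back to Theorem~\ref{thm:paths} and noting the two cycle-specific changes (a single residual path after the first move, and Staller's inability to anchor her first component at an endpoint). You in fact supply the details the paper leaves to the reader, and your bookkeeping checks out: the condition $n-2r+1\le (r-1)r/2$ yields the constant $\tfrac{17}{4}$, and the invariants $2k^2+6k+1$ and $2k^2+4k-2$ yield $\sqrt{2n+7}-2$ as claimed.
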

\begin{proof}
For the Burner-start game, the claimed bounds follow by the same arguments used to bound $\bg(P_n)$ in Theorem \ref{thm:paths}.  For the Staller-start game, the arguments are very similar to those used in Theorem \ref{thm:paths}, with the following changes:
\begin{itemize}
\item For the upper bound, we may assume by symmetry that Staller's first move is on vertex $v_{r}$, which will cause vertices $v_1, \dots, v_{2r-1}$ to burn within $r$ rounds; Burner thus need only worry about burning the subpath induced by $v_{2r}, \dots, v_n$.
\item For the lower bound, unlike on $P_n$, Staller is no longer able to ensure that the component of burned vertices she creates in round 1 can only spread fire in one direction, rather than two.
\end{itemize}
We leave verification of the details to the reader.
\end{proof}

Theorem \ref{thm:paths} shows that $\bg(P_n) = (1+o(1))\sqrt{2n}$.  The burning number conjecture posits that paths have the maximum burning number among connected $n$-vertex graphs, so it is natural to hypothesize that perhaps the same is true in the burning game.  As we next show, this is in fact true, at least in an asymptotic sense: for every connected $n$-vertex graph $G$, we have $\bg(G) \le (1+o(1))\sqrt{2n}$.  We obtain this result as a consequence of recent progress toward the burning number conjecture.  We begin with a lemma about the burning process.  (Note that this lemma deals with the original burning \textit{process}, not the burning \textit{game}.) 

\begin{lemma}\label{lem:burning_conj_helper}
If every connected graph $H$ satisfies $b(H) \le f(\size{V(H)})$, then for every connected graph $G$ we have $b(G^2) \le f(k)+1$, where $k$ denotes the smallest size of a partite set in any spanning tree of $G$.
\end{lemma}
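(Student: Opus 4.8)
The plan is to reduce burning $G^2$ to burning one side of the bipartition of a spanning tree of $G$. Choose a spanning tree $T$ of $G$ realizing the minimum in the definition of $k$, and let $(A,B)$ be its bipartition with $\size{A}=k$, so that $A$ is the smaller part and every edge of $T$ joins $A$ to $B$. (If $\size{V(G)}=1$ the claim is trivial, so assume $\size{V(G)}\ge 2$, whence $k\ge1$.) The first step is to show that the induced subgraph $G^2[A]$ is connected. I would do this by considering the auxiliary graph $H$ on vertex set $A$ in which $a\sim a'$ precisely when $a$ and $a'$ have a common neighbor in $T$. Walking along the unique $T$-path between two given vertices of $A$, which alternates between $A$ and $B$ so that every second vertex lies in $A$ and consecutive such vertices share a $T$-neighbor, shows that $H$ is connected; and if $a,a'$ have a common neighbor in $T\subseteq G$, then $d_G(a,a')\le2$, so $aa'\in E(G^2)$. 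Hence $H$ is a connected spanning subgraph of $G^2[A]$, and so $G^2[A]$ is connected.

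Next, I would apply the hypothesis to the connected graph $G^2[A]$, which has $k$ vertices, to get $b(G^2[A])\le f(k)$. Fix an optimal burning sequence $s_1,\dots,s_\ell$ for $G^2[A]$, so that $\ell=b(G^2[A])\le f(k)$. The claim is that Burner can burn all of $G^2$ within $\ell+1$ rounds by playing $s_1,\dots,s_\ell$ in the first $\ell$ rounds (and an arbitrary vertex in round $\ell+1$, if needed), which gives $b(G^2)\le\ell+1\le f(k)+1$. Two observations justify the claim. First, distances in $G^2[A]$ are never smaller than distances in $G^2$, so after $\ell$ rounds of this process on $G^2$ every vertex of $A$ is already burned, since each $a\in A$ lies in the relevant closed ball around some $s_i$ measured in $G^2[A]$, hence also in $G^2$. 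Second, every vertex of $B$ has a $T$-neighbor, hence a $G$-neighbor, in $A$, so it lies within distance $1$ of $A$ in $G^2$; thus one further spreading phase burns all of $B$. Hence everything is burned by the end of round $\ell+1$.

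I expect the connectivity of $G^2[A]$ to be the only step that requires an idea: one must notice that restricting to the smaller partite set of a spanning tree and joining two vertices whenever they have a common tree-neighbor simultaneously preserves connectivity and produces a subgraph of $G^2$. Once that is in place, the distance comparison between $G^2[A]$ and $G^2$ and the bookkeeping that $B$ burns in one extra round are routine, as is the fact that appending an arbitrary vertex to a burning sequence does no harm.
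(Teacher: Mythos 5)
Your proof is correct and follows essentially the same route as the paper's: both restrict to the smaller partite set $A$ of a spanning tree, observe that the square of the graph induces a connected graph on $A$ (your common-$T$-neighbor argument is exactly the connectivity of $T^2[A]$ used in the paper), apply the hypothesis to that $k$-vertex graph, and then spend one extra round to absorb $B$ since every vertex of $B$ has a tree-neighbor in $A$. The only cosmetic difference is that the paper routes the argument through $T^2$ via $b(G^2)\le b(T^2)$ while you work in $G^2[A]\subseteq G^2$ directly; both are sound.
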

\begin{proof}
Let $G$ be a connected graph, and let $T$ be a spanning tree of $G$ with one partite set having size $k$.  Since $T$ is a spanning subgraph of $G$, it follows that $T^2$ is a spanning subgraph of $G^2$ and hence that $b(G^2) \le b(T^2)$, so it suffices to argue that $b(T^2) \le f(k)+1$.

Let $X$ be a partite set of $T$ with $\size{X} = k$ and let $H$ be the subgraph of $T^2$ induced by $X$.  Note that connectivity of $T$ implies connectivity of $H$.  Moreover, if we have a strategy to burn $H$, then playing the same moves in $T^2$ and waiting one additional turn will ensure that we burn all vertices in $T^2$.  More precisely, let $v_1, v_2, \dots, v_m$ be a burning sequence in $H$; we claim that for any vertex $w$ in $T^2$ (other than $v_1, \dots, v_m$), the sequence $v_1, v_2, \dots, v_m, w$ is a burning sequence in $T^2$.  Two vertices of $X$ are adjacent in $H$ if and only if they are adjacent in $T^2$; hence, for any vertices $x,y \in X$, we have $\dist_{T^2}(x,y) = \dist_H(x,y)$.  Since $v_1, v_2, \dots, v_m$ is a burning sequence in $H$, we have
\[X \subseteq N_{m-1}[v_1] \cup N_{m-2}[v_2] \cup \dots \cup N_0[v_m]\]
(in both $H$ and $T^2$).  By connectedness and bipartiteness of $T$, every vertex of $T$ either belongs to $X$ or is adjacent to some vertex of $X$ in $T$ (and hence also in $T^2$); thus,
\[V(T^2) \subseteq N[X] \subseteq N_m[v_1] \cup N_{m-1}[v_2] \cup \dots \cup N_1[v_m] \subseteq N_m[v_1] \cup N_{m-1}[v_2] \cup \dots \cup N_1[v_m] \cup N_0[w],\]
so $v_1, v_2, \dots, v_m, w$ is a burning sequence in $T^2$. Thus, 
\[b(T^2) \le b(H) + 1 \le f(\size{V(H)}) + 1 = f(k)+1,\]
as claimed.
\end{proof}

Norin and Turcotte \cite{NT24} recently showed that $b(G) = (1+o(1))\sqrt{n}$ for all connected $n$-vertex graphs; this result, together with Lemma \ref{lem:burning_conj_helper}, yields the following general upper bound on $\bg(G)$.

\begin{corollary}\label{cor:burning_conj}
If $G$ is a connected graph on $n$ vertices, then
\[\bg(G) \le (1+o(1))\sqrt{2n}.\]
\end{corollary}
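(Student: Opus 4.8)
The plan is to chain together the three ingredients already in hand: the bound $\bg(G)\le 2b(G^2)-1$ from Proposition~\ref{prop:trivial-bounds}, the reduction $b(G^2)\le f(k)+1$ from Lemma~\ref{lem:burning_conj_helper}, and the Norin--Turcotte estimate $b(H)=(1+o(1))\sqrt{\size{V(H)}}$ for connected graphs $H$.

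First I would fix $\varepsilon>0$ and put the Norin--Turcotte result in a uniform form: there is a constant $C_\varepsilon$ such that every connected graph $H$ satisfies $b(H)\le (1+\varepsilon)\sqrt{\size{V(H)}}+C_\varepsilon$, and I set $f(m)=(1+\varepsilon)\sqrt{m}+C_\varepsilon$. Next, let $T$ be any spanning tree of $G$ and let $k$ be the size of the smaller of its two partite classes, so $k\le n/2$. Lemma~\ref{lem:burning_conj_helper} then gives $b(G^2)\le f(k)+1\le (1+\varepsilon)\sqrt{n/2}+C_\varepsilon+1$, and feeding this into Proposition~\ref{prop:trivial-bounds} yields
\[\bg(G)\le 2b(G^2)-1\le 2(1+\varepsilon)\sqrt{n/2}+2C_\varepsilon+1=(1+\varepsilon)\sqrt{2n}+2C_\varepsilon+1,\]
using $2\sqrt{n/2}=\sqrt{2n}$. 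Since $\varepsilon>0$ was arbitrary and the additive constant is absorbed as $n\to\infty$, this is exactly $\bg(G)\le(1+o(1))\sqrt{2n}$.

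There is essentially no obstacle here beyond careful asymptotic bookkeeping, since the substantive work is done in Lemma~\ref{lem:burning_conj_helper}. The only points to watch are: (i) the smaller partite class of a bipartite graph on $n$ vertices has at most $n/2$ vertices, which is immediate; and (ii) one must apply Norin--Turcotte in the uniform form $f(m)=(1+\varepsilon)\sqrt{m}+C_\varepsilon$ valid for \emph{all} $m$, not only for large $m$, so that Lemma~\ref{lem:burning_conj_helper} can be invoked even when $k$ happens to be small (in which case the resulting bound on $\bg(G)$ is only stronger, so the worst case $k=n/2$ controls the estimate). The factor $\sqrt{2}$ in $\sqrt{2n}$, rather than $\sqrt{n}$, arises from the twofold loss in $\bg(G)\le 2b(G^2)-1$ combined with the twofold gain $k\le n/2$ from passing to a partite class; these two effects leave a net multiplicative factor of $\sqrt{2}$.
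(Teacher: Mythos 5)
Your proposal is correct and follows essentially the same route as the paper: chaining $\bg(G)\le 2b(G^2)-1$ from Proposition~\ref{prop:trivial-bounds}, Lemma~\ref{lem:burning_conj_helper} applied to the smaller partite class of a spanning tree (of size at most $n/2$), and the Norin--Turcotte bound. Your explicit $\varepsilon$/$C_\varepsilon$ bookkeeping just makes precise the asymptotic manipulation that the paper carries out more tersely.
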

\begin{proof}
Let $G$ be a connected $n$-vertex graph.  In any spanning tree of $G$, some partite set has size at most $n/2$; thus Proposition \ref{prop:trivial-bounds}, the Tree Reduction Theorem, Lemma \ref{lem:burning_conj_helper}, and the aforementined result of Norin and Turcotte together yield
\[\bg(G) \le 2b(G^2)-1 \le 2((1+o(1))\sqrt{n/2}+1)-1 \le 2(1+o(1))\sqrt{n/2} = (1+o(1))\sqrt{2n},\]
as claimed.
\end{proof}

Note that Corollary \ref{cor:burning_conj} and Theorem \ref{thm:paths} show that for every connected $n$-vertex graph $G$, we have $\bg(G) = (1+o(1))\bg(P_n)$.  Hence paths are, in an asymptotic sense, graphs with largest possible game burning number.

If the burning number conjecture is in fact true, then an argument similar to that used in Corollary \ref{cor:burning_conj} yields a tighter upper bound on $\bg(G)$.

\begin{corollary}\label{cor:burning_conj_2}
If the burning number conjecture is true, then for every connected graph $G$ on $n$ vertices, we have $\bg(G) \le \floor{\sqrt{2n}}+3$.
\end{corollary}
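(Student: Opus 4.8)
The plan is to follow the same chain of inequalities used for Corollary~\ref{cor:burning_conj}, but to retain exact constants instead of collapsing everything into an $o(1)$ term.

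First I would use Proposition~\ref{prop:trivial-bounds} to reduce the problem to a bound on $b(G^2)$: since $\bg(G) \le 2b(G^2)-1$, it is enough to prove $b(G^2) \le \ceil{\sqrt{n/2}}+1$, for then $\bg(G) \le 2\ceil{\sqrt{n/2}}+1$. To bound $b(G^2)$ I would invoke the Tree Reduction Theorem together with Lemma~\ref{lem:burning_conj_helper}. Under the burning number conjecture every connected graph $H$ satisfies $b(H) \le \ceil{\sqrt{\size{V(H)}}}$, so applying Lemma~\ref{lem:burning_conj_helper} with $f(m)=\ceil{\sqrt m}$ gives $b(G^2) \le \ceil{\sqrt k}+1$, where $k$ is the size of a partite set in some spanning tree of $G$. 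Choosing the smaller partite set of an arbitrary spanning tree yields $k \le n/2$, hence (by monotonicity) $\ceil{\sqrt k} \le \ceil{\sqrt{n/2}}$ and the desired bound on $b(G^2)$ follows.

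The only genuinely computational step is then to reconcile the floor/ceiling expressions in the conclusion. Writing $y=\sqrt{n/2}$ so that $\sqrt{2n}=2y$, it suffices to check the elementary inequality $2\ceil{y} \le \floor{2y}+2$, which follows immediately from $\ceil{y}\le\floor{y}+1$ and $\floor{2y}\ge 2\floor{y}$. Combining everything gives $\bg(G) \le 2\ceil{\sqrt{n/2}}+1 \le \floor{\sqrt{2n}}+2+1 = \floor{\sqrt{2n}}+3$.

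I do not anticipate a real obstacle: the whole argument is a recombination of results already in the paper (Proposition~\ref{prop:trivial-bounds}, the Tree Reduction Theorem, Lemma~\ref{lem:burning_conj_helper}) plus the hypothesized conjecture, and the only place requiring care is the bookkeeping with $\ceil{\cdot}$ and $\floor{\cdot}$ in the last step — which itself reduces to a one-line inequality for real numbers. I would, however, double-check that the additive constant $+3$ cannot be improved to $+2$ by this route: for $n = 2m^2+1$ one has $2\ceil{\sqrt{n/2}}+1 = 2m+3 = \floor{\sqrt{2n}}+3$, so the stated bound is as tight as the method permits.
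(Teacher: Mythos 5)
Your proposal is correct and follows essentially the same route as the paper: Proposition~\ref{prop:trivial-bounds} to reduce to $b(G^2)$, Lemma~\ref{lem:burning_conj_helper} with $f(m)=\lceil\sqrt{m}\rceil$ and the $k\le n/2$ partite-set bound, and then floor/ceiling bookkeeping to reach $\lfloor\sqrt{2n}\rfloor+3$. The only cosmetic difference is that the paper bounds $\lceil\sqrt{n/2}\rceil\le\sqrt{n/2}+1$ and takes the floor of the resulting real bound $\sqrt{2n}+3$ at the end, whereas you verify the equivalent integer inequality $2\lceil y\rceil\le\lfloor 2y\rfloor+2$ directly; your added tightness check at $n=2m^2+1$ is a nice bonus.
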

\begin{proof}
As in the proof of Corollary \ref{cor:burning_conj_2}, in any spanning tree of $G$, some partite set has size at most $n/2$.  Thus, assuming the truth of the burning number conjecture, we have
\[\bg(G) \le 2b(G^2)-1 \le 2(\ceil{\sqrt{n/2}}+1)-1 \le 2(\sqrt{n/2}+2)-1 = 2\sqrt{n/2}+3 = \sqrt{2n}+3;\]
since $\bg(G)$ must be an integer, it follows that $\bg(G) \le \floor{\sqrt{2n}}+3$, as claimed.
\end{proof}

\section{Products}
\label{sec:products}

In this section we explore the burning game on graph products. Recall that general bounds for the burning number of Cartesian, strong and lexicographic products were explored in 
\cite{mitsche2018products}, Cartesian and strong grids were studied first in \cite{mitsche2017probabilistic} and later also in \cite{bonato2021fence}, while the burning of hypercubes has actually been resolved under a different name already in 1992 by Alon \cite{alon1992transmitting}. The cooling number of square grids was given in \cite{bonato2024cool}.

\begin{proposition}
\label{cartesian-product}
Let $G$ and $H$ be arbitrary graphs. Then
$$\max\{\bg(G),\bg(H)\}  \leq \bg(G \boxtimes H) \leq \bg(G \, \Box \, H)$$
and
$$\max\{\bg'(G),\bg'(H)\}  \leq \bg'(G \boxtimes H) \leq \bg'(G \, \Box \, H).$$
\end{proposition}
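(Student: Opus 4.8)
The plan is to establish the two chains of inequalities separately, but with parallel arguments for the Burner-start and Staller-start versions. For the lower bounds, the key observation is that a single layer (say $G^v$ for a fixed $v \in V(H)$) is an isometric subgraph of each of these three products, and moreover that burning the whole product in particular burns the layer $G^v$. More concretely, I would set up a projection/retraction argument: given an optimal strategy for Burner in the burning game on $G \boxtimes H$, Staller — playing in the ``real'' game on $G$ — can imagine a game on $G \boxtimes H$ where she mirrors Burner's real moves in the layer $G^{v_0}$ and plays adversarially elsewhere, forcing the real game on $G$ to last at least as long as the imagined game on $G \boxtimes H$. The cleanest formulation, though, is probably the reverse direction: fix a vertex $v_0 \in V(H)$, and note that in the burning game on $G \boxtimes H$, Burner's moves, once projected onto the $G$-coordinate, together with the spreading in the $G$-direction, burn the layer $G^{v_0}$ no faster than an optimal Burner strategy on $G$ alone could; hence $\bg(G \boxtimes H) \ge \bg(G)$. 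Symmetry in $G$ and $H$ (both $\boxtimes$ and $\Box$ are commutative up to isomorphism) then gives the $\max$.

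For the upper bound $\bg(G \boxtimes H) \le \bg(G \,\Box\, H)$, the natural route is Lemma~\ref{lem:spanning-subgraph}: every edge of $G \,\Box\, H$ is also an edge of $G \boxtimes H$ (the strong product has all the Cartesian edges plus the ``diagonal'' edges), so $G \,\Box\, H$ is a spanning subgraph of $G \boxtimes H$. Lemma~\ref{lem:spanning-subgraph} states precisely that adding edges cannot increase the game burning number, so $\bg(G \boxtimes H) \le \bg(G \,\Box\, H)$ and $\bg'(G \boxtimes H) \le \bg'(G \,\Box\, H)$ follow immediately. This half of the proof is essentially a one-line citation once the spanning-subgraph relationship is pointed out.

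The main obstacle is the lower bound, specifically making the ``restrict the game to a layer'' intuition rigorous as a strategy-stealing argument, since the burning game is adversarial: Staller does not control where Burner plays, and Burner's moves in $G \boxtimes H$ may lie in many different $G$-layers, with fire spreading between layers. The fix is to have Staller (in the real game on $G$) maintain an imagined game on the product in which she simulates Burner's real move $x$ by playing the vertex $(x, v_0)$, and simulates Burner's imagined-product-move $(a,b)$ by playing its projection $a$ in the real game (using the Continuation Principle to ignore legality), while in the imagined game she plays whatever she likes in layers other than $G^{v_0}$ — say, copies of vertices already burned — so that those layers never help burn $G^{v_0}$ faster than $G$ alone. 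One then checks the invariant that the set of burned vertices in the real game on $G$ is contained in the $G$-projection of the burned vertices of the layer $G^{v_0}$ in the imagined game, using that the $G$-layer in $G \boxtimes H$ (or in $G \,\Box\, H$) is isomorphic to $G$ and that spreading within $G^{v_0}$ mirrors spreading in $G$. Since the real game on $G$ cannot finish before the layer $G^{v_0}$ is fully burned in the imagined game, and the imagined game is an instance (with Burner possibly suboptimal, Staller optimal-enough) of the game on $G \boxtimes H$, we obtain $\bg(G) \le \bg(G \boxtimes H)$; by symmetry $\bg(H) \le \bg(G \boxtimes H)$, completing the lower bound. The same argument with the roles of the first player adjusted handles the $\bg'$ statement.
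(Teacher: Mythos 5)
Your proposal follows essentially the same route as the paper's proof: the upper bound is exactly the paper's one-line appeal to Lemma~\ref{lem:spanning-subgraph} (since $G \,\Box\, H$ is a spanning subgraph of $G \boxtimes H$), and the lower bound is the paper's layer-restriction argument, in which Staller plays her optimal strategy for $G$ inside a fixed layer $G^{v_0}$ and Burner's moves in other layers are accounted for by projection. Two points in your write-up of the lower bound need repair, though neither reflects a missing idea. First, you invert which game is ``real'': to prove $\bg(G) \le \bg(G \boxtimes H)$, the real game must be the one on $G \boxtimes H$, with Staller maintaining an imagined game on $G$ in which she plays optimally, translating Burner's real move $(x,y)$ into the move $x$ in the imagined game and her imagined reply $s$ into the real move $(s,v_0)$; the correct invariant is that the burned portion of $G^{v_0}$ in the real game is contained in the copy of the burned set of the imagined game (which also guarantees that $(s,v_0)$ is still unburned, hence legal, in the real game). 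As literally written in your third paragraph (real game on $G$, imagined game on the product), the invariant points the wrong way and does not yield the inequality. Second, the whole projection argument hinges on the fact, stated explicitly in the paper, that in the strong product a vertex $(x,y)$ can influence the layer $G^{v_0}$ only through the closed neighborhood of its projection $(x,v_0)$ (and only after at least as much delay); this is what fails for, say, the Cartesian product alone being replaced by an arbitrary supergraph, so it should be verified rather than summarized as ``spreading within $G^{v_0}$ mirrors spreading in $G$.'' With those fixes your argument coincides with the paper's.
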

\begin{proof}
Let $V(G)=\{u_1, \ldots ,u_m\}$ and $V(H)=\{v_1, \ldots ,v_n\}$ for some positive integers $m$ and $n$. Suppose that the Staller focuses on the $G^{v_1}$-layer which is isomorphic to the graph $G$. An optimal strategy of Staller on $G$ requires the burning game to be finished on $G$ in at least $\bg(G)$ rounds. Now suppose that the Staller sticks to the same strategy on the vertices $G^{v_1}$. When the Burner plays on $G \boxtimes H$, he might choose a vertex either from $G^{v_1}$ or from any other $G$-layer. Now suppose that in a round the Burner burns the vertex $(x,y) \in V(G \boxtimes H)$. If $y=v_1$ or $yv_1 \in E(H)$, then it is the same as he would play the game with Staller on the graph $G$ since $N((x,y)) \cap V(G^{v_1}) = N((x,v_1)) \cap V(G^{v_1})$. However, if the Burner plays on a vertex $(x,y) \in V(G \boxtimes H)$ with $yv_1 \notin E(H)$, that only prolongs the burning of the vertices in the $G^{v_1}$-layer. Hence, at least $\bg(H)$ rounds are required to burn the graph $G \boxtimes H$. Since the strong product is commutative, by symmetry, both players requires at least $\bg(H)$ rounds to burn the graph $G \boxtimes H$. Thus the inequality $\max\{\bg(G),\bg(H)\} \leq \bg(G \boxtimes H)$ follows.

The inequality $\bg(G \boxtimes H) \leq \bg(G \, \Box \, H)$ follows from the fact that the Cartesian product $G \, \Box \, H$ is a spanning subgraph of $G \boxtimes H$. By Lemma~\ref{lem:spanning-subgraph} this result follows immediately.

The strategies of burner and staller are the same in both versions of the game (burner/staller-start), hence, the same result (with the same proof) holds for $\bg'(G \boxtimes H)$.
\end{proof}

We next consider the burning game played on the $n$-dimensional hypercube $Q_n$, i.e. the $n$-fold Cartesian product $K_2 \cart K_2 \cart \dots \cart K_2$.  Alon~\cite{alon1992transmitting} proved that in the original burning process, burning $Q_n$ requires at least $\ceil{\frac{n}{2}}+1$ rounds; 
the burning game on $Q_n$ behaves quite similarly to this.  

\begin{theorem}\label{thm:hypercubes}
\[\bg(Q_n) = \left\{\begin{array}{ll}2, &\text{ if } n \in \{1, 2\};\\\ceil{\frac{n+1}{2}}+1, \quad &\text{ otherwise}\end{array}\right . \text{ and } \quad \bg'(Q_n) = \ceil{\frac{n}{2}}+1.\]
\end{theorem}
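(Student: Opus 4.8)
The plan is to split into the trivial cases $n\le 2$ and the main case $n\ge 3$, and for $n\ge 3$ to establish the four relevant inequalities ($\bg$ and $\bg'$, upper and lower) separately. Throughout I identify $V(Q_n)$ with the subsets of $[n]$, so that Hamming distance is symmetric-difference size and, crucially, every vertex $v$ satisfies $d(v,x)+d(v,\overline x)=n$ for any vertex $x$ and its antipode $\overline x$. For $n\in\{1,2\}$ the graph is $K_2$ or $C_4$, and in both cases Proposition~\ref{prop:characterize-1-2} gives $\bg=\bg'=2$, which matches the stated formulas.

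For the Staller-start game with $n\ge 3$: Burner answers Staller's first move $z$ with its antipode $\overline z$ in round $2$ (legal since $d(z,\overline z)=n\ge 3$), then plays arbitrarily. If the game runs $r$ rounds, the fire from $z$ has engulfed $N_{r-1}[z]$ and the fire from $\overline z$ has engulfed $N_{r-2}[\overline z]$; a vertex outside both would force $d(v,z)\ge r$ and $d(v,\overline z)\ge r-1$, hence $n\ge 2r-1$, which fails once $r\ge\ceil{n/2}+1$. Thus $\bg'(Q_n)\le\ceil{n/2}+1$, while the reverse inequality is immediate from $\bg'(Q_n)\ge b(Q_n)\ge\ceil{n/2}+1$ (Proposition~\ref{prop:trivial-bounds} together with Alon's lower bound \cite{alon1992transmitting}). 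For the Burner-start game with $n\ge 3$, Burner instead plays $\varnothing$ in round $1$ and $[n]$ in round $3$ (if $[n]$ is already burned he plays anything, which only speeds things up). After $r$ rounds the fire from $\varnothing$ covers all vertices of weight $\le r-1$ and the fire from $[n]$, lit no later than round $3$, covers all of weight $\ge n-(r-3)$; these ranges overlap as soon as $r\ge\ceil{(n+3)/2}=\ceil{(n+1)/2}+1$, giving $\bg(Q_n)\le\ceil{(n+1)/2}+1$.

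It remains to prove $\bg(Q_n)\ge\ceil{(n+1)/2}+1$ for $n\ge3$. When $n$ is odd, $\ceil{(n+1)/2}+1=\ceil{n/2}+1=b(Q_n)$, so this again follows from $\bg(Q_n)\ge b(Q_n)\ge\ceil{n/2}+1$. The substantive case is even $n\ge 4$, where I must show Staller can survive through round $n/2+1$, i.e.\ the Burner-start game on $Q_n$ cannot end in $r:=n/2+1$ rounds. I plan to argue by contradiction with the following Staller strategy: on her turn in round $2i$ she plays a vertex $\sigma_i$ at Hamming distance exactly $2$ from the source $s_i$ that Burner played in round $2i-1$ (if every such vertex is already burned she plays arbitrarily; this degenerate case only arises when $s_i$ was essentially swallowed by the existing fire and is handled separately, favorably to Staller). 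In an $r$-round game the source played in round $2i-1$ burns the ball $N_{\rho_i}[s_i]$ with $\rho_i=r-2i+1\in\{n/2,n/2-2,\dots\}$, while $\sigma_i$, lit in round $2i$, burns $N_{\rho_i-1}[\sigma_i]$. Since $d(s_i,\sigma_i)=2$ one checks $N_{\rho_i-1}[\sigma_i]\subseteq N_{\rho_i+1}[s_i]$, and the only vertices it adds beyond $N_{\rho_i}[s_i]$ are those at distance $\rho_i+1$ from $s_i$ lying on a geodesic through $\sigma_i$, of which there are at most $\binom{n-2}{\rho_i-1}$. Peeling off the balls one at a time, the number of vertices burned by the end of round $r$ is therefore at most $\sum_i|N_{\rho_i}[v]|+\sum_i\binom{n-2}{\rho_i-1}$, the radii running over $\{n/2,n/2-2,n/2-4,\dots\}$ (with possibly one fewer term in the second sum). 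A direct binomial estimate shows this is strictly less than $2^n$ for every even $n\ge 4$, contradicting that all of $Q_n$ is burned; hence $\bg(Q_n)\ge n/2+2=\ceil{(n+1)/2}+1$, and with the upper bound this is an equality.

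The main obstacle is exactly this even-$n$ lower bound. The difficulty is that in an $(n/2+1)$-round game the available ball radii form the multiset $\{0,1,\dots,n/2\}$, which is the very covering budget that already suffices for ordinary burning (indeed $b(Q_n)=n/2+1$), so no counting argument indifferent to \emph{who} places the sources can succeed; one must genuinely exploit that Staller owns every other source and can spend each to shadow a Burner source at distance $2$, cutting its contribution from a full ball $N_{\rho_i-1}[\cdot]$ down to a geodesic ``bump'' of size at most $\binom{n-2}{\rho_i-1}$. The two delicate points are verifying $\sum_i|N_{\rho_i}[v]|+\sum_i\binom{n-2}{\rho_i-1}<2^n$ for all even $n$ (a finite check for small $n$ plus a monotonicity/estimate argument for large $n$) and cleanly disposing of the degenerate case where Staller has no unburned vertex at distance $2$ from Burner's last move; the remaining ingredients — the two upper bounds, the odd case, and the cases $n\le 2$ — are routine.
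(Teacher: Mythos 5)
Your cases $n\le 2$, both upper bounds, the lower bound for $\bg'(Q_n)$, and the odd-$n$ lower bound for $\bg(Q_n)$ all match the paper's argument and are fine. The problem is the one step you correctly identified as the crux: the even-$n$ lower bound $\bg(Q_n)\ge n/2+2$. Your proposed counting inequality
\[
\sum_i \bigl\lvert N_{\rho_i}[v]\bigr\rvert \;+\; \sum_i \binom{n-2}{\rho_i-1} \;<\; 2^n,
\qquad \rho_i \in \{n/2,\, n/2-2,\,\dots\},
\]
is simply false for all sufficiently large even $n$, with the first failure already at $n=22$. There $r=12$, the Burner radii are $11,9,7,5,3,1$, and $\sum_i\lvert N_{\rho_i}[v]\rvert = 2449868+1097790+280600+35443+1794+23 = 3865518$, while the bump terms contribute $\binom{20}{10}+\binom{20}{8}+\binom{20}{6}+\binom{20}{4}+\binom{20}{2}+\binom{20}{0} = 354522$, for a total of $4220040 > 4194304 = 2^{22}$. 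The failure is structural, not an artifact of loose estimates: $\lvert N_{n/2-2j}[v]\rvert/2^n$ behaves like $\Phi(-4j/\sqrt{n})$ for the normal CDF $\Phi$, so $\sum_j \lvert N_{n/2-2j}[v]\rvert \sim \tfrac{\sqrt{n}}{4\sqrt{2\pi}}\,2^n$, which alone exceeds $2^n$ once $n$ is large (around $n\approx 100$, and with the bump terms already at $n=22$). In other words, your distance-$2$ shadowing trick successfully neutralizes Staller's \emph{own} balls, but it does nothing about the overlap among Burner's balls, whose total volume is what blows up; any bound of the form ``sum of ball volumes $<2^n$'' is doomed here, which is precisely the tension you flagged but did not resolve.

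The paper's proof takes a structural rather than volumetric route for this case. Staller plays $v_2=(1,1,0,\dots,0)$ in round $2$ and, in round $4$, a vertex with exactly two $1$s among the first three coordinates and exactly two among the last $n-3$ (a counting argument shows such an unburned vertex exists for $n\ge 6$). These choices pin down a sub-hypercube $H\cong Q_{n-3}$, fixed by the first three coordinates, into which all fire from the first four sources enters through at most two ``gateway'' vertices $y$ and $z$. By the Continuation Principle the remaining game on $H$ is at least as long as a burning process on $Q_{n-3}$ with two sources already spent, so Alon's bound $b(Q_{n-3})\ge\ceil{\frac{n-3}{2}}+1$ gives at least $k-2$ further rounds, hence $k+2$ in total. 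If you want to salvage your round count, you would need an argument of this reduction type (or some other way to charge Burner's ball overlaps), not a refinement of the binomial estimate.
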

\begin{proof}
It is easily seen that $\bg(Q_1) = \bg(Q_2) = 2$, so suppose $n \ge 3$.  We will view the vertices of $Q_n$ as $n$-tuples with each coordinate in $\{0,1\}$, where two vertices are adjacent if and only if the corresponding $n$-tuples differ in exactly one coordinate.

We begin with the upper bounds.  Let $r = \ceil{\frac{n+1}{2}}+1$.  For the upper bound on $\bg(Q_n)$, we give a strategy for Burner to burn $Q_n$ in $r$ rounds.  Burner burns vertex $(0,\dots,0)$ in round 1 and vertex $(1,\dots,1)$ in round 3. 
For the remainder of the game, Burner plays arbitrarily.  We claim that by the end of round $r$, all vertices of $Q_n$ will be burned.  Note that vertex $(0,\dots,0)$ was burned in round 1, so by the end of round $r$, all vertices within distance $r-1$ of $(0,\dots,0)$ will be burned.  Likewise, since vertex $(1,\dots,1)$ was burned on or before round 3, all vertices within distance $r-3$ of $(1,\dots,1)$ will be burned.  Consider an arbitrary vertex $v$ in $Q_n$, and let $k$ denote the number of coordinates of $v$ with value 1.  If $k \le r-1$ then $v$ has been burned due to its proximity to $(0,\dots,0)$.  Otherwise, we have $2r = 2(\ceil{(n+1)/2}+1) \ge n+3$; hence $k \ge r \ge n-r+3$, so the distance from $v$ to $(1,\dots,1)$ is at most $r-3$, and thus $v$ has been burned due to its proximity to $(1,\dots,1)$. 

A similar argument suffices to establish the upper bound on $\bg'(Q_n)$.  Let $r = \ceil{\frac{n}{2}}+1$.  Without loss of generality, we may assume that Staller burns vertex $(0,\dots,0)$ in round 1.  In round 2, Burner burns $(1,\dots,1)$.  For the remainder of the game, Burner plays arbitrarily.  By the end of round $r$, all vertices within distance $r-1$ of $(0,\dots,0)$ have been burned, as have all vertices within distance $r-2$ of $(1,\dots,1)$.  Given a vertex $v$ with $k$ coordinates equal to 1, either $k \le r-1$ or $k \ge r \ge n-r+2$; thus either $v$ is within distance $r-1$ of $(0,\dots,0)$ or it is within distance $r-2$ of $(1,\dots,1)$.  In either case, $v$ will be burned by the end of round $r$.

We next consider the lower bounds.  For the Staller-start game, Proposition~\ref{prop:trivial-bounds} and Alon's result in \cite{alon1992transmitting} together yield 
\[\bg'(Q_n) \ge b(Q_n) = \ceil{\frac{n}{2}}+1,\]
as desired.  Similarly, for the Burner-start game, we have 
\[\bg(Q_n) \ge b(Q_n) = \ceil{\frac{n}{2}}+1;\]
when $n$ is odd, this establishes the desired lower bound on $\bg(Q_n)$ (since in this case $\ceil{\frac{n+1}{2}} = \ceil{\frac{n}{2}}$).  Finally, suppose that $n$ is even, and let $n=2k$; we give a strategy for Staller to ensure that the graph cannot be fully burned before round $k+2$.

Without loss of generality, we may suppose that Burner plays vertex $v_1 = (0,0,0,\dots,0)$ in round 1.  In round 2, Staller plays $v_2 = (1,1,0,\dots,0)$.  If $n=4$ then it is easily seen that the graph cannot be fully burned by the end of round 3, 
so suppose $n \ge 6$.  Let $v_3$ denote the vertex played by Burner in round 3.  In round 4, Staller plays any vertex in which exactly two of the first three coordinates are 1s, as are exactly two of the last n-3.  The number of such vertices in $Q_n$ is $3\binom{n-3}{2}$.  To see that some such vertex has not yet been burned, note that there are no such vertices within distance 3 of $v_1$, exactly $\binom{n-3}{2}$ within distance 2 of $v_2$, and at most $n-3$ within distance 1 of $v_3$; hence at least one vertex of this form remains unburned provided that 
\[3\binom{n-3}{2} > \binom{n-3}{2}+n-3,\]   
which holds whenever $n \ge 6$.  For simplicity, suppose that $v_4 = (1, 0, 1, 1, 1, 0, \dots, 0)$; the other cases are similar.  

Define a subgraph $H$ of $Q_n$ as follows: if the first three coordinates of $v_3$ are not all 1, then $H$ consists of all vertices of $Q_n$ in which the first three coordinates are 1; otherwise, $H$ consists of all vertices of $Q_n$ in which the first three coordinates are 0, 1, and 1, in that order.  In either case, note that $H$ is isomorphic to $Q_{n-3}$.  We claim that at the end of round $k+1$, at least one vertex of $H$ will remain unburned.  To see this, let us first take stock of the status of the game at the end of round 4.  All vertices within distance 3 of $v_1$ have burned, as have all vertices within distance 2 of $v_2$, all vertices within distance 1 of $v_3$, and vertex $v_4$.  Now let us restrict our attention to the burned vertices in $H$.  By choice of $H$, no $v_i$ belongs to $H$.  The only burned vertices of $H$ within distance 3 of $v_1$ or distance 2 of $v_2$ are the ``all-zeroes'' vertex of $H$ and (possibly) its neighbors; no vertices of $H$ are within distance 1 of $v_3$ except, perhaps, the vertex of $G$ that differs from $v_3$ only in the first coordinate; and $v_4$ is not in $H$.  In any case, focusing exclusively on $H$, letting $y$ be the ``all-zeroes'' vertex of $H$ and letting $z$ be the unique vertex of $H$ closest to $v_3$, all burned vertices of $H$ belong to $N_H[y] \cup \{z\}$.  Moreover, note that $y$ is the unique vertex of $H$ closest to both $v_1$ and $v_2$, while $z$ is the unique vertex of $H$ closest to $v_3$, and the unique vertex of $H$ closest to $v_4$ is either $(1, 1, 1, 1, 1, 0, \dots, 0)$ or $(0, 1, 1, 1, 1, 0, \dots, 0)$, depending on how $H$ was chosen.

We claim that it will take at least another $\bg(H \vert (N_H[y] \cup \{z\}))$ rounds to burn all vertices of $H$.  As noted above, after the first four rounds of the game, all burned vertices of $H$ belong to $N_H[y] \cup \{z\}$.  Thus, the claim will follow by the Continuation Principle (Theorem \ref{thm:continuation-principle}), provided we can show that the vertices of $G-H$ have no impact on the number of additional rounds needed to burn all of $H$. To see this, first note that for any vertex $v$ of $H$, some shortest path to $v$ from $v_1$ or $v_2$ must pass through $y$, while some shortest path to $v$ from $v_3$ must pass through $z$; consequently, fire that spreads to $v$ from $v_1, v_2$, or $v_3$ can be viewed as spreading through $y$ or $z$.  Fire that spreads to $v$ from $v_4$ can be viewed as spreading through either $(1, 1, 1, 1, 1, 0, \dots, 0)$ or $(0, 1, 1, 1, 1, 0, \dots, 0)$ -- depending on how $H$ was chosen -- but this vertex will burn due to proximity to $v_2$ at the same time it will burn due to proximity from $v_4$, so the fire at $v_4$ does not cause any additional vertices in $H$ to burn beyond those that would already burn on account of $v_2$.  As such, we may safely ``ignore'' all vertices outside of $H$ that were burned within the first four rounds of the game.  Similarly, to minimize the number of rounds needed to burn all of $H$, there is no advantage to playing outside of $H$: instead of playing some vertex $w$ in $G-H$, it would be at least as effective to play the unique vertex of $H$ closest to $w$.  It now follows that even if the players focus solely on $H$, while ignoring the vertices of $G-H$, it will still take at least $\bg(H \vert (N_H[y] \cup \{z\}))$ more rounds to accomplish the task.

Since $H$ is isomorphic to $Q_{n-3}$, the game on $H$ relative to $N_H[y] \cup \{z\}$ can be viewed as a game-in-progress on $Q_{n-3}$, in which two moves have already been played -- the first on $y$ and the second on $z$.  By Alon's result, any burning process on $Q_{n-3}$ must last at least $\ceil{\frac{n-3}{2}}+1$ rounds; thus
\[\bg(H \vert (N_H[y] \cup \{z\})) \ge b(Q_{n-3}) - 2 \ge \ceil{\frac{n-3}{2}}-1 = \ceil{\frac{2k-3}{2}}-1 = k-2,\]
so at least another $k-2$ rounds are needed to burn the rest of $H$.  Since four rounds have already elapsed in the game on $G$, the total length of the game must be at least $k-2+4$, i.e. $k+2$, as claimed.
\end{proof}

\begin{proposition}
\label{corona-product}
Let $G$ and $H$ be connected graphs. Then
$$2b(G^2)-1 \leq \bg(G \circ H) \leq 2b(G^2)$$
and
$$2b(G^2)-1 \leq \bg'(G \circ H) \leq 2b(G^2).$$
\end{proposition}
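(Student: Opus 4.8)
The plan rests on the simple structure of the corona together with a reduction to two extreme choices of~$H$. Identifying $V(G)$ with the ``skeleton'' $\{v_1,\dots,v_n\}$ of $G\circ H$ and writing $H_i$ for the $i$-th blob, one has $d_{G\circ H}(v_i,v_j)=d_G(v_i,v_j)$, every vertex of $H_i$ is at distance $1$ from $v_i$ and at distance $1+d_G(v_i,v_j)$ from $v_j$, and $N^{G^2}_r[x]=N^{G}_{2r}[x]$ (so $N^{G}_s[x]=N^{G^2}_{s/2}[x]$ when $s$ is even and $N^{G}_s[x]\subseteq N^{G^2}_{\ceil{s/2}}[x]$ always). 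Moreover, if $m=|V(H)|$ then $\overline{K_m}\subseteq H\subseteq K_m$ as spanning subgraphs, so $G\circ\overline{K_m}$ is a spanning subgraph of $G\circ H$, which is a spanning subgraph of $G\circ K_m$; hence by Lemma~\ref{lem:spanning-subgraph},
\[\bg(G\circ K_m)\le\bg(G\circ H)\le\bg(G\circ\overline{K_m})\]
and likewise for $\bg'$. Thus it suffices to prove the upper bounds for $G\circ\overline{K_m}$ (the skeleton $G$ with $m$ pendant leaves at each vertex) and the lower bounds for $G\circ K_m$ (in which each $H_i\cup\{v_i\}$ is a clique, so a blob becomes completely burned exactly one round after any of its vertices — in particular $v_i$ — is ignited).

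For the upper bound on $\bg$, put $k=b(G^2)$ and fix an optimal burning sequence $u_1,\dots,u_k$ of $G^2$, so $V(G)=\bigcup_{i=1}^{k}N^{G}_{2(k-i)}[u_i]$. In the Burner-start game on $G\circ\overline{K_m}$, Burner plays the skeleton vertex $u_i$ on his $i$-th turn (round $2i-1$); by the Continuation Principle we may ignore legality. For each $j$, choosing $i$ with $d_G(u_i,v_j)\le 2(k-i)$, the vertex $v_j$ is burned by round $(2i-1)+2(k-i)=2k-1$ and its $m$ leaves one round later, so $\bg(G\circ\overline{K_m})\le 2k$ and hence $\bg(G\circ H)\le 2b(G^2)$. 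In the Staller-start game the same skeleton strategy scheduled in rounds $2,4,\dots,2k$ loses one round; instead one exploits that Staller's forced first move ignites some skeleton vertex by the end of round $2$, supplying Burner with an extra spreading source and an extra spreading round, and absorbing this into a suitably adjusted schedule gives $\bg'(G\circ\overline{K_m})\le 2k$, i.e.\ $\bg'(G\circ H)\le 2b(G^2)$.

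For the lower bounds, work in $G\circ K_m$ and suppose the Burner-start game lasts $t$ rounds. Staller's strategy is to keep her moves inert: on each of her turns she plays an unburned vertex lying in a blob $H_i$ whose centre $v_i$ is already burning. Such a vertex always exists while the game continues, since once a centre is burning, connectivity of $G$ forces the fire to reach a new centre — hence leave its blob unfilled — every round, until all centres are burning, at which point the game ends within one further round. A source played in $H_i$ after $v_i$ is burning can never ignite any centre $v_j$ earlier than the fire from $v_i$ itself, so the ignition time of each $v_j$ depends only on Burner's moves and the spread of fire. Since each blob $H_j$ must be fully burned by round $t$, its centre $v_j$ is ignited by round $t-1$; writing $w_1,w_2,\dots$ for Burner's moves (played in rounds $1,3,5,\dots$), this yields $V(G)=\bigcup_i N^{G}_{t-2i}[\mathrm{skel}(w_i)]$ and hence $V(G^2)=\bigcup_i N^{G^2}_{\ceil{(t-2i)/2}}[\mathrm{skel}(w_i)]$. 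Re-indexed, the radii $\ceil{(t-2i)/2}$ are $0,1,\dots,t/2-1$ when $t$ is even and $1,2,\dots,(t-1)/2$ when $t$ is odd; in the first case they are exactly the radii of a burning sequence of $G^2$ of length $t/2$, and in the second they extend, by adjoining a radius-$0$ source, to one of length $(t+1)/2$. Thus $t/2\ge b(G^2)$ when $t$ is even and $(t+1)/2\ge b(G^2)$ when $t$ is odd; in either case $t\ge 2b(G^2)-1$, so $\bg(G\circ K_m)\ge 2b(G^2)-1$ and hence $\bg(G\circ H)\ge 2b(G^2)-1$. The Staller-start lower bound follows identically, Staller merely opening with an arbitrary inert move.

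The main obstacle is the Staller-start upper bound: the naive ``$G^2$-burning sequence on the skeleton'' schedule is off by one, and recovering that round genuinely requires using the vertex Staller is forced to ignite with her opening move (equivalently, running the analysis relative to that vertex), which has to be set up carefully. A secondary delicate point in the lower bound is verifying that Staller never runs out of inert moves (handled above by connectivity of~$G$). Finally, it is worth recording that the lower bound actually fails for the degenerate pair $G=K_2$, $H=K_1$, where $G\circ H=P_4$ and $\bg(P_4)=2<3=2b(G^2)-1$, so the statement should be read with $|V(G)|\ge 3$, or at least with $(G,H)\ne(K_2,K_1)$.
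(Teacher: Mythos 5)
Your reduction to the two extremal cases $G\circ\overline{K_m}$ and $G\circ K_m$ via Lemma~\ref{lem:spanning-subgraph} is a genuinely different and cleaner organization than the paper's direct argument about general $H$, and your Burner-start upper bound is correct. However, the Staller-start upper bound is a genuine gap: you never exhibit the ``suitably adjusted schedule,'' and in fact none exists, because the inequality $\bg'(G\circ H)\le 2b(G^2)$ is false. Take $G=P_6$ (so $b(G^2)=2$) and $H=K_2$; write $v_1,\dots,v_6$ for the path and $\{a_i,b_i\}$ for the blob at $v_i$. In the Staller-start game the set burned by the end of round $4$ is exactly $N_3[s_1]\cup N_2[x_2]\cup N_1[s_3]\cup\{x_4\}$, where $s_1,s_3$ are Staller's selections and $x_2,x_4$ are Burner's. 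Staller plays $s_1=a_1$, so $N_3[s_1]=\{a_1,b_1,v_1,v_2,v_3,a_2,b_2\}$ misses all of $a_3,b_3,a_6,b_6$. Since $d(a_3,a_6)=5$, no ball $N_2[x_2]$ can contain both $a_3$ and $a_6$, so it misses an entire pair $\{a_j,b_j\}$ with $j\in\{3,6\}$; at her round-$3$ turn at most $|N_2[a_1]|+|N_1[x_2]|\le 9$ of the $18$ vertices are burned, so Staller can legally select $s_3\notin N[a_j]=\{a_j,b_j,v_j\}$, whence $N_1[s_3]\cap\{a_j,b_j\}=\emptyset$ and $x_4$ covers at most one of $a_j,b_j$. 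Hence $\bg'(P_6\circ K_2)\ge 5>4=2b(G^2)$. So the difficulty you flagged as ``the main obstacle'' is not merely a presentational one; the off-by-one round you identified is real and cannot be recovered.

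Your closing observation about the lower bound is also correct: for $G=K_2$, $H=K_1$ one has $G\circ H=P_4$ and $\bg(P_4)=2<3=2b(G^2)-1$, so the stated lower bound fails as well. The failure occurs exactly where your argument breaks: the step ``each blob fully burned by round $t$ forces its centre to be ignited by round $t-1$'' is wrong when the last blob can be finished by a selection rather than by spreading from its centre, which is precisely what happens when $|V(H)|=1$. For $|V(H)|\ge 2$ your inert-move accounting is essentially sound, though you should still verify one edge case: if the only centre newly ignited in a given spreading phase was ignited because Burner selected inside its blob on the previous round, that blob is already fully burned and Staller's prescribed inert move does not exist. In short, your write-up is more careful than the paper's own proof (which asserts both directions for general $H$ and dismisses the Staller-start case as ``the same''), and the two counterexamples above show that the statement itself needs additional hypotheses; but as a proof of the proposition as stated, the Staller-start upper bound remains unproved and unprovable.
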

\begin{proof}
The get the upper bound we consider the optimal strategy for the Burner on the graph $G$. Let us suppose he burns the first source $u_1$. In the second round the fire spreads to the neighborhood $N(u_1)=\left\{u_1^1, \ldots , u_1^{\deg(u_1)}\right\}$ (and to the graph $H$ whose vertices are adjacent to $u_1$). Staller has a chance to either also play on $G$ or on one of the $H$-graphs. If he plays on a vertex in $G$, then he will also burn all the vertices in an $H$-graph in the next round. This move will contribute to at least $|V(H)|$ burned vertices in the forthcoming rounds. The other possibility is that he plays on a vertex in an $H$ graph. If the vertices of this $H$ graph are adjacent to one of the vertices $u_1^i$, $i \in \{1, \ldots , \deg(u_1)\}$, then that chosen source will contribute to at most $|V(H)|-1$ burned vertices in the forthcoming rounds. However, if $H$ is not adjacent to the vertices $u_1^i$, then the chosen source will again contribute to at least $|V(H)|$ burned vertices in the forthcoming rounds. Therefore, the Staller will for sure chose a vertex in an $H$ graph that is adjacent to a preburned vertex. The same argument applies to every Staller's round. If in some point Staller has no such vertex to choose, then the graph $G \circ H$ is already completely burned. In the whole game only the Burner will play on the graph $G$, which means that his chosen sources burned at least the vertices up to distance $2$ since his turn is every second round. Since he was choosing vertices optimally, and together with the Staller's moves, we need at most $2b(G^2)$ rounds to burn the whole graph $G \circ H$.

For the lower bound we consider the following Staller's strategy. Whenever it is her move, she plays on any vertex from any graph $H$ that is still available. If at any given moment she is not able to play such a move, then the graph $G \circ H$ is already completely burned. In this strategy, The Burner is the only one to play on the graph $G$. However, it might happen that the Burner plays at least one move (allegedly his last move) in one of the $H$ graphs, because he has no other choice. Since he plays in odd rounds, together with Staller's rounds, we need at least $2b(G^2)-1$ rounds to burn the graph $G \circ H$.

Similarly than in the proof of Proposition~\ref{cartesian-product}, the strategies of burner and staller are the same in both versions of the game (burner/staller-start). Therefore, the same result (with the same proof) holds for $\bg'(G \circ H)$.
\end{proof}

\begin{proposition}
    \label{lexicographic-product}
    Let $G$ and $H$ be connected graphs. If $H$ has a universal vertex, then $$\bg(G) \leq \bg(G[H]) \leq \bg(G) + 1$$
    and $$\bg'(G) \leq \bg'(G[H]) \leq \bg'(G) + 1.$$
    
    If $H$ does not have a universal vertex, then $$2 b(G^2) \leq \bg(G[H]) \leq 2 b(G^2) + 1$$
    and $$2 b(G^2) \leq \bg'(G[H]) \leq 2 b(G^2) + 1.$$

\end{proposition}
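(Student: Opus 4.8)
I would treat the two cases separately, and in each case prove the lower bound by letting \slowplayer \emph{mirror} an auxiliary process and the upper bound by giving \fastplayer an explicit strategy. Throughout I use the structural fact that in $G[H]$ a burning vertex $(u,v)$ ignites, in the next round, the \emph{whole} layer $\{x\}\times V(H)$ for every $x\in N_G(u)$ (because $(u,v)$ is adjacent to all of it), and spreads inside its own layer exactly as in $H$.

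\textbf{Case $H$ has a universal vertex $w$.} If $H=K_1$ then $G[H]=G$, so assume $|V(H)|\ge 2$. For the lower bounds I would use the projection $\pi\colon V(G[H])\to V(G)$, $\pi(u,v)=u$, together with the identity $\pi\bigl(N_{G[H]}[B]\bigr)=N_G[\pi(B)]$, which follows from the fact above; this lets \slowplayer run her optimal strategy for the game on $G$ inside $G[H]$ (playing her optimal $G$-move inside any still-untouched layer) while keeping the invariant that $\pi$ of the burned set of $G[H]$ is contained in the burned set of the imagined $G$-game. That forces $\bg(G)\le\bg(G[H])$ and $\bg'(G)\le\bg'(G[H])$. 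For the upper bounds, \fastplayer imagines a game on $G$ in which he plays optimally and reads each move $(c,d)$ of \slowplayer as the move $c$; on his own turns he plays $(u_i,w)$ when his optimal $G$-move is $u_i$. Since $w$ is universal, playing $(u_i,w)$ in round $t$ ignites all of $N_G[u_i]\times V(H)$ by round $t+1$, so each layer is fully burned within one round of the corresponding $G$-vertex being burned in the imagined game, giving the bounds $\bg(G)+1$ and $\bg'(G)+1$. The delicate point is the bookkeeping when \slowplayer plays in a layer not yet reached through the distinguished copy $V(G)\times\{w\}$; one has to argue such a move is harmless (it behaves like the corresponding $G$-move, only fully igniting the neighbouring layers one round later) or, if it occurs in the last round, can be absorbed at no extra cost.

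\textbf{Case $H$ has no universal vertex.} Then $|V(H)|\ge 4$ and, crucially, $N_H[v]\subsetneq V(H)$ for every $v$. For the upper bound $\bg(G[H])\le 2b(G^2)+1$ I would fix an optimal burning sequence $x_1,\dots,x_k$ of $G^2$ ($k=b(G^2)$) and let \fastplayer play $(x_i,v_i)$ on his $i$th turn with $v_i$ arbitrary. An induction on rounds shows the layer over $u$ is fully burned by round $2i-1+\max\{2,d(u,x_i)\}$; since every $u\in V(G)$ lies within $G$-distance $2(k-i)$ of some $x_i$, every layer except possibly the one over $x_k$ is burned by round $2k-1$, and the layer over $x_k$ is burned by round $2k+1$ (it needs one extra round to fill up, being fed only by its neighbours). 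The Staller-start bound is identical. For the lower bound $\bg(G[H])\ge 2b(G^2)$, \slowplayer's strategy is, on each of her turns, to play an unburned vertex inside a layer that has already been touched but is not yet fully burned — such a layer exists right after \fastplayer touches a fresh layer, since then it holds only a proper subset $N_H[v_i]$ of its vertices. Such moves never enlarge the set of touched layers, so that set after round $t$ is controlled solely by \fastplayer's selections and the spreading phases; a round-by-round comparison — using that a freshly touched layer stays incompletely burned for one more round — should show that if the game ends in round $r$ then the $G$-coordinates of \fastplayer's moves form a burning sequence of $G^2$ forcing $r\ge 2b(G^2)$.

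\textbf{Where the work is.} The hard part is the lower bound in the second case: I must show \slowplayer can keep finding a ``wasteful'' layer to play in essentially until the end, that this delays \fastplayer by exactly the right amount relative to an optimal burning of $G^2$, and must separately dispatch the (sub-optimal but formally possible) option of \fastplayer burning a layer ``from the inside'' using several of his moves, as well as small/degenerate configurations of $G$ and $H$. The upper bound in the first case also needs care in the case that \slowplayer plays outside the distinguished $G$-layer; everything else is routine given the Continuation Principle and Lemma~\ref{lem:spanning-subgraph}.
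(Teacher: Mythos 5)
Your handling of the first case, and of the upper bound in the second case, follows essentially the same route as the paper (\slowplayer simulates an optimal game on $G$ via the projection for the lower bound; \fastplayer plays copies of the universal vertex, respectively an optimal burning sequence of $G^2$, for the upper bounds); your imagined-game bookkeeping is the right way to make the paper's one-sentence versions of these arguments precise. One small caveat you share with the paper: the upper bound $2b(G^2)+1$ needs $|V(G)|\ge 2$, since the layer over $x_k$ is filled only by its neighbouring layers; for $G=K_1$ we have $G[K_1]\cong H$ and $\bg(H)$ is unbounded.

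The genuine gap is exactly where you locate it, namely the lower bound $2b(G^2)\le \bg(G[H])$ in the second case, and no ``round-by-round comparison'' can close it, because the inequality is false. Take $G=P_3$ with centre $b$ and $H=P_4$ on $v_1v_2v_3v_4$; then $H$ is connected with no universal vertex, $G^2=K_3$, so $b(G^2)=2$ and the claimed lower bound is $4$. But $(b,v_2)$ is adjacent to every vertex of $G[H]$ except $(b,v_4)$, so $\Delta(G[H])=|V(G[H])|-2$ and Proposition~\ref{prop:characterize-1-2} gives $\bg(G[H])=2$; likewise $\diam(G[H])=2$ gives $\bg'(G[H])\le 3$. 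The structural fact you isolate at the outset -- a single burning vertex in the layer over $u$ ignites the \emph{entire} layer over every $G$-neighbour of $u$ in the next round -- is precisely what defeats \slowplayer: the set of fully burned layers grows at unit speed in $G$ from \fastplayer's first move alone, so the game ends within about $\rad(G)+2$ rounds no matter where \slowplayer plays, and this can be far below $2b(G^2)$. Concretely, your intermediate claim that ``a freshly touched layer stays incompletely burned for one more round'' fails as soon as some $G$-neighbour of that layer is already touched, and the intended conclusion that \fastplayer's $G$-coordinates form a burning sequence of $G^2$ of length $b(G^2)$ does not follow (in the example above \fastplayer wins after a single move). For what it is worth, the paper's own proof of this step is the single unsupported sentence that \slowplayer replays in \fastplayer's layer, ``thus the game lasts at least $2b(G^2)$ moves,'' and it founders on the same example; the defect lies in the stated inequality, not merely in your attempt to prove it.
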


\begin{proof}
    First suppose that $H$ has a universal vertex $u$. Staller follows the optimal strategy from the burning game on $G$, playing on a non-universal vertex in $H$, thus the game lasts at least $\bg(G)$ moves. Burner's strategy is to follow the optimal strategy from the burning game on $G$, playing on a copy of the universal vertex in $H$. Thus Staller's moves will also be on copies of $H$ in which no other vertex has been played yet, and by the time it is her turn next, this copy will be entirely burned. Thus the game ends in at most $\bg(G) + 1$ moves (there might be one more time step needed to burn the rest of the last copy of $H$ that has been played).

    Now suppose that $H$ does not have a universal vertex. Thus $\bg(H) \geq 2$. Staller's strategy is to always play in the same copy of $H$ as Burner played on, thus the game lasts at least $2 b(G^2)$ moves. Burner's strategy is to fix a sequence of moves $x_1, x_2, \ldots$ that achieves $b(G^2)$ in $G^2$. Burner's strategy is to play the optimal first move in the burning game on $H$ in each copy of $H$ corresponding to vertices $x_1, x_2, \ldots$. 
    In the end, one more time step might be needed to entirely burn the last copy of $H$ that has been played on. But this strategy ensures that the game ends in at most $2 b(G^2) + 1$ moves.

   Again, the strategies of burner and staller are the same in both versions of the game (burner/staller-start), as was in propositions~\ref{cartesian-product} and~\ref{lexicographic-product}. Therefore, the same result (with the same proof) holds for $\bg'(G[H])$.
\end{proof}

It is not surprising that the inequalities obtained in propositions~\ref{cartesian-product},~\ref{corona-product},~\ref{lexicographic-product}, and the equalities obtained in Theorem~\ref{thm:hypercubes}, are (almost) the same for $\bg$ and $\bg'$ of the graphs. Namely, Proposition~\ref{prop:F-S-start} shows that the difference between $\bg$ and $\bg'$ of any connected graph is at most one. This is also the reason why we omitted whole proofs in the case of $\bg'$ for the products.\\

\section{Acknowledgement}
The work was initiated at the 2nd Workshop on Games on Graphs on Rogla in June 2024.
N.C.\ acknowledges partial support by the Slovenian Research and Innovation Agency (I0-0035, research program P1-0404, and research projects N1-0210, N1-0370, J1-3003, and J1-4008).
V.I.\ acknowledges the financial support from the Slovenian Research Agency (Z1-50003, P1-0297, N1-0218, N1-0285, N1-0355) and the European Union (ERC, KARST, 101071836). M.J.\ acknowledges the financial support of the Slovenian Research and Innovation Agency (research core funding No.\ P1-0297 and projects N1-0285, J1-3002, and J1-4008).
M.M.\ acknowledges the partial financial support of the Provincial Secretariat for Higher Education and Scientific Research, Province of Vojvodina (Grant No.~142-451-2686/2021) and of Ministry of Science, Technological Development and Innovation of Republic of Serbia (Grants 451-03-66/2024-03/200125 \& 451-03-65/2024-03/200125).

\bibliographystyle{plain}
\bibliography{references}
\end{document}